%% file: main.tex
\documentclass{amsart}
\usepackage[utf8]{inputenc}
\usepackage[algonl,boxed,norelsize,lined]{algorithm2e}
\usepackage{amscd,amsmath,amsthm,amssymb,amsfonts}
\usepackage{fullpage}
\usepackage{comment}
\usepackage[dvipsnames]{xcolor}
\usepackage[all]{xy}
\usepackage{datetime}
\usepackage{color}
\usepackage[english]{babel}
\usepackage[T1]{fontenc}
\usepackage{hyperref}
\usepackage{enumitem}
\usepackage{theoremref}
\usepackage{enumitem}
\usepackage{tikz-cd}
\usepackage{adjustbox}
\usepackage{tikz-3dplot} 
\usetikzlibrary{arrows}
\usetikzlibrary{3d}
\usepackage{booktabs}

\usepackage[a4paper,top=2.5cm,bottom=2cm,left=2.5cm,right=2.5cm,marginparwidth=1.75cm]{geometry}

\usepackage{amsmath}
\usepackage{amssymb}
\usepackage{amsthm}
\usepackage{verbatim}

\usepackage{tikz}
\usetikzlibrary{decorations.pathreplacing}
\usetikzlibrary{calc}
\usetikzlibrary{patterns}

\newlength{\bibitemsep}
\newlength{\bibparskip}
\let\oldthebibliography\thebibliography
\renewcommand\thebibliography[1]{%
  \oldthebibliography{#1}%
  \setlength{\parskip}{\bibitemsep}%
  \setlength{\itemsep}{\bibparskip}%
}

\newtheorem{Theorem}{Theorem}[section]
\newtheorem*{Theorem*}{Theorem}
\newtheorem*{Corollary*}{Corollary}

\newtheorem{Lemma}[Theorem]{Lemma}
\newtheorem{Corollary}[Theorem]{Corollary}
\newtheorem{Proposition}[Theorem]{Proposition}
\newtheorem{Remark}[Theorem]{Remark}
\newtheorem{Definition}[Theorem]{Definition}
\newtheorem{Example}[Theorem]{Example}

\numberwithin{equation}{section}

\DeclareMathOperator{\link}{link}

\DeclareMathOperator{\conv}{conv}

\DeclareMathOperator{\Star}{Star}

\DeclareMathOperator{\cone}{cone}

\definecolor{caribbeangreen}{rgb}{0.0, 0.8, 0.6}
\definecolor{capri}{rgb}{0.0, 0.75, 1.0}

\title{A combinatorial interpolation between the hypercube and the associahedron}
\author{Lara Bossinger} 
\email{lara@im.unam.mx}
\address{Universidad Nacional Autónoma de México, Instituto de Matemáticas, Unidad Oaxaca, León 2, centro histórico, 68000 Oaxaca de Juárez, México}
\author{Ömer Gürdo\u{g}an}
\email{o.c.gurdogan@soton.ac.uk}
\address{School of Physics and Astronomy, University of Southampton, Highfield, Southampton, SO17 1BJ, United Kingdom.}

\include{pictures}

\begin{document}

\maketitle

\begin{abstract}
We study a family of polytopes interpolating between the hypercube and the associahedron. 
We give an explicit description of their normal fans and describe their combinatorics in terms of chord diagrams associated to their vertices. This yields interesting number sequences generalizing Pell numbers and asymptotic to Catalan numbers.
We describe the transition between the normal fans in terms of star subdivisions and relate them to polytopes associated to Dyck paths as in Calvo Cortes and Frost's work \cite{CalvoCortesFrost2026} and hypergraphic polytopes defined by Benedetti \emph{et al.} in \cite{BenedettiBergeronMachacek2019}. 
Furthermore, all polytopes in our families yield binary geometries.
\end{abstract}


\section{Introduction}

The associahedron is one of the most prominent examples of a polytope whose geometry is governed by a simple combinatorial structure \cite{Stasheff1963I,Stasheff1963II}. 
Its vertices are indexed by triangulations of a polygon, while its faces correspond to collections of pairwise non-crossing diagonals \cite{Lee1989}. 
This description is reflected in the normal fan of the associahedron of dimension $n$, whose rays can be identified with chords of an $(n+3)$-gon. 
The cones of this fan are indexed by collections of non-crossing chords. The associahedron appears in a variety of settings, including cluster algebras, moduli spaces, positive geometry, and the study of scattering amplitudes \cite{FominZelevinsky2003,ArkaniHamedBaiLam2017,ArkaniHamedBaiHeYan2018}.

In this paper we study a family of polytopes that interpolates between the hypercube and the associahedron. 
The starting point for our construction is the pellytope introduced by the first author, Telek and Tillmann-Morris in \cite{Pellytopes}. 
Its normal fan is a coarsening of the normal fan of the associahedron. 
This raises a natural question: can one construct a family of polytopes interpolating between these two types of objects?
And is there a combinatorial description analogous to the triangulations associated with vertices of the associahedron?

We answer these questions by introducing a family of polytopes $\mathcal{P}_{d,n}$. We refer to them as \emph{Catalan pellytopes} as they generalise pellytopes all the way to associahedra following Catalan combinatorics.
Let $n\geq 1$ and consider the polynomials
\begin{eqnarray}\label{eq:qkj}
    q_{k,j}=1+y_j+y_jy_{j+1}+\cdots+y_jy_{j+1}\cdots y_{j+k-1},
\end{eqnarray}
where $1\leq k\leq n$ and $1\leq j\leq n-k+1$. Let $Q_{k,j}:=\operatorname{Newt}(q_{k,j})\subseteq \mathbb{R}^n$ be their Newton polytopes. 
For $1\leq d\leq n$, we define the {\bf Catalan pellytope}
\begin{eqnarray}\label{eq:def Catalan pellytope}
\mathcal P_{d,n}
:= \operatorname{Newt}
\left( \prod_{k=1}^{d} \prod_{j=1}^{n-k+1} q_{k,j} \right) = \sum_{k=1}^{d} \sum_{j=1}^{n-k+1} Q_{k,j}.    
\end{eqnarray}
where the right hand side is the Minkowski sum of the corresponding Newton polytopes.
The polytopes we study are examples of polytopes associated to Dyck paths \cite{CalvoCortesFrost2026}, which themselves are examples of  hypergraphic polytopes--Minkowski sums of faces of a simplex \cite{BenedettiBergeronMachacek2019}. 
This places them in the class of generalized permutohedra, which also contains the classical associahedron \cite{ArdilaBenedettiDoker2010,Postnikov_associahedra}.

\begin{figure}
  \centering
  \picpone\hfill
  \picptwo\hfill
  \picpthree
    \caption{The Catalan pellytopes $\mathcal P_{1,3},\mathcal P_{2,3}$ and $\mathcal P_{3,3}$ with vertices labeled by chord diagrams by Theorem~\ref{thm:chord fan}. 
    The bottom front chord diagram is the triangulation with all chords incident to vertex $1$ of the hexagon.} 
    \label{fig:placeholder}
\end{figure}

Special cases of this construction recover several familiar polytopes. For $d=1$, each $Q_{1,j}$ is a line segment and $\mathcal P_{1,n}$ is the $n$-dimensional hypercube. 
For $d=2$, $\mathcal P_{2,n}$ is the pellytope introduced in \cite{Pellytopes}. 
At the other extreme, $\mathcal P_{n,n}$ is unimodularly equivalent to the Loday realization of the associahedron $\operatorname{Ass}_n$, see Corollary~\ref{cor:ass}. 

The parameter $d$ has a direct interpretation in the normal fan. Denote the inner normal fan of $\mathcal P_{d,n}$ by $\Sigma_{d,n}$.
We show in Proposition~\ref{cor:rays} that the rays of $\Sigma_{d,n}$ are generated by
\[
\pm e_i
\qquad\text{and}\qquad
e_i-e_{i+k} \text{ for }  1\leq k\leq d-1.
\]
These vectors can be naturally identified with a collection of chords in an $(n+3)$-gon: 
\begin{eqnarray}\label{eq:chords and rays}
    -e_a \leftrightarrow \overline{1(a+2)}, \quad e_b \leftrightarrow \overline{(b+1)(n+3)}, \quad e_i-e_j \leftrightarrow \overline{(i+1)(j+2)}
\end{eqnarray}
for $1\le a,b\le n$, $1\le i\le n-1$ and $i+1\le j\le n$.
This identification is induced by interpreting the ray generators as $g$-vectors of cluster variables associated to a linear orientation of a type $\mathtt A$ Dynkin diagram \cite{FominZelevinsky-I,FominZelevinsky-IV}, see \eqref{eq:chords and type A quiver}.
We denote the collection of chords corresponding to rays of $\Sigma_{d,n}$ by $\mathcal{C}h_d$ and refer to its elements as \emph{$d$-chords}. 
Two $d$-chords are {\it $d$-compatible}, if either they do not cross, or they are of form $\overline{1j},\overline{i(n+3)}$ and $j-i\ge d+1$.
A maximal collection of $d$-compatible $d$-chords is called a \emph{$d$-admissible chord diagram}, see Definition~\ref{eq:def d-chords} and Figure~\ref{fig:exp chord diag}.
In particular, for $d=n$, $d$-chords are the usual diagonals of the $(n+3)$-gon and $d$-admissible chord diagrams are precisely triangulations.

Our main result identifies chord diagrams with the maximal cones of the normal fan of the Catalan pellytope.

\begin{Theorem*}[Theorem~\ref{thm:chord fan} and Proposition~\ref{prop:stars}]
Let $1\leq d\leq n$. The inner normal fan $\Sigma_{d,n}$ of the Catalan pellytope $\mathcal P_{d,n}$ is the \emph{chord diagram fan} $\mathcal{C}_{d,n}$ whose maximal cones are indexed by $d$-admissible chord diagrams. 
More precisely:
\begin{enumerate}
    \item A collection of rays of $\Sigma_{d,n}$ spans a cone if and only if the corresponding $d$-chords are pairwise $d$-compatible. In particular, the maximal cones of $\Sigma_{d,n}$ are in bijection with $d$-admissible chord diagrams.
    \item The fan $\Sigma_{d,n}$ is smooth, in particular simplicial, and flag.
\end{enumerate}
Moreover, $\Sigma_{d,n}$ is obtained from $\Sigma_{d-1,n}$ by a sequence of star subdivisions along two-dimensional cones corresponding to crossing chords 
\[
\overline{1j}\text{ and }\overline{i(n+3)}, \text{ with } j-i=d.
\]
Each star subdivision introduces a new ray corresponding to the $d$-chord $\overline{ij}$.
\end{Theorem*}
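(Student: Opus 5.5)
Since $\mathcal P_{d,n}$ is a Minkowski sum, I would compute its inner normal fan $\Sigma_{d,n}$ as the common refinement of the normal fans of the summands $Q_{k,j}$, $k\le d$. The induction on $d$ then comes from the identity $\Sigma_{d,n}=\Sigma_{d-1,n}\wedge\bigwedge_{j}\mathcal N(Q_{d,j})$.

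\textbf{Step 1: normal fan of a single summand.} The polytope $Q_{k,j}$ is a $k$-simplex with vertices $0, e_j, e_j+e_{j+1},\dots,e_j+\cdots+e_{j+k-1}$. For $w\in\mathbb R^n$, the face minimizing $\langle w,\cdot\rangle$ is governed by the partial sums $0,w_j,w_j+w_{j+1},\dots$. So the walls of $\mathcal N(Q_{k,j})$ are the hyperplanes where two of these partial sums are equal and minimal.

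\textbf{Step 2: induction.} For $d=1$ the fan $\Sigma_{1,n}$ is the coordinate-orthant fan, which is smooth and flag. Its rays are $\pm e_i$, the $1$-chords $\overline{1(a+2)}$ and $\overline{(b+1)(n+3)}$. Two of these are $1$-compatible exactly when they do not correspond to $\pm e_i$ for the same $i$, and this matches the coordinate orthants.

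For the inductive step I would show that refining $\Sigma_{d-1,n}$ by the fans $\mathcal N(Q_{d,j})$ has the same effect as the star subdivisions described in the statement. Fix $i,j$ with $j-i=d$. The two-dimensional cone spanned by the rays of $\overline{1j}$ and $\overline{i(n+3)}$ is $\cone(-e_{j-2},e_{i-1})$. It lies in $\Sigma_{d-1,n}$ because these chords are $(d-1)$-compatible, since $j-i=d\ge (d-1)+1$. I would check that
\[ -e_{j-2}+e_{i-1}=e_{i-1}-e_{i-1+(d-1)} \]
is the new ray generator $e_{i'}-e_{i'+d-1}$, which under \eqref{eq:chords and rays} corresponds to the chord $\overline{ij}$. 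Because the new ray is the sum of the two generators of the cone being subdivided, each stellar subdivision preserves smoothness.

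The combinatorial half is the standard link description of a star subdivision. A cone survives unless it contains both chords. The new cones are $\overline{ij}$ joined with a face of the star that omits one of $\overline{1j},\overline{i(n+3)}$. I would verify that these are exactly the collections which are $d$-compatible. Chords crossing $\overline{ij}$ are incompatible with it. A pair $\overline{1j'},\overline{i'(n+3)}$ with $j'-i'=d$ stops being compatible. Everything else is unchanged. This gives part (1) for $\mathcal C_{d,n}$.

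Flagness follows because $d$-compatibility is a pairwise condition. Since the subdivisions at different pairs $(i,j)$ with $j-i=d$ live in distinct two-dimensional cones, I would check that their order does not matter: subdividing one such cone never destroys another.

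\textbf{Step 3: identify the subdivided fan with the refinement.} This is the step I expect to be the main obstacle. I must show that the common refinement of $\Sigma_{d-1,n}$ with all the $\mathcal N(Q_{d,j})$ equals $\mathcal C_{d,n}$.

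For "$\mathcal C_{d,n}$ refines each $\mathcal N(Q_{k,j})$", I would exhibit, on each maximal cone of $\mathcal C_{d,n}$, which vertex of $Q_{k,j}$ minimizes $w$. This amounts to checking that the minimum of the partial sums is linear on the cone, using the chord diagram to read off the signs of the needed differences.

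For the converse, it suffices to show that every ray of $\mathcal C_{d,n}$ is a ray of $\Sigma_{d,n}$, since both are complete fans. Proposition~\ref{cor:rays} lists exactly these rays. Given that, each wall of $\mathcal C_{d,n}$ must be a wall of some summand's fan. The new walls through $e_{i}-e_{i+d-1}$ come precisely from $Q_{d,i}$, where the partial sums compare $0$ with $w_i+\dots+w_{i+d-1}$.

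Combining Steps 2 and 3 gives $\Sigma_{d,n}=\mathcal C_{d,n}$, and with it the smoothness, flagness and star-subdivision description in the statement.
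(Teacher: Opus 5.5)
Your Steps 1 and 2 are sound and match the combinatorial side of the paper: the new generator $e_{i-1}-e_{j-2}$ of $\overline{ij}$, preservation of smoothness, and commutativity of the subdivisions because distinct $d$-crossings share no chord. The gap is Step 3. That step carries the entire geometric content of the theorem, and you only describe it.

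\textbf{Missing lemma for ``$\mathcal C_{d,n}$ refines $\Sigma(Q_{d,i})$''.} Only $k=d$ is needed here, since $\mathcal C_{d,n}$ refines $\mathcal C_{d-1,n}=\Sigma_{d-1,n}$. The phrase ``read off the signs from the chord diagram'' restates the claim rather than arguing it. What is missing is the paper's Lemma~\ref{lem:common-minimizer}. For each ray $\rho$ of $\Sigma_{d-1,n}$, the set $I_i(\rho)$ of vertices of $Q_{d,i}$ that do \emph{not} minimize $u_\rho$ is an interval of $[0,d]$; for example $I_i(e_{i+s})=[s+1,d]$ and $I_i(-e_{i+s})=[0,s]$. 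Intervals coming from pairwise $(d-1)$-compatible chords can cover $[0,d]$ only if they include $I_i(e_i)=[1,d]$ and $I_i(-e_{i+d-1})=[0,d-1]$. A simplicial cone lies in a single maximal cone $\sigma_r$ of $\Sigma(Q_{d,i})$ exactly when its generators have a common minimizer. So the lemma says that every maximal cone of $\Sigma_{d-1,n}$ not containing $\tau_i=\cone(e_i,-e_{i+d-1})$ (your $\cone(e_{i-1},-e_{j-2})$, re-indexed) is untouched by $\Sigma(Q_{d,i})$. For cones that do contain $\tau_i$, you also need that compatibility with both $\overline{(i+1)(n+3)}$ and $\overline{1(i+d+1)}$ forces every other generator to be supported off $[i,i+d-1]$.

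\textbf{Your converse fails as written.} First, Corollary~\ref{cor:rays} is deduced from this very theorem, so citing it is circular. Second, for two complete fans with one refining the other, equality does not follow from ``every ray of $\mathcal C_{d,n}$ is a ray of $\Sigma_{d,n}$''. Subdividing each square cone of the fan over the faces of a cube by a diagonal adds no rays. The inference would need $\Sigma_{d,n}$ to be simplicial, which is part of what you are proving. Third, ``each wall of $\mathcal C_{d,n}$ must be a wall of some summand's fan'' does not follow from the ray statement.

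\textbf{How to repair it.} Your last sentence has the right idea, and the paper makes it precise in Proposition~\ref{prop:stars}(2). Write $w=a e_i-b e_{i+d-1}+\sum_\ell c_\ell u_{\rho_\ell}\in C_D$. Then $\langle v_0,w\rangle=0$, $\langle v_d,w\rangle=a-b$, and $\langle v_r,w\rangle\ge a$ for $0<r<d$. Hence the open half $\{a>b\}$ has unique minimizer $v_0$ and the half $\{b>a\}$ has unique minimizer $v_d$. This computes $C_D\cap\sigma_0$ and $C_D\cap\sigma_d$ exactly and gives $\Sigma_{d-1,n}\wedge\Sigma(Q_{d,i})=\Star_{\tau_i}(\Sigma_{d-1,n})$ directly. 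That identity yields both inclusions at once, so you need no separate converse and no prior knowledge of the rays of $\Sigma_{d,n}$.
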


Figure~\ref{fig:exp lift} displays an example. 

The local structure of these fans is also inherited from the associahedron. A chord decomposes the $(n+3)$-gon into two smaller polygons, and the link of the corresponding ray decomposes as a product of two smaller chord diagram fans. In particular, the recursive decomposition of the associahedron along a diagonal persists throughout the Catalan pellytope family, although the parameter $d$ is redistributed between the two resulting subpolygons as follows.

\begin{Theorem*}[Proposition~\ref{prop:link}]
    For every $\overline{ij}\in\mathcal{C}h_d$, there exist $d_1,d_2,n_1,n_2$ such that the link at the ray of $\overline{ij}$ is the product of two smaller fans $\Sigma_{d_1,n_1}\times\Sigma_{d_2,n_2}$,
    where $n_1,n_2\ge 0$ such that $n_1+n_2=n-1$ and for each $i\in\{1,2\}$ either $(d_i,n_i)=(0,0)$ or $d_i\in[n_i]$ with $1\le d_1+d_2\le n-1$.
\end{Theorem*}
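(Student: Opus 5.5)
We will work entirely on the combinatorial side, using the first theorem above (Theorem~\ref{thm:chord fan}): $\Sigma_{d,n}$ is the smooth, flag fan $\mathcal C_{d,n}$ whose cones are the sets of pairwise $d$-compatible $d$-chords. The link of the ray of $\overline{ij}$ is then described by the $d$-chords, other than $\overline{ij}$, that are $d$-compatible with $\overline{ij}$. Pairwise compatibility among such chords decides which of them span cones of the link. So the claim reduces to two statements about the $(n+3)$-gon: (a) these chords split into two families, and any chord of one family is $d$-compatible with any chord of the other; (b) each family, with its induced compatibility relation, is isomorphic to the chord/compatibility data of some $\mathcal C_{d_k,n_k}$. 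The link is simplicial, so (a) and (b) make it the join of two simplicial complexes. We still have to upgrade this join to a product of fans. To do so, project along the ray of $\overline{ij}$ to $\mathbb R^n/\mathbb R v_{ij}\cong\mathbb R^{n-1}$. We will check that the images of the two families of ray generators lie in complementary coordinate subspaces of dimensions $n_1$ and $n_2$, and that after a unimodular change of coordinates they become exactly the generators $\pm e_a$, $e_a-e_{a+k}$ of $\Sigma_{d_k,n_k}$.

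We will do the case analysis by the type of $\overline{ij}$ from \eqref{eq:chords and rays}. In every case the chord cuts the polygon into two subpolygons with $n_1+3$ and $n_2+3$ vertices, where $n_1+n_2=n-1$. A polygon with $n_k=0$ is a triangle, and that case gives the factor with $(d_k,n_k)=(0,0)$.

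\emph{Case $\overline{ij}$ not incident to $1$ or $n+3$,} that is, $e_{i-1}-e_{j-2}$ with $j-i\le d$. The subpolygon on vertices $i,\dots,j$ contains neither $1$ nor $n+3$. Every chord inside it is short enough to be a $d$-chord, and no exceptional crossings occur there, so this factor is a full associahedron fan $\Sigma_{n_1,n_1}$ with $n_1=j-i-2$. The other subpolygon contains $1$ and $n+3$. After contracting the edge $\overline{ij}$ we relabel its vertices consecutively. We then check that a chord is a $d$-chord in the original polygon exactly when its relabelled version is a $d_2$-chord, where $d_2=d-(j-i-1)$ (clipped to $[n_2]$), and likewise that the exceptional crossing condition $j'-i'\ge d+1$ becomes $\ge d_2+1$. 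This works because the length of such a chord drops by exactly $j-i-1$ when it passes over the removed vertices.

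\emph{Case $\overline{ij}$ incident to $1$ or $n+3$,} corresponding to $\mp e_a$. Here one of the two subpolygons contains the vertex $1$ (respectively $n+3$) together with the chord. We will identify the chords on each side by the same relabelling. The fact that both sides contain either $1$ or $n+3$ but not both is what redistributes $d$ so that $1\le d_1+d_2\le n-1$. The exceptional crossings $\overline{1j'},\overline{i'(n+3)}$ have to be tracked here. A chord $\overline{1j'}$ that crosses $\overline{1(a+2)}$ cannot exist, because the two chords share the vertex $1$. A chord $\overline{i'(n+3)}$ crosses $\overline{1(a+2)}$ and is compatible with it only when $a+2-i'\ge d+1$. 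Such chords should not appear in either subpolygon, and we must confirm that they are correctly absent or present. This point deserves care: we need to verify that compatible crossing chords cannot occur in the link, or otherwise account for them in one of the factors.

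The main obstacle is the bookkeeping in (b). We must show that the relabelling respects both the length threshold defining $d$-chords and the exceptional crossing rule, and we must pin down the exact values of $d_1,d_2$ (including the clipping to $[n_k]$ and the degenerate cases) so that the inequality $1\le d_1+d_2\le n-1$ holds. The linear-algebra step of identifying the projected ray generators with those of $\Sigma_{d_k,n_k}$ should follow from the type $\mathtt A$ $g$-vector interpretation mentioned around \eqref{eq:chords and rays}, since freezing a cluster variable in type $\mathtt A_n$ gives $\mathtt A_{n_1}\times\mathtt A_{n_2}$. We will verify this directly on the generators $\pm e_a$, $e_a-e_b$.
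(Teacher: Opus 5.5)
\textbf{There is a genuine gap in your first case, and the statement itself is false there.} Your case split is the same as the paper's, but the relabelling step you rely on for chords not incident to $1$ or $n+3$ is wrong, and no other step can replace it.

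When you contract $i+1,\dots,j-1$, only chords running from $[2,i]$ to $[j,n+2]$ lose $j-i-1$ in length. A chord with both endpoints in $[2,i]$, or both in $[j,n+2]$, keeps its length. So does the difference $b-a$ for a crossing pair $\overline{1b},\overline{a(n+3)}$ with $a<b\le i$. Hence the relabelled chord set is $\mathcal Ch_{d_2}$ plus every one-sided chord whose length lies between $d_2+1$ and $d$. Together with the crossing rule of Remark~\ref{rmk:crossing chords}, this is the data of a Dyck path, and in general it is not $\mathcal Ch_{d'}$ for any $d'$.

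Concretely, take $n=5$, $d=3$ and the chord $\overline{57}$, i.e.\ the ray $e_4-e_5$.
\begin{enumerate}
\item Of the $17$ chords in $\mathcal Ch_3$, exactly $\overline{16},\overline{36},\overline{46},\overline{68}$ cross $\overline{57}$, and none of these crossings is allowed. So the link has $12$ rays.
\item Your recipe predicts $\Sigma_{2,4}$, which has $11$ rays. The formula $d_1=d-(j-i)+1$ in case (1) of the paper's proof predicts the same thing. The extra ray is $\overline{25}=e_1-e_3$, a one-sided chord of length $3$.
\item No choice of parameters works either. By Corollary~\ref{cor:rays}, a product $\Sigma_{d_1,n_1}\times\Sigma_{d_2,n_2}$ with $n_1+n_2=4$ has $8,9,10,11,13$ or $14$ rays, never $12$.
\end{enumerate}
What your relabelling does prove is that the link is $\Sigma_\gamma\times\Sigma_{j-i-2,j-i-2}$ for a Dyck-path fan $\Sigma_\gamma$ as in Corollary~\ref{cor:Dyck fans}. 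In the example, combinatorially $\Sigma_\gamma=\Star_\tau(\Sigma_{2,4})$ with $\tau=\cone(e_1,-e_3)$.

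\textbf{For chords incident to $1$ or $n+3$ the statement holds, but your sketch rests on two incorrect claims and skips the key step.} Take the chord $\overline{1j}$.
\begin{enumerate}
\item The side $j,\dots,n+3,1$ contains both $1$ and $n+3$, contrary to what you wrote. That is exactly why $d$ is not reduced on that side.
\item The compatible crossing chords $\overline{i'(n+3)}$ with $j-i'\ge d+1$ do lie in the link of $\overline{1j}$. For example, when $d=1$ the link of $-e_a$ in the cube fan contains $e_b$ for every $b<a$, and each of these chords crosses $\overline{1(a+2)}$.
\end{enumerate}
So you cannot show such chords are absent; you have to place them in one of the factors. The paper does this by sending $\overline{i'(n+3)}\mapsto\overline{i'j}$, so that $j$ plays the role of $n+3$ in the polygon $1,\dots,j$. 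No clash arises with chords $\overline{aj}$, $a<i'$: these are too long to be $d$-chords.

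With this identification, both factors are $\Sigma_{\min\{d,n_k\},n_k}$, and the minimum matters. The paper's claim $d_1=n_1$ already fails for $d=1$ and the chord $\overline{1(n+2)}$: there the link is $\Sigma_{1,n-1}$, not $\Sigma_{n-1,n-1}$.
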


\begin{table}
\[
\begin{array}{c|rrrrrrrrrr}
n\backslash d & 1&2&3&4&5&6&7&8&9&10\\
\hline
1  & 2&&&&&&&&&\\
2  & 4&5&&&&&&&&\\
3  & 8&12&14&&&&&&&\\
4  & 16&29&37&42&&&&&&\\
5  & 32&70&98&118&132&&&&&\\
6  & 64&169&261&331&387&429&&&&\\
7  & 128&408&694&934&1130&1298&1430&&&\\
8  & 256&985&1845&2645&3317&3905&4433&4862&&\\
9  & 512&2378&4906&7476&9786&11802&13650&15366&16796&\\
10 & 1024&5741&13045&21120&28932&35862&42198&48204&53924&58786
\end{array}
\]
\caption{The values of $S_n^{(d)}$ for $n\le 10$ and $d\le n$. Notice that the Catalan numbers $C_k=S_{k-1}^{(k-1)}$ appear along the diagonal.}
\label{tab:S_n^d}
\end{table}

The recursive nature of the chord diagram description has an immediate enumerative consequence. Let $S_n^{(d)}$ denote the number of vertices of $\mathcal P_{d,n}$.
The first few numbers are shown in Table~\ref{tab:S_n^d}.
Since the vertices are indexed by $d$-admissible chord diagrams, these numbers are obtained by counting such diagrams. 
For $d=2$, this gives the Pell numbers, while for $d=n$ it gives the Catalan numbers. 
For intermediate values of $d$, one obtains a family of sequences generalizing Pell numbers and approaching Catalan numbers as $d\to\infty$. 
The corresponding generating functions and recurrence relations are studied in \S\ref{sec:numbers}. 
We show in Proposition~\ref{prop:recursion} that for fixed $d$ the numbers satisfy a $d$-term recursion.
This leads us to the definition of {\bf Catalan Pell numbers} given by the recursion
\[\textstyle
\mathcal S_n^{(d)}:= 2\mathcal S_{n-1}^{(d)} + \sum_{k=1}^{d-1} C_k \mathcal S_{n-k-1}^{(d)},
\]
for $d,n\ge 0$ subject to the initial condition $\mathcal S_n^{(d)}=C_{n+1}$ for $n\le d$ where $C_{n}$ is the $n^\text{th}$ {Catalan number}.

\begin{Theorem*}[Corollary~\ref{cor:generating function} and Equation~\ref{eq:summary numbers}]
Catalan Pell numbers are given by the generating function 
\begin{equation*}\label{eq:genfunc}\textstyle
\mathcal S^{(d)}(x) = \left(1-2x- \sum_{k=1}^{d-1}C_{k} x^{k+1}\right)^{-1}.
\end{equation*}
Moreover, $\mathcal S_n^{(1)}=2^n$, $\left(\mathcal S_n^{(2)}\right)_n=\left(P_{n+1}\right)_n$ is the sequence of  \emph{Pell numbers}, and $\lim_{d\to \infty} \left(\mathcal S_n^{(d)}\right)_n=(C_{n+1})_n$.
\end{Theorem*}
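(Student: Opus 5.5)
The generating function follows from the defining recursion, checked against the initial conditions; the three special cases then follow from it. I will use the convention $\mathcal S_n^{(d)}=0$ for $n<0$ and $C_0=C_1=1$.

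\textbf{Step 1: the generating function.} Write $F(x)=1-2x-\sum_{k=1}^{d-1}C_kx^{k+1}=1-\sum_{k=0}^{d-1}C_kx^{k+1}$, using $C_0=1$ to absorb the extra $x$. Multiplying $\mathcal S^{(d)}(x)$ by $F(x)$ and comparing coefficients of $x^n$, the identity $\mathcal S^{(d)}(x)F(x)=1$ is equivalent to the convolution
\[
\textstyle\mathcal S_n^{(d)}=\sum_{k=0}^{d-1}C_k\,\mathcal S_{n-k-1}^{(d)}\quad (n\ge1),\qquad \mathcal S_0^{(d)}=1 .
\]
For $n>d$ this is exactly the defining recursion, since $2=C_0+C_1$.

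\textbf{Step 2: the initial range (main obstacle).} The only real work is to check that the initial values $\mathcal S_n^{(d)}=C_{n+1}$ for $n\le d$ also satisfy the convolution. For $1\le n\le d$ all indices $n-k-1$ with $k\le d-1$ and $n-k-1\ge0$ satisfy $k\le n-1$. The convolution therefore reads $\sum_{k=0}^{n-1}C_kC_{n-k}$, which by the Catalan recurrence equals $C_{n+1}-C_0C_n$ — short by $C_n$.

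So the literal initial condition is inconsistent with the stated generating function; indeed the table gives $S_1^{(1)}=2\ne C_2$. I will therefore reinterpret the initial condition as $\mathcal S_n^{(d)}=C_{n+1}$ for $n<d$, i.e.\ in the range where the convolution truncates to a full Catalan convolution. I expect this is the intended meaning, consistent with Proposition~\ref{prop:recursion}.

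With that reading the check goes as follows. For $n\le d-1$, use the recursion $C_{m+1}=\sum_{k=0}^{m}C_kC_{m-k}$ with $m=n$. Splitting off the $k=n$ term $C_nC_0$, where $\mathcal S_{-1}=0$, is handled by the convention. I will verify this index bookkeeping once, carefully, since it is where the argument can go wrong. For $n\ge d$ the convolution coincides with the recursion.

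\textbf{Step 3: special cases.}
\begin{itemize}
\item For $d=1$, $F=1-2x$, so $\mathcal S_n^{(1)}=2^n$.
\item For $d=2$, $F=1-2x-x^2$, which gives $\mathcal S_n=2\mathcal S_{n-1}+\mathcal S_{n-2}$ with $\mathcal S_0=1$ and $\mathcal S_1=2$. These are the Pell numbers $P_{n+1}$ in the indexing $P_1=1$, $P_2=2$, $P_3=5$.
\item As $d\to\infty$, $F(x)\to 1-xC(x)$ coefficientwise, where $C(x)=\sum_k C_kx^k$. The coefficient of $x^n$ stabilizes once $d>n$, so the limit is taken coefficientwise. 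Using $xC(x)^2=C(x)-1$, one gets $1/(1-xC(x))=C(x)$, hence $\frac{1}{x}\bigl(C(x)-1\bigr)\cdot\frac{1}{C(x)}\cdot\ldots$ — more simply, $1/(1-xC)=C$ gives coefficients $C_n$. The indexing shift to $C_{n+1}$ must then be reconciled with the same convention fixed in Step 2. I will do this by checking the first terms against the diagonal of Table~\ref{tab:S_n^d}, which reads $C_{n+1}$ at $(n,n)$.
\end{itemize}

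The main obstacle is thus purely bookkeeping: fixing the exact indexing of the initial conditions and of $C_{n+1}$ versus $C_n$ so that the recursion, the table, and the generating function agree.
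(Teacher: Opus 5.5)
\textbf{There is a genuine gap: an algebra slip in Step 1 that drives everything after it.}

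The rewriting of $F$ is false. Since $\sum_{k=0}^{d-1}C_kx^{k+1}=x+\sum_{k=1}^{d-1}C_kx^{k+1}$, one has
\[
1-2x-\sum_{k=1}^{d-1}C_kx^{k+1}=1-x-\sum_{k=0}^{d-1}C_kx^{k+1},
\]
so you have dropped a $-x$. The slogan $2=C_0+C_1$ is misleading: $C_1$ multiplies $\mathcal S^{(d)}_{n-2}$, a term already in the sum, not $\mathcal S^{(d)}_{n-1}$.

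The correct coefficient identity is $\mathcal S_n^{(d)}=\mathcal S_{n-1}^{(d)}+\sum_{k=0}^{d-1}C_k\mathcal S_{n-k-1}^{(d)}$ for $n\ge 1$, together with $\mathcal S_0^{(d)}=1$. Your version even contradicts your own Step 3: for $d=1$ it forces $\mathcal S_n^{(1)}=\mathcal S_{n-1}^{(1)}$, i.e.\ the constant sequence $1$, not $2^n$.

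\textbf{What you call the main obstacle is an artifact of this slip.} With the corrected identity, for $1\le n\le d$ the right-hand side is
\[
C_n+\sum_{k=0}^{n-1}C_kC_{n-k}=C_n+(C_{n+1}-C_n)=C_{n+1}.
\]
So the initial conditions $\mathcal S_n^{(d)}=C_{n+1}$ for $n\le d$ are consistent exactly as stated. This is the computation $2C_n+\sum_{k=1}^{n-1}C_kC_{n-k}=C_{n+1}$ in the lemma preceding the definition of Catalan Pell numbers. For $n>d$ the identity is literally the defining recursion.

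Three further points follow from the same slip:
\begin{itemize}
\item Your counterevidence is wrong: $C_2=2=S_1^{(1)}$.
\item Your proposed reinterpretation ($n<d$ instead of $n\le d$) would not have rescued the argument. With your convolution the same deficit $C_n$ appears for every $n\ge1$ in the initial range, whichever range you choose.
\item The off-by-one in your limit has the same cause. The coefficientwise limit of $F$ is $1-x-xC(x)$, not $1-xC(x)$. Using $xC(x)^2=C(x)-1$ one checks $\frac{C(x)-1}{x}\bigl(1-x-xC(x)\bigr)=1$, which gives the limit $(C_{n+1})_n$ directly. Checking the table is not a substitute for this.
\end{itemize}
More simply, as the paper implicitly does: $\mathcal S_n^{(d)}=C_{n+1}$ whenever $d\ge n$, so for fixed $n$ the sequence in $d$ is eventually constant.

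Once $F$ is corrected, your overall strategy (multiply the recursion by $x^n$, sum, and check the initial range separately) is the same as the paper's proof of Corollary~\ref{cor:generating function}.
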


\bigskip
\noindent
{\bf Application.}
Binary geometries are affine varieties with stratifications determined by certain simplicial complexes, see Definition~\ref{def:BinaryGeom}.
These geometric objects, like positive geometries, first arose from the study of canonical forms on polytopes and amplituhedra, as a novel method to compute scattering amplitudes \cite{Arkani-Hamed:2013jha,ArkaniHamedBaiLam2017}.
The normal fans of pellytopes and associahedra are examples of simplicial complexes that yield binary geometries \cite{Pellytopes,AHL::Stringy}.
It is therefore natural to ask whether the same is true for general $\Sigma_{d,n}$.

Binary geometries are cut out by polynomial equations called $u$-equations, one for each ray in the fan.
For a $d$-chord $\overline{ij}$ the corresponding $u$-equation takes the form
\[
u_{ij} + \prod_{\overline{pq}\text{ not $d$-compatible to } \overline{ij}}u_{pq}  = 1.
\]
A positive parametrization of this binary geometry is expressed in terms of the polynomials $y_i$ and $q_{k,j}$ that define the Catalan pellytope. 
The results of this application rely on the fact that our polytopes coincide with polytopes associated to Dyck paths \cite{CalvoCortesFrost2026}.
In \emph{loc.cit.} they provide the relevant positive parametrization, while this paper gives an explicit inverse to it in the form of a chamber Ansatz following Berenstein--Fomin--Zelevinsky \cite{parametrizations}.
This enables us to verify that indeed each $\Sigma_{d,n}$ determines a binary geometry, see Proposition~\ref{prop:binary}. 

Several maps related to the chamber Ansatz, such as the dual map and the inverse map have been studied extensively, for example in \cite{Lara_Ghislain,Genz_typeA}.
The explicit diagram we use for the chamber Ansatz is depicted in Figure~\ref{fig:dyck diagram}. 
It resembles the graph of a Gelfand--Tsetlin pattern.

\medskip
\noindent
{\bf Structure.}
The paper is organized as follows. In \S\ref{sec:pre} we recall the necessary background on polytopes and simplicial complexes. 
In \S\ref{sec:polytopes} we introduce the Catalan pellytopes, connect to hypergraphic polytopes, and reveal the connection to the combinatorics of chord diagrams. We also give the connection to Dyck paths and the chamber Ansatz.
In \S\ref{sec:binary} we study the application to binary geometries.

\medskip
\noindent
{\bf Acknowledgements.} We would like to thank Veronica Calvo Cortes and Hadleigh Frost for explaining their work to us and for stimulating discussions.
We are grateful to the Erwin Schrödinger International Institute for Mathematics and Physics (ESI) of the University of Vienna for hosting the programme "Algebra and Geometry of Amplitudes" organized by Daniele Agostini, the first author, Ruth Britto, Johannes Henn, Jianrong Li, Anna-Laura Sattelberger and Oliver Schlotterer. This project started during the programme.
LB is grateful for the support of UNAM DGAPA PAPIIT IN106126 and SECIHTI CF-2023-G-106.
\"OCG acknowledges the support of the Royal Society University Research Fellowship URF\textbackslash R1\textbackslash221236 as well as the STFC consolidated grant ST/X000583/1.

\medskip
\noindent
{\bf Declaration.} In the preparation of this manuscript, the authors used LLMs for language editing and assistance with exposition. 
All mathematical arguments, results, and references were independently verified by the authors, who take full responsibility for the content of the manuscript.

\section{Preliminaries}\label{sec:pre}
Let $[n] = \{1,\dots,n\}$. A {\bf simplicial complex} $\Delta$ on $[n]$ is a non-empty collection of subsets of $[n]$ satisfying the following property:
\begin{itemize}
    \item[(i)] If $S \in \Delta$ and $S' \subset S$ then $S' \in \Delta$.
\end{itemize}
If, additionally, a simplicial complex $\Delta$ satisfies:
\begin{itemize}
    \item[(ii)] $\{k\} \in \Delta$ for each $k \in [n]$ and 
    { \item[(iii)] if $\{k_1,\dots,k_r\} \subset  [n]$ with $\{k_i,k_j\} \in \Delta$ for all $1\leq i < j \leq r$, then $\{k_1,\dots,k_r\} \in \Delta$}
\end{itemize}
we call $\Delta$ a {\bf flag complex}. A simplicial complex $\Delta$ is {\bf pure} if all the maximal sets in $\Delta$ (with respect to inclusion) have the same cardinality. We say that $i, j \in [n]$ are {\bf incompatible} if $\{i, j\} \notin \Delta$, and write $i \nsim j$.
Denote by $|S|$ the cardinality of the set $S$.
Given a simplicial complex $\Delta$ define the {\bf link} $\link_\Delta S$ of $S \in \Delta$ as the simplicial complex 
\begin{eqnarray}\label{eq:link}
    \link_\Delta S := \big\{ T \in \Delta \mid S \cap T = \emptyset \text{ and } S \cup T \in \Delta \big\} 
\end{eqnarray} 
on the set $W_S = \{j \in [n] \setminus S \mid S \cup \{ j \} \in \Delta\}$. In other words, the set $W_S$ contains all $j$ that are compatible with $S$ but do not lie in $S$.

Let $p_1,\dots,p_r\in \mathbb Z^n$. Then the {\bf convex hull} of $p_1,\dots,p_r$ is
\[
\conv\left(p_1,\dots,p_r\right):=\left\{\sum_{i=1}^r \lambda_ip_i:\lambda_i\in \mathbb R_{\ge 0}, \sum_{i=1}r\lambda_i=1\right\}.
\]
The convex hull of a finite number of lattice points is a {\bf (lattice) polytope}. Two lattice polytopes are {\bf unimodularly equivalent} if there exists a lattice isomorphism (given by a unimodular matrix) that sends one polytope to the other.
Unimodular equivalence preserves the combinatorial structure of the polytopes as well as the number of lattice points and the volume (relative to the lattice).
Recall the definition of {\bf Minkowski sum} of two polytopes $P,Q\subset \mathbb R^n$
\[
P+Q:=\{p+q:p\in P,q\in Q\}.
\]
The polytopes in this paper arise as Newton polytopes of Laurent polynomials:
Let $k$ be a field and let $S^{\pm}=k[x_1^{\pm 1},\dots,x_n^{\pm 1}]$. For $a=(a_1,\dots,a_n)\in\mathbb Z^n$ denote by $x^a$ the monomial $x_1^{a_1}\cdots x_n^{a_n}$. 
Given a polynomial $f=c_1x^{a^{(1)}}+c_2x^{a^{(2)}}+\dots +c_rx^{a^{(r)}}$ with $a^{(i)}\in\mathbb Z^n$ and $c_i\in k\setminus \{0\}$ for $i\in[r]$ define its {\bf Newton polytope} as 
\[
\operatorname{Newt}(f):=\conv\left(a^{(1)},\dots,a^{(r)}\right).
\]
Recall that for $f,g\in S$ the Newton polytope of their product $fg$ can be decomposed as Minkowski sum
\[
\operatorname{Newt}(fg)=\operatorname{Newt}(f)+\operatorname{Newt}(g).
\]
In particular, if $f=x^a$ is a monomial, the Newton polytope of the product $x^a g$ is the translate by $a$:
\begin{eqnarray}\label{eq:mink sum monomial}
\operatorname{Newt}(x^a g)=\operatorname{Newt}(g) + a.
\end{eqnarray}
The {\bf conic hull} of $p_1,\dots,p_r$, also called the {\bf cone generated by} $p_1,\dots,p_r$ is
\[
\cone\left(p_1,\dots,p_r\right):=\left\{\sum_{i=1}^r \lambda_ip_i:\lambda_i\in \mathbb R_{\ge 0}\right\}.
\]
For a polytope $P\subset \mathbb R^n$ denote by $\Sigma(P)$ its inner normal fan and by $V(P)$ its set of vertices.
Given two fans $\Sigma,\Sigma'\subset \mathbb R^n$ recall their {\bf common refinement}
\[
\Sigma\wedge\Sigma'=\{C\cap C':C\in \Sigma, C'\in \Sigma'\}.
\]
Moreover, 
\begin{equation}\label{eq:refinement}
    \Sigma(P+Q)=\Sigma(P)\wedge \Sigma(Q).
\end{equation}
A fan $\Sigma$ is called {\bf strictly convex} if every cone is strictly convex, that is it does not contain a linear subspace. 
If $C\subset \mathbb R^n$ is a cone containing a linear subspace $\mathcal L$, its quotient $C/\mathcal L\subset \mathbb R^n/\mathcal L$ is strictly convex and hence has unique rays $\bar \rho_1,\dots,\bar \rho_s\subset \mathbb R^n/\mathcal L$.
Let $\rho_1,\dots,\rho_s\subset \mathbb R^n$ be representatives of $\bar \rho_1,\dots,\bar \rho_s$. 
By a little abuse of notation we will refer to $\rho_1,\dots,\rho_s$ as the rays of $C$, keeping in mind that they are not unique and may be replaced by any other collection $\{\rho_j+\ell_j:j\in[s],\ell_j\in \mathcal L\}$.

Let $(\mathbb R^n)^*$ denote the vector space dual to $\mathbb R^n$ with pairing
\[
\langle \cdot,\cdot \rangle :\mathbb R^{n}\times (\mathbb R^n)^* \to \mathbb R
\]
Given $p\in (\mathbb R^n)^*, a\in \mathbb R$ define the following subsets of $\mathbb R^n$:
\begin{eqnarray*}
H_{p,a}&:=&\{q\in\mathbb R^n:\langle p,q\rangle =a \},\\
H_p^+&:=&\{q\in\mathbb R^n:\langle p,q\rangle \ge0 \},\\
H_p^-&:=&\{q\in\mathbb R^n:\langle p,q\rangle \le0 \}.
\end{eqnarray*}
Set $H_{p}:=H_{p,0}$. Fix a basis $\{e_1,\dots,e_n\}$ for $(\mathbb R^n)^*$ and dual basis $\{f_1,\dots,f_n\}$ for $\mathbb R^n$.

All cones in this paper are assumed to be {\bf rational} cones, that is they are spanned by elements in $\mathbb Z^n$.
For a cone $\sigma\subset \mathbb R^n$ we denote by $\sigma(1)$ its set of rays.
For $\rho\in \sigma(1)$ denote by $u_\rho\in \mathbb Z^n$ its {\bf primitive ray generator}, that is lattice point in $\rho$ closest to the origin.
For a fan $\Sigma\subset \mathbb R^n$ denote by $\Sigma(d)$ the set of its $d$-dimensional cones.

Let $\sigma\subset \mathbb R^n$ be an $n$-dimensional simplicial cone with primitive ray generators $v_1,\dots,v_n$. 
Then $\sigma$ is {\bf smooth} if the determinant of the matrix with columns $v_1,\dots,v_n$ is $\pm1$. 
Said differently, $v_1,\dots,v_n$ form a lattice basis.

Let $\Sigma\subset \mathbb R^n$ be an $n$-dimensional fan and $\sigma\in \Sigma$ a smooth cone with primitive ray generators $v_1,\dots,v_n$ and let $v_0=v_1+\dots+v_n$.
Define $\Sigma'(\sigma)$ as the fan consisting of all cones generated by subsets of $\{v_0,\dots,v_n\}$ not containing $\{v_1,\dots,v_n\}$. Then the {\bf star subdivision of $\Sigma$ relative to $\sigma$} is defined as
\[
\operatorname{Star}_\sigma(\Sigma)= (\Sigma\setminus \sigma)\cup \Sigma'(\sigma).
\]

Let $\tau\in\Sigma$ be a cone with the property that all maximal cones containing $\tau$ are smooth. In particular, $\tau$ is simplicial.
Define $u_\tau=\sum_{\rho\in \tau(1)}u_\rho$.
For each cone $\sigma\in \Sigma$ with $\tau\subseteq \sigma$ define
\[
\Sigma_\sigma^*(\tau) =\{\cone(A):A\subset \sigma(1)\cup\{u_\tau\}, \tau(1)\not\subseteq A\}.
\]
Then the {\bf star subdivision of $\Sigma$ relative to $\tau$} is
\begin{equation}\label{eq:starsub}
\operatorname{Star}_\tau(\Sigma) = \{\sigma'\in \Sigma: \tau\not\subseteq \sigma'\}\cup \bigcup_{\sigma \supseteq \tau} \Sigma_\sigma^*(\tau).
\end{equation}

\begin{Lemma}
    Let $\Sigma\subset \mathbb R^n$ be an $n$-dimensional fan with $r$ rays and $s$ maximal cones and let $\tau\in\Sigma$ be a $k$-dimensional cone with the property that all maximal cones containing $\tau$ are smooth. Assume there are $t$ such maximal cones.
    Then $\Star_\tau(\Sigma)$ is a refinement of $\Sigma$ with $r+1$ rays and $s+t(k-1)$  maximal cones.
\end{Lemma}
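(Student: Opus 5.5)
The plan is to argue cone by cone, using the local description \eqref{eq:starsub}. Let $\tau$ have primitive ray generators $u_1,\dots,u_k$ and set $u_\tau=u_1+\dots+u_k$.

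\emph{Refinement.} Cones $\sigma'$ with $\tau\not\subseteq\sigma'$ are kept unchanged. Now fix a maximal cone $\sigma\supseteq\tau$. By hypothesis it is smooth with primitive generators $u_1,\dots,u_k,v_{k+1},\dots,v_n$. Since $u_\tau$ lies in the relative interior of $\tau\subseteq\sigma$, every new cone in $\Sigma^*_\sigma(\tau)$ lies inside $\sigma$. Conversely, every $x\in\sigma$ is covered. Write
\[
x=\sum_{i\le k}\lambda_iu_i+\sum_{j>k}\mu_jv_j, \qquad m=\min_{i\le k}\lambda_i .
\]
Then $x=m\,u_\tau+\sum_{i\le k}(\lambda_i-m)u_i+\sum_{j>k}\mu_jv_j$, and at least one coefficient $\lambda_{i_0}-m$ vanishes. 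Hence $x$ lies in the cone generated by $\sigma(1)\cup\{u_\tau\}$ minus $u_{i_0}$. This set does not contain $\tau(1)$, so that cone belongs to $\Sigma^*_\sigma(\tau)$.

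\emph{Fan property.} I would show $\Sigma^*_\sigma(\tau)$ is a fan by noting that the vectors $\{u_\tau\}\cup\sigma(1)\setminus\{u_i\}$ form a lattice basis, since swapping $u_i$ for $u_\tau$ is a unimodular change. The pieces meet along faces, because the decomposition above is unique whenever the minimum is attained only once. Pieces coming from different $\sigma,\sigma'\supseteq\tau$ agree on $\sigma\cap\sigma'$. The reason is that the subdivision of a common face depends only on that face and on $u_\tau$, which is defined intrinsically. This compatibility is the step I expect to need the most care, though it is routine once phrased face-wise. It also shows the new fan is smooth near $\tau$.

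\emph{Counting.} The only new ray is $u_\tau$. It is genuinely new: it lies in the relative interior of $\tau$, and for $k\ge2$ it cannot be a ray of $\Sigma$, while for $k=1$ it coincides with an existing ray. This matches the count $t(k-1)=0$ there, so the intended case is $k\ge2$. Assuming $k\ge2$, the fan has $r+1$ rays.

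For maximal cones, each of the $t$ cones $\sigma\supseteq\tau$ is removed and replaced by the $k$ maximal cones obtained by dropping exactly one $u_i$, $i\le k$. Any larger generating set would contain $\tau(1)$, and any smaller one is not $n$-dimensional. The net change is $k-1$ per such $\sigma$. All other maximal cones survive unchanged, so the fan has $s-t+tk=s+t(k-1)$ maximal cones.
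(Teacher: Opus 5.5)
Your proposal is correct and takes the same route as the paper's one-line proof: each of the $t$ smooth maximal cones $\sigma\supseteq\tau$ is replaced by the $k$ cones $\cone(u_\tau,\sigma(1)\setminus\{u_i\})$, a net gain of $k-1$ per cone, and your covering and face-compatibility checks spell out what the paper leaves implicit. Your side remarks are also right: the ray count $r+1$ requires $k\ge 2$ (harmless, since the paper only applies the lemma with $k=2$), and your count of $k$ new maximal cones per $\sigma$ is the accurate one, whereas the paper's phrase that $\Sigma'(\tau)$ ``has $k-1$ maximal cones'' should be read as the net increase.
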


\begin{proof}
    Each of the $t$ cones $\sigma\in \Sigma(n)$ with $\tau\subset\sigma$ in $\Sigma$ is replaced by $\Sigma_\sigma^*(\tau)$, which is the $n$-dimensional fan obtained from $\sigma$ by replacing the face $\tau$ by the $k$-dimensional fan $\Sigma'(\tau)$ which has $k-1$ maximal cones.
\end{proof}

We will need the following criterion regarding commutativity of star subdivisions

\begin{Lemma}\label{lem:commuting stars}
Let $\Sigma$ be a simplicial fan and let $\tau,\tau'\in\Sigma$
be smooth cones with disjoint rays.
Then the star subdivisions at $\tau$ and $\tau'$ commute:
\[
\Star_{\tau'}\left(\Star_{\tau}(\Sigma)\right)=\Star_{\tau}\left(\Star_{\tau'}(\Sigma)\right).
\]
\end{Lemma}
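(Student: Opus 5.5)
We argue directly from the definition \eqref{eq:starsub}, by describing both iterated subdivisions explicitly and checking that they coincide. Write $u_\tau=\sum_{\rho\in\tau(1)}u_\rho$ and $u_{\tau'}=\sum_{\rho\in\tau'(1)}u_\rho$. The hypothesis that $\tau$ and $\tau'$ have disjoint rays is used throughout.

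The first step checks that the second subdivision is defined and that its new ray is the same in both orders. The cone $\tau'$ survives in $\Star_\tau(\Sigma)$, because $\tau\not\subseteq\tau'$: indeed $\tau(1)\cap\tau'(1)=\emptyset$ and $\tau\neq\{0\}$. Its rays, and hence $u_{\tau'}$, are unchanged. Every maximal cone of $\Star_\tau(\Sigma)$ containing $\tau'$ lies in some $\Sigma^*_\sigma(\tau)$ with $\sigma$ smooth, or is an original cone. It is therefore generated by a lattice basis, since replacing one element of a basis $\sigma(1)$ by $u_\tau$ (the sum of a sub-collection that includes that element) is a unimodular change. So the smoothness hypothesis persists, and $\Star_{\tau'}$ is defined on $\Star_\tau(\Sigma)$. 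By symmetry the same holds in the other order.

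The main step is to show that both iterated fans equal the following combinatorially defined fan $\Sigma''$. For each $\sigma\in\Sigma$, $\Sigma''$ contains the cones $\cone(A)$ with
\[
A\subseteq \sigma(1)\cup\{u_\tau \text{ if }\tau\subseteq\sigma\}\cup\{u_{\tau'}\text{ if }\tau'\subseteq\sigma\},
\]
subject to $\tau(1)\not\subseteq A$ and $\tau'(1)\not\subseteq A$. We unwind $\Star_{\tau'}(\Star_\tau(\Sigma))$. A cone of $\Star_\tau(\Sigma)$ is $\cone(B)$ with $B\subseteq\sigma(1)\cup\{u_\tau\}$ and $\tau(1)\not\subseteq B$. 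It contains $\tau'$ exactly when $\tau'(1)\subseteq B$; this uses simpliciality, so that rays of a cone are determined by the generating set. Subdividing replaces such a cone by the cones $\cone(A)$ with $A\subseteq B\cup\{u_{\tau'}\}$ and $\tau'(1)\not\subseteq A$. Since $u_{\tau'}$ is distinct from $u_\tau$ and from every element of $\sigma(1)$, the condition $\tau(1)\not\subseteq A$ is inherited from $B$. The resulting description is symmetric in $\tau$ and $\tau'$, so reversing the roles gives the same $\Sigma''$.

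The expected obstacle is bookkeeping rather than geometry. One must verify that the union over all maximal $\sigma$ in \eqref{eq:starsub} produces no extra cones and loses none. In particular, a cone containing $u_\tau$ must come from some $\sigma\supseteq\tau$; if moreover $\tau'(1)\subseteq A$ or $u_{\tau'}\in A$, then $\sigma\supseteq\tau\cup\tau'$. We would handle this by indexing everything by maximal cones $\sigma$ of $\Sigma$ and splitting into four cases, according to whether $\tau\subseteq\sigma$ and whether $\tau'\subseteq\sigma$. In each case we compare the sets of generating subsets $A$ obtained from the two orders. Disjointness of $\tau(1)$ and $\tau'(1)$ is exactly what makes the two exclusion conditions independent, so that the cases match.
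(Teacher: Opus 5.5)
Your proposal is correct and takes essentially the same approach as the paper. Both arguments write down the iterated subdivision explicitly on each maximal cone $\sigma$ and observe that the description is symmetric in $\tau$ and $\tau'$. The paper only treats $\sigma$ containing both $\tau$ and $\tau'$, where the new maximal cones are $\cone(u_\tau,u_{\tau'},\sigma(1)\setminus\{\rho_i,\rho'_j\})$, and dismisses the remaining cases as independent.

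Your version is more careful in two places the paper leaves implicit. You check that the second subdivision is actually defined: $\tau'$ survives $\Star_\tau$ with the same $u_{\tau'}$, and the smoothness hypothesis persists. You also verify explicitly that disjointness of $\tau(1)$ and $\tau'(1)$ makes the two exclusion conditions independent.
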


\begin{proof}
It suffices to consider a maximal cone $\sigma\in\Sigma$ containing
both $\tau$ and $\tau'$, since otherwise the two subdivisions act
independently on $\sigma$. Write
\[
\tau(1)=\{\rho_1,\ldots,\rho_k\}, \qquad \tau(1)=\{\rho'_1,\ldots,\rho'_{k'}\}.
\]
Since $\Sigma$ is simplicial and $\tau(1)\cap\tau'(1)=\varnothing$,
the part of either iterated subdivision supported on $\sigma$
consists of the cones
\[
\cone\left(
u_\tau,u_{\tau'},
\sigma(1)\setminus\{\rho_i,\rho'_j\}
\right),
\qquad
1\le i\le k,\quad 1\le j\le k',
\]
together with their faces. This description is symmetric in
$\tau$ and $\tau'$, so the resulting fan is independent of the
order of subdivision.
\end{proof}

\section{Catalan pellytopes and chord diagrams}\label{sec:polytopes}
Fix $n\in \mathbb Z_{\ge 0}$.
Denote by $S_y$ the polynomial ring $k[y_1,\dots,y_n]$ over a field $k$.  
We interpret $y_1,\dots,y_n$ as monomials with exponent vector $f_1,\dots,f_n\in \mathbb R^n$.
Recall, from the introduction, the polynomials $q_{k,j}$ defined in \eqref{eq:qkj}, their Newton polytopes $Q_{k,j}$ and the Catalan pellytopes $\mathcal P_{d,n}$ from \eqref{eq:def Catalan pellytope}.
Namely, for a fixed $n$ we have that $\mathcal P_{d,n}$ is the Minkowski sum of all $Q_{k,j}$ up to a fixed degree $k\le d$.
The family of Catalan pellytopes is a subclass of hypergraphic polytopes \cite{BenedettiBergeronMachacek2019}, as we show in Proposition~\ref{prop:hypergraph poly isom}. As such, it contains three distinguished polytopes depending on the parameter $d$.

\begin{Example}
    When $d=1$ the Catalan pellytope $\mathcal P_{1,n}$ is the Minkowski sum of the line segments $\operatorname{Newt}(1+y_i)$ for all $i\in[n]$. Hence, $\mathcal P_{1,n}$ is the hypercube $\square_n$.
\end{Example}

\begin{Example}
    For $d=2$ the Catalan pellytope $\mathcal P_{2,n}$ is, by definition, the {\bf pellytope} studied in \cite{Pellytopes}.
\end{Example}

Moreover, the associahedron is contained in this family, see Corollary~\ref{cor:ass} below. 
To make this precise we realize Catalan pellytopes as hypergraphic polytopes.
Consider the map between the Laurent polynomial ring $S_y^{\pm}=k[y_1^{\pm 1},\dots, y_n^{\pm 1}]$ and degree zero elements in $S_t^\pm=k[t_0^{\pm 1},\dots,t_n^{\pm 1}]$ given by
\begin{eqnarray}\label{eq:laurent isom}
    y_i\mapsto \frac{t_i}{t_{i-1}}.
\end{eqnarray}
Then $q_{k,i}\mapsto \frac{t_{j-1}+t_j+\dots t_{j+k-1}}{t_{j-1}}$.
Let $A_n$ be the lattice $\{(a_0,\dots,a_n)\in \mathbb Z^{n+1}:\sum_{i=0}^n a_i=0\}$. 
A basis for $A_n$ is given by $\epsilon_i-\epsilon_{i-1}$ for $i=1,\dots,n$, where $\epsilon_0,\dots,\epsilon_n$ is the standard basis of $\mathbb Z^{n+1}$.
Furthermore, consider $\mathbb Z^n$ with standard basis $e_i$ for $i=1,\dots,n$.
The map between the Laurent polynomial rings induces a lattice map $e_i\mapsto \epsilon_i-\epsilon_{i-1}$ for $i=1,\dots,n$ and defines a lattice isomorphism $\varphi:\mathbb Z^n \to A_n$. 
It is given by an upper triangular matrix with $1$s on the diagonal, hence it is a unimodular lattice transformation.
For the simplices $Q_{k,j}$ we have
\[
\varphi(Q_{k,j})= \conv(\epsilon_{j-1},\epsilon_j,\dots,\epsilon_{j+k-1}) - \epsilon_{j-1}.
\]
So, up to a translation, we have a lattice isomorphism between $Q_{k,j}$ and the simplex $\Delta_{[j-1,j+k-1]}:=\conv(\epsilon_{j-1},\epsilon_j,\dots,\epsilon_{j+k-1})$.
More generally, for a subset $I\subseteq [0,n]$ define the simplex $\Delta_I$ as the convex hull of $\{\epsilon_i:i\in I\}$. 
The following definition is due to Benedetti, Bergeron and Machacek \cite{BenedettiBergeronMachacek2019} building on work of Postnikov \cite{Postnikov_associahedra}, and Ardila, Benedetti and Doker \cite{ArdilaBenedettiDoker2010}.

\begin{Definition}
Given a hypergraph $\mathcal H$ with vertex set $[0,n]$ and hyperedges $\mathcal E$ the associated {\bf hypergraphic polytope} $\mathcal P_{\mathcal H}$ is defined as the Minkowski sum
\[
\mathcal P_{\mathcal H}= \sum_{I\in\mathcal E} \Delta_I.
\]
\end{Definition}

Let $\mathcal H_{d,n}$ be the hypergraph with vertex set $[0,n]$ and hyperedges are all intervals $I$ with $2\le |I|\le d+1$. Denote the set of hyperedges by $\mathcal E_{d,n}$.

\begin{Example}
For $n=3$ we have the three hypergraphs $\mathcal H_{1,3},\mathcal H_{2,3}$ and $\mathcal H_{3,3}$. They all have vertex set $\{0,1,2,3\}$ and their hyperedges $\mathcal E_{d,3}$ are:
\[
\mathcal E_{1,3}=\{[0,1],[1,2],[2,3]\}, \quad  \mathcal E_{2,3}=\mathcal E_{1,3}\cup\{[0,2],[1,3]\},\quad \mathcal E_{3,3}=\mathcal E_{2,3}\cup\{[0,3]\}. 
\]
\end{Example}

\begin{Proposition}\label{prop:hypergraph poly isom}
    The Catalan pellytope $\mathcal P_{d,n}$ is unimodularly equivalent to the hypergraphic polytope associated with the hypergraph $\mathcal H_{d,n}$.
\end{Proposition}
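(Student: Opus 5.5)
The plan is to use the lattice isomorphism $\varphi:\mathbb Z^n\to A_n$, $e_i\mapsto \epsilon_i-\epsilon_{i-1}$, introduced just before the statement. The key fact is that a linear map commutes with Minkowski sums: $\varphi(P+Q)=\varphi(P)+\varphi(Q)$. It therefore suffices to compute $\varphi$ on each summand $Q_{k,j}$ and then sum.

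First I verify the formula $\varphi(Q_{k,j})=\Delta_{[j-1,j+k-1]}-\epsilon_{j-1}$. The vertices of $Q_{k,j}$ are the exponent vectors of the monomials of $q_{k,j}$, namely $0, f_j, f_j+f_{j+1},\dots, f_j+\dots+f_{j+k-1}$. Under $y_i\mapsto t_i/t_{i-1}$ the product $y_j\cdots y_{j+m-1}$ telescopes to $t_{j+m-1}/t_{j-1}$. Hence the vertex $f_j+\dots+f_{j+m-1}$ is sent to $\epsilon_{j+m-1}-\epsilon_{j-1}$, and $0$ is sent to $\epsilon_{j-1}-\epsilon_{j-1}$. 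Since $\varphi$ is linear, it maps convex hulls to convex hulls, which gives the claimed formula. One should also double-check that the linear map on exponents coincides with the one induced by the ring map; this amounts to checking the basis images, which is immediate. Unimodularity holds because $\varphi$ sends a basis of $\mathbb Z^n$ to the stated basis $\{\epsilon_i-\epsilon_{i-1}\}$ of $A_n$.

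Next I sum over $1\le k\le d$ and $1\le j\le n-k+1$:
\[
\varphi(\mathcal P_{d,n})=\sum_{k=1}^d\sum_{j=1}^{n-k+1}\Delta_{[j-1,j+k-1]}-v,\qquad v:=\sum_{k,j}\epsilon_{j-1}.
\]
The pair $(k,j)$ corresponds to the interval $[j-1,j+k-1]\subseteq[0,n]$, which has $k+1$ elements. This is a bijection between the index set and the intervals $I\subseteq[0,n]$ with $2\le|I|\le d+1$: given such an interval $[a,b]$, recover $j=a+1$ and $k=b-a$, and the conditions $b\le n$ and $j\ge1$ match $j\le n-k+1$ and $j\ge 1$. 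Thus the sum equals $\mathcal P_{\mathcal H_{d,n}}$ translated by the lattice vector $-v$.

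The last point is that a translation by a lattice vector is harmless. The hypergraphic polytope lies in the affine hyperplane $\sum a_i=|\mathcal E_{d,n}|$, so it is translated into $A_n\otimes\mathbb R$ by $v$. Composing $\varphi$ with this integral translation gives the unimodular equivalence, where "unimodular" is understood as an affine lattice isomorphism onto the affine lattice containing $\mathcal P_{\mathcal H_{d,n}}$. I do not expect a genuine obstacle. The only care needed is to get the index bookkeeping of the bijection exactly right and to state precisely what equivalence is meant when the two ambient lattices are $\mathbb Z^n$ and a translate of $A_n$.
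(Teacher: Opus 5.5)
Your proposal is correct and is essentially the paper's argument. You apply the unimodular map $\varphi$ induced by $y_i\mapsto t_i/t_{i-1}$, observe that each $Q_{k,j}$ goes to $\Delta_{[j-1,j+k-1]}-\epsilon_{j-1}$, reindex the pairs $(k,j)$ as the intervals $I\subseteq[0,n]$ with $2\le |I|\le d+1$, and absorb the resulting lattice translation. The differences are cosmetic: you work summand by summand using linearity of $\varphi$ on Minkowski sums, whereas the paper phrases the same computation via the Newton polytope of the product of the rational functions $(t_{j-1}+\dots+t_{j+k-1})/t_{j-1}$; you also spell out the index bijection and the meaning of ``up to translation'' more carefully than the paper does.
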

\begin{proof}
We show that $\mathcal P_{d,n}$ and $\mathcal P_{\mathcal H_{d,n}}$ are related by the unimodular lattice transformation $\varphi:\mathbb Z^n\to A_n$ induced by \eqref{eq:laurent isom}.
We have
\[
\mathcal P_{d,n}=\operatorname{Newt}\left(\prod_{k=1}^d\prod_{j=1}^{n-k+1}  q_{k,j}\right) \mapsto \operatorname{Newt}\left( \prod_{k=1}^d\prod_{j=1}^{n-k+1} \frac{t_{j-1}+t_j+\dots t_{j+k-1}}{t_{j-1}}\right)
\]
Notice that we can modify the indexing to see that 
\[
 \operatorname{Newt}\left( \prod_{k=1}^d\prod_{j=1}^{n-k+1} \frac{t_{j-1}+t_j+\dots t_{j+k-1}}{t_{j-1}}\right)
= \mathcal P_{\mathcal H_{d,n}}+m
\]
for $m\in\mathbb Z^{n+1}$ the exponent of the monomial in the denominator.
\end{proof}

For $d=n$, this recovers the Loday realization of the associahedron \cite{Postnikov_associahedra}.
In Section 8, immediately after Corollary 8.2, on p. 1052 in {\it loc.cit.} the associahedron is given as
\begin{eqnarray}\label{eq:associahedron}
\operatorname{Ass}_n
:=
\operatorname{Newt}
\left(
\prod_{0\leq i\leq j\leq n}
(t_i+t_{i+1}+\cdots+t_j)
\right)
\end{eqnarray}
where $t_i+t_{i+1}+\cdots+t_j$ is a polynomial in $S_t=k[t_0^{\pm 1},\dots,t_n^{\pm 1}]$. 
So, $\operatorname{Ass}_n$ coincides with $\mathcal P_{\mathcal H_{n,n}}$ up to translation.

\begin{Corollary}\label{cor:ass}
    The Catalan pellytope $\mathcal P_{n,n}$  is unimodularly equivalent to the associahedron $\operatorname{Ass}_n$.
\end{Corollary}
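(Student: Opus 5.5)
The plan is to compose two unimodular equivalences: first from $\mathcal P_{n,n}$ to $\mathcal P_{\mathcal H_{n,n}}$ (Proposition~\ref{prop:hypergraph poly isom}), then from $\mathcal P_{\mathcal H_{n,n}}$ to $\operatorname{Ass}_n$ as realized in \eqref{eq:associahedron}. Since unimodular equivalence is transitive and compatible with translations by lattice vectors, it suffices to show that $\operatorname{Ass}_n$ and $\mathcal P_{\mathcal H_{n,n}}$ differ only by a lattice translation.

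For $d=n$ the hypergraph $\mathcal H_{n,n}$ has as hyperedges all intervals $I\subseteq[0,n]$ with $2\le |I|\le n+1$. These are exactly the intervals $[i,j]$ with $0\le i<j\le n$, so
\[
\mathcal P_{\mathcal H_{n,n}}=\sum_{0\le i<j\le n}\Delta_{[i,j]}.
\]
On the other side, by the multiplicativity of Newton polytopes, $\operatorname{Ass}_n$ is the Minkowski sum of $\operatorname{Newt}(t_i+\dots+t_j)=\Delta_{[i,j]}$ over all $0\le i\le j\le n$. The only additional terms are those with $i=j$, namely the points $\Delta_{\{i\}}=\{\epsilon_i\}$, i.e.\ the monomials $t_i$. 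By \eqref{eq:mink sum monomial}, each contributes only a translation by $\epsilon_i$. Hence
\[
\operatorname{Ass}_n=\mathcal P_{\mathcal H_{n,n}}+\sum_{i=0}^n\epsilon_i.
\]

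The remaining issue is that $\varphi$ maps $\mathbb Z^n$ into the degree-zero lattice $A_n$, whereas both $\operatorname{Ass}_n$ and $\mathcal P_{\mathcal H_{n,n}}$ sit in an affine translate of $A_n\otimes\mathbb R$. So the equivalence should be read as: $\varphi$ is a lattice isomorphism $\mathbb Z^n\to A_n$, composed with a translation by a lattice point of $\mathbb Z^{n+1}$ that places the image in the correct affine hyperplane. The polytope $\mathcal P_{\mathcal H_{n,n}}$ lies in the hyperplane of coordinate sum $\binom{n+1}{2}$. The translation vector $m$ from the proof of Proposition~\ref{prop:hypergraph poly isom}, followed by the shift $\sum_i\epsilon_i$, carries $\varphi(\mathcal P_{n,n})$ onto $\operatorname{Ass}_n$. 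Both shifts are integral, so the composite is an affine lattice isomorphism onto the affine lattice containing $\operatorname{Ass}_n$.

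I do not expect any step to be a real obstacle. The only point needing care is this bookkeeping of the ambient lattices and translations, i.e.\ checking that "unimodularly equivalent" is meant up to lattice translation and relative to the affine lattice $A_n+\text{const}$.
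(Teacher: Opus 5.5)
Your proposal is correct and follows the paper's route: apply Proposition~\ref{prop:hypergraph poly isom} with $d=n$, then note that the extra factors $t_i$ (the case $i=j$) in \eqref{eq:associahedron} only translate $\mathcal P_{\mathcal H_{n,n}}$. Your bookkeeping of the affine lattice is more careful than the paper's ``up to translation''; the only small slip is that the shift undoing $m$ is by $-m$, not by $m$.
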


We need the normal fan of each simplex before we proceed.

\begin{Lemma}\label{lem:rays Q}
The inner normal fan $\Sigma(Q_{d,j})$ has rays generated by
\begin{eqnarray}\label{eq:normal vectors simplex Qdi}
-e_j, e_j-e_{j+1}, e_{j+1}-e_{j+2},\dots, e_{j+d-2}-e_{j+d-1},e_{j+d-1}
\end{eqnarray}
and lineality space $\mathcal L_{d,j} = \operatorname{span}_{\mathbb R}\{e_p:p\notin [j,j+d-1]\}$. 
\end{Lemma}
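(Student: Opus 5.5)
The plan is to compute the normal fan of the simplex directly. $Q_{d,j}$ is the convex hull of the $d+1$ exponent vectors of the monomials of $q_{d,j}$:
\[
v_0=0,\quad v_1=f_j,\quad v_2=f_j+f_{j+1},\quad\dots,\quad v_d=f_j+\dots+f_{j+d-1}.
\]
These points are affinely independent, since $v_m-v_{m-1}=f_{j+m-1}$ are linearly independent. So $Q_{d,j}$ is a $d$-simplex, and its affine span is parallel to $\operatorname{span}\{f_j,\dots,f_{j+d-1}\}$.

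First I will identify the lineality space. A functional $w\in(\mathbb R^n)^*$ is constant on $Q_{d,j}$ exactly when it pairs to zero with every $f_p$ for $p\in[j,j+d-1]$. This means $w\in\operatorname{span}_{\mathbb R}\{e_p:p\notin[j,j+d-1]\}=\mathcal L_{d,j}$.

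Next I will find the rays. For a $d$-simplex, the inner normal fan modulo its lineality space is complete and simplicial with exactly $d+1$ rays, one for each facet. The facet opposite $v_m$ is $F_m=\conv(v_i:i\neq m)$. Its inner normal, modulo $\mathcal L_{d,j}$, is the functional $w$ that is constant on $F_m$ and takes a strictly larger value at $v_m$. I will check each case:
\begin{itemize}
\item For $m=0$, take $w=-e_j$. It is $-1$ on $v_1,\dots,v_d$ and $0$ at $v_0$, and $0>-1$.
\item For $m=d$, take $w=e_{j+d-1}$. It vanishes on $v_0,\dots,v_{d-1}$ and equals $1$ at $v_d$.
\item For $1\le m\le d-1$, take $w=e_{j+m-1}-e_{j+m}$. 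Both $e_{j+m-1}$ and $e_{j+m}$ are $0$ on $v_i$ for $i<m$, and both are $1$ on $v_i$ for $i>m$, so $w=0$ there. At $v_m$ the first is $1$ and the second is $0$, so $w(v_m)=1>0$.
\end{itemize}
This yields exactly the list in \eqref{eq:normal vectors simplex Qdi}.

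Finally, these $d+1$ vectors restricted to $\operatorname{span}\{f_j,\dots,f_{j+d-1}\}$ sum to zero, and any $d$ of them are independent. This is consistent with the normal fan of a simplex. The maximal cones are generated by all but one ray, together with $\mathcal L_{d,j}$: the cone of $v_m$ is spanned by the normals of the facets containing $v_m$.

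I do not expect a real obstacle here. The only care needed is getting the inner-normal sign convention right, so that the facet value is the minimum, and handling the degenerate case $d=1$. In that case the list reduces to $-e_j,e_j$, and $Q_{1,j}$ is a segment.
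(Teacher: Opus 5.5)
Your proposal is correct and follows essentially the paper's route: you identify $Q_{d,j}$ as the $d$-simplex with vertices $0, f_j, \dots, f_j+\dots+f_{j+d-1}$ in $\operatorname{span}\{f_j,\dots,f_{j+d-1}\}$, read off its facet normals, and obtain the lineality space as the functionals annihilating that span. The only differences are cosmetic. The paper reads the normals from the inequality description $1\ge x_j\ge\dots\ge x_{j+d-1}\ge 0$, whereas you check each facet by evaluating on the vertices, and you justify both inclusions for the lineality space where the paper argues only one.
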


\begin{proof} The polytope $Q_{d,j}$ is contained in the subspace 
\[ 
V_{d,j} = \operatorname{span}_{\mathbb R} \{f_j,f_{j+1},\ldots,f_{j+d-1}\} 
\] 
and has vertices $ 0, f_j, f_j+f_{j+1}, \ldots, f_j+\cdots+f_{j+d-1}$. 
Thus $Q_{d,j}$ is a $d$-dimensional simplex in $V_{d,j}$. 
Writing $ x=\sum_{r=j}^{j+d-1}x_rf_r, $ the simplex is described by 
\[ 1\geq x_j\geq x_{j+1}\geq\cdots \geq x_{j+d-1}\geq 0. 
\] 
Consequently, the facets of $Q_{d,j}$ have the claimed primitive inward normal vectors.
Since $Q_{d,j}$ is contained in $V_{d,j}$, every vector $e_p$ with $p\notin[j,j+d-1]$ annihilates $Q_{d,j}$.
Hence, the lineality space of its normal fan is as claimed.
\end{proof}

\begin{figure}
    \centering
\begin{tikzpicture}[scale=.75]
 \coordinate (2) at (1.848, 0.765);   
    \coordinate (1) at (0.765, 1.848);   
    \coordinate (8) at (-0.765, 1.848);
    \coordinate (7) at (-1.848, 0.765);
    \coordinate (6) at (-1.848, -0.765);
    \coordinate (5) at (-0.765, -1.848);
    \coordinate (4) at (0.765, -1.848);
    \coordinate (3) at (1.848, -0.765);

    \draw[thick] (1) -- (2) -- (3) -- (4) -- (5) -- (6) -- (7) -- (8) -- cycle;
\draw[red,thick] (6) -- (8) -- (5);
\draw[red,thick] (8) -- (2) -- (5);
\draw[red, thick] (4) -- (2);

    \node[right] at (2) {2};
    \node[above right] at (1) {1};
    \node[above] at (8) {8};
    \node[above left] at (7) {7};
    \node[left] at (6) {6};
    \node[below left] at (5) {5};
    \node[below] at (4) {4};
    \node[below right] at (3) {3};

    \begin{scope}[xshift=7cm]
 \coordinate (2) at (1.848, 0.765);   
    \coordinate (1) at (0.765, 1.848);   
    \coordinate (8) at (-0.765, 1.848);
    \coordinate (7) at (-1.848, 0.765);
    \coordinate (6) at (-1.848, -0.765);
    \coordinate (5) at (-0.765, -1.848);
    \coordinate (4) at (0.765, -1.848);
    \coordinate (3) at (1.848, -0.765);

    \draw[thick] (1) -- (2) -- (3) -- (4) -- (5) -- (6) -- (7) -- (8) -- cycle;
\draw[red,thick] (7) -- (1) -- (6);
\draw[red,thick] (8) -- (2) -- (5);
\draw[red,thick] (2) -- (4);

    \node[right] at (2) {2};
    \node[above right] at (1) {1};
    \node[above] at (8) {8};
    \node[above left] at (7) {7};
    \node[left] at (6) {6};
    \node[below left] at (5) {5};
    \node[below] at (4) {4};
    \node[below right] at (3) {3};
    \end{scope}
\end{tikzpicture}
    \caption{Two chord diagrams for $n=5$. The left diagram is $d$-admissible for $d\ge 3$, the right chord diagram is $3$-admissible, but not $4$-admissible due to the crossing chords $\overline{28}$ and $\overline{16}$ that are not $4$-compatible.}
    \label{fig:exp chord diag}
\end{figure}

\subsection{Chord diagrams}\label{sec:chords}
Let $Q$ be the equioriented quiver of type $\mathtt A_n$
\[
v_1\to v_2\to \dots\to v_{n-1}\to v_n
\]
and consider the corresponding cluster algebra $A$ with principal coefficients with respect to the seed associated with $Q$ \cite{FominZelevinsky-I,FominZelevinsky-IV}.
We identify $Q$ with the triangulation of an $(n+3)$-gon where all chords start at the vertex $1$. 
So we have a bijection between chords of the initial triangulation and vertices of $Q$:
\begin{eqnarray}\label{eq:chords and type A quiver}
v_1 \leftrightarrow \overline{13}, \quad v_2 \leftrightarrow \overline{14},\quad\dots, \quad v_{n-1} \leftrightarrow \overline{1,n+1}, \quad v_n\leftrightarrow \overline{1,n+2}.
\end{eqnarray}
We identify the chords with their $g$-vectors and will see subsequently that these are the ray generators of $\Sigma_{n,n}$, see Corollary~\ref{cor:rays}.
\begin{eqnarray}\label{eq:chords and rays}
    -e_a \leftrightarrow \overline{1(a+2)}, \quad e_b \leftrightarrow \overline{(b+1)(n+3)}, \quad e_i-e_j \leftrightarrow \overline{(i+1)(j+2)}
\end{eqnarray}
for $1\le a,b\le n$, $1\le i\le n-1$ and $i+1\le j\le n$.

\begin{Definition}
A {\bf $d$-chord} is a chord in an $(n+3)$-gon that belongs to the set $\mathcal{C}h_d$ defined as
\begin{eqnarray}\label{eq:def d-chords}
\left\{\overline{1i}:i\in[3,n+2]\right\}\cup\left\{\overline{j(n+3)}:j\in [2,n+1]\right\}\cup \left\{\overline{ij}:2\le i<j\le n+2,j-i\le d\right\}.
\end{eqnarray}
Two $d$-chords $\overline{kl},\overline{ab}\in\mathcal Ch_d$ are {\bf $d$-compatible}, denoted by $\overline{kl}\sim_d\overline{ab}$, if either they do not cross or they do cross and are of form
    \begin{eqnarray}\label{eq:allowed crossing}
    \overline{1j}\text{ and }\overline{i(n+3)}\text{ with }2\le i<j\le n+2\text{ and }j-i\ge {d+1}.
    \end{eqnarray}
The pair of chords in \eqref{eq:allowed crossing} is called a {\bf $k$-crossing} if $j-i=k$.
An {\bf $n$-chord diagram} is a collection of chords in an $(n+3)$-gon. 
We say that an $n$-chord diagram is {\bf $d$-admissible} if it is a maximal collection of {$d$-compatible} $d$-chords.
\end{Definition}

\begin{Remark}\label{rmk:crossing chords}
    The condition on crossing chords in \eqref{eq:allowed crossing} is equivalent to 
    \begin{eqnarray}\label{eq:allowed crossing Dyck}
    \overline{1j}\text{ and }\overline{i(n+3)}\text{ with }2\le i<j\le n+2\text{ and }\overline{ij}\not\in \mathcal Ch_d.
    \end{eqnarray}
    This description allows us to define a set of chords and accordingly chord diagrams associated to a Dyck path (instead of the parameter $d$). We make this precise in \S\ref{sec:Dyck}.
\end{Remark}

Observe that $\mathcal Ch_d\subset \mathcal Ch_{d+1}$ for all $d\le n-1$. 
Moreover, $\mathcal Ch_n$ is the set of all chords connecting non adjacent vertices in an $(n+3)$-gon and two chords are $n$-compatible precisely when they do not cross.
An example of two chord diagrams for $n=5$ is shown in Figure~\ref{fig:exp chord diag}.
Notice that a $(d-1)$-admissible chord diagram that does not contain any crossing chords of form $\overline{1j}$ and $\overline{i(n+3)}$ with $i<j$ and $j-i=d$ is also $d$-admissible.

The following Lemma translates compatibility from chords to rays in \eqref{eq:chords and rays}. It will be useful later on. It is a direct consequence from \eqref{eq:allowed crossing}.

\begin{Lemma}\label{lem:ray compatibility}
\begin{enumerate}
\item The following rays correspond to chords that do not cross $\overline{(i+1)(n+3)}$ corresponding to the ray $e_i$:
\begin{eqnarray}\label{eq:compatbile with e_i}
\begin{cases}
e_\ell & \ell \in[1,i-1]\cup [i+1,n],\\
e_\ell-e_k & (i\le \ell\le n-1,\, i+1\le k\le n \text{ or } 1\le \ell\le i-2,\, 2\le k\le i-1) \text{ and } k-\ell>1,\\
-e_k & k\in[1,i-1].
\end{cases}
\end{eqnarray}
Moreover, the rays $-e_k$ with $k\in[i+d,n]$ correspond to $(d-1)$-compatible crossings with $e_i$.
\item The following rays correspond to chords that do not cross $\overline{1(j+2)}$ corresponding to the ray $-e_j$:
\begin{eqnarray}\label{eq:compatbile with -e_j}
\begin{cases}
e_\ell & \ell \in [j+1,n],\\
e_\ell-e_k & (1\le \ell< k\le j \text{ or } j\le \ell<k\le n) \text{ and } k-\ell>1,\\
-e_k & k\in[1,j-1]\cup [j+1,n].
\end{cases}
\end{eqnarray}
Moreover, the rays $e_\ell$ with $\ell \in[1,j-d+1]$ correspond to $(d-1)$-compatible crossings with $-e_j$.
\item The following rays correspond to chords that do not cross $\overline{(i+1)(j+2)}$ corresponding to the ray $e_i-e_j$:
\[
\begin{cases}
e_\ell & \ell \in [1,i]\cup[j,n],\\
e_\ell-e_k & \ell,k+1\in[1,i]\cup[j+1,n+1]  \text{ and } k-\ell>1,\\
-e_k & k\in[1,i-1]\cup [j,n].
\end{cases}
\]
\end{enumerate}
\end{Lemma}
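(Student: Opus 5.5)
The plan is to reduce everything to the elementary crossing criterion for chords of a convex polygon and then read off index ranges through the dictionary \eqref{eq:chords and rays}. For chords $\overline{ab}$ and $\overline{cd}$ with $a<b$ and $c<d$, the two chords cross if and only if $a<c<b<d$ or $c<a<d<b$. They share an endpoint or are nested or disjoint otherwise. Under \eqref{eq:chords and rays} the three types of rays become
\[
-e_k=\overline{1(k+2)},\qquad e_\ell=\overline{(\ell+1)(n+3)},\qquad e_\ell-e_k=\overline{(\ell+1)(k+2)}\ (\ell<k).
\]
So for each of the three distinguished chords I will substitute these endpoints into the crossing criterion and solve the resulting inequalities for $\ell$ and $k$.

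For part (1), fix $\overline{(i+1)(n+3)}$.
\begin{itemize}
\item A chord $\overline{(\ell+1)(n+3)}$ shares the endpoint $n+3$, so it never crosses; this gives all $e_\ell$ with $\ell\ne i$.
\item A chord $\overline{1(k+2)}$ crosses exactly when $1<i+1<k+2<n+3$, that is $k\ge i$. Hence the non-crossing ones are $k\le i-1$.
\item A chord $\overline{(\ell+1)(k+2)}$ with $2\le \ell+1<k+2\le n+2$ crosses exactly when $\ell+1<i+1<k+2$, that is $\ell<i\le k$. The complement splits into the case $i\le \ell$ (nested or sharing an endpoint) and the case $k+2\le i+1$. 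This yields the two ranges in \eqref{eq:compatbile with e_i}. The extra condition $k-\ell>1$ is carried along from whatever the paper's convention for $\mathcal Ch$ imposes on the mixed rays, and I would check the boundary index conventions at this point.
\end{itemize}
The ``moreover'' statement follows from \eqref{eq:allowed crossing} applied with $d-1$ in place of $d$. The pair $\overline{1(k+2)}$, $\overline{(i+1)(n+3)}$ is an allowed crossing iff $(k+2)-(i+1)\ge d$, i.e.\ $k\ge i+d-1$. I will need to reconcile this carefully with the stated range $[i+d,n]$: an off-by-one check against the indexing of $d$-chords.

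Part (2) is symmetric, with $\overline{1(j+2)}$ fixed.
\begin{itemize}
\item The chords $\overline{1(k+2)}$ share vertex $1$, so they never cross.
\item $\overline{(\ell+1)(n+3)}$ crosses iff $\ell+1<j+2$, so the non-crossing ones are $\ell\ge j+1$.
\item $\overline{(\ell+1)(k+2)}$ crosses iff $\ell+1<j+2<k+2$, i.e.\ $\ell\le j<k$. The complement gives $k\le j$ or $\ell\ge j+1$ (with shared endpoints at the boundary), which is the stated description after adjusting endpoints.
\end{itemize}
The allowed crossings are again read from \eqref{eq:allowed crossing}.

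Part (3) uses the same computation with both endpoints $i+1$ and $j+2$ of $\overline{(i+1)(j+2)}$ in the interior of the boundary path $2,\dots,n+2$. A chord avoids crossing it iff both of its endpoints lie in the closed arc $[i+1,j+2]$, or both lie in the complementary closed arc $[j+2,n+3]\cup[1,i+1]$. Translating each chord type gives the three listed families.

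The only real obstacle is bookkeeping. The index shifts $+1$ and $+2$ in the dictionary, the boundary cases where chords share an endpoint, and the exact thresholds in the ``moreover'' clauses (for instance, whether $d$ or $d-1$ enters the compatibility) all have to be matched precisely against \eqref{eq:def d-chords} and \eqref{eq:allowed crossing}. I would verify each range on small cases such as $n=3,4$ before finalizing.
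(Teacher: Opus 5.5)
Your method is what the paper means when it calls the lemma ``a direct consequence'' of the definitions. You substitute the endpoints from the dictionary $-e_k\leftrightarrow\overline{1(k+2)}$, $e_\ell\leftrightarrow\overline{(\ell+1)(n+3)}$, $e_\ell-e_k\leftrightarrow\overline{(\ell+1)(k+2)}$ into the interleaving criterion, and each of your crossing computations is correct. The gap is in the two places where you say your computed ranges agree with the stated ones. They do not agree, and no adjustment of endpoints fixes it.

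\emph{Part (2).} You correctly find that $\overline{(\ell+1)(k+2)}$ crosses $\overline{1(j+2)}$ iff $\ell\le j<k$. The stated family $j\le\ell<k\le n$ contains $\ell=j$. The chord $\overline{(j+1)(k+2)}$ shares no endpoint with $\overline{1(j+2)}$ and genuinely crosses it. For example, with $n=3$ and $j=1$, $e_1-e_3\leftrightarrow\overline{25}$ crosses $-e_1\leftrightarrow\overline{13}$.

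\emph{Part (3).} Your arc criterion gives that $e_\ell$ avoids $\overline{(i+1)(j+2)}$ iff $\ell\in[1,i]\cup[j+1,n]$. The statement lists $[1,i]\cup[j,n]$ instead. Already for $n=2$, $e_2\leftrightarrow\overline{35}$ crosses $e_1-e_2\leftrightarrow\overline{24}$. Consistently, these two rays span no cone of the normal fan of the pentagon $\mathcal P_{2,2}$.

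So the lemma as literally stated is false at these boundary indices, and your computation cannot prove it. What it proves is a corrected version: $j+1\le\ell$ in the second mixed range of (2), and $[j+1,n]$ in the first line of (3). You should state that explicitly rather than claim agreement.

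You also deferred several other discrepancies; these should be resolved too.
\begin{itemize}
\item The condition $k-\ell>1$ is not a convention of $\mathcal Ch_d$. The short chords $\overline{(\ell+1)(\ell+3)}$, i.e.\ the rays $e_\ell-e_{\ell+1}$, are $d$-chords for every $d\ge 2$.
\item The mixed list in (3) omits the chords nested under $\overline{(i+1)(j+2)}$, namely $e_\ell-e_k$ with $i\le\ell<k\le j$.
\item In (1), your threshold $k\ge i+d-1$ is the exact one. The stated range $[i+d,n]$ is a proper subset: the generators of $\tau_i=\cone(e_i,-e_{i+d-1})$ themselves form a $(d-1)$-compatible crossing with $k=i+d-1$.
\end{itemize}
Read literally as ``the listed rays do not cross,'' these points only shrink the lists and are harmless. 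However, the proof of Proposition~\ref{prop:stars} uses the lemma as a complete description of the rays compatible with $e_i$ and $-e_{i+d-1}$, and under that reading the omissions matter. For instance, when $d\ge3$ the ray $e_i-e_{i+1}$ is compatible with both, yet it is not supported on $[1,i-1]\cup[i+d,n]$.

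The clean result of your computation is an if-and-only-if list for each of the three chord types. You have essentially already derived it.
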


\begin{Definition}\label{def:lift chord diagrams}
Let $D$ be a $(d-1)$-admissible chord diagram that is not $d$-admissible.
So, $D$ contains a pair of crossing chords $\overline{1j}$ and $\overline{i(n+3)}$ with $i<j$ and $j-i=d$. Then define
\[
D_{ij}(1):=\left( D\setminus \left\{\overline{i(n+3)}\right\} \right)\cup\left\{\overline{ij}\right\} \quad\text{and}\quad D_{ij}(n+3):=\left(D\setminus 
\left\{\overline{1j}\right\}\right)\cup \left\{\overline{ij}\right\}
\]
We define the {\bf lift of $D$}, as the collection
\[
D^\uparrow:=\left\{D_{ij}(1),D_{ij}(n+3): \left\{\overline{1j},\overline{i(n+3)}\right\}\subset D \text{ with } i<j,j-i=d\right\}
\]
\end{Definition}
An example of the lift of a chord diagram is shown in Figure~\ref{fig:exp lift}.
By definition we have:
\begin{Lemma}
    The lift of a $(d-1)$-admissible chord diagram is a collection of $d$-admissible chord diagrams.
\end{Lemma}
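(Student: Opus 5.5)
Write $D'=D_{ij}(1)$; the case $D_{ij}(n+3)$ is symmetric under the reflection of the $(n+3)$-gon exchanging the roles of $1$ and $n+3$. The plan has three steps, all comparing $d$-compatibility with $(d-1)$-compatibility.

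\emph{Step 1: $D'$ consists of $d$-chords.} Since $\mathcal Ch_{d-1}\subset\mathcal Ch_d$, every chord of $D$ is a $d$-chord. The new chord $\overline{ij}$ satisfies $2\le i<j\le n+2$ and $j-i=d$, so it is also a $d$-chord.

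\emph{Step 2: $D'$ is pairwise $d$-compatible.} Two chords are $d$-compatible exactly when they are $(d-1)$-compatible and are not a $d$-crossing. So for the old chords of $D'$ we only need to rule out $d$-crossings $\overline{1j'},\overline{i'(n+3)}$ with $j'-i'=d$.
\begin{itemize}
\item Any chord $\overline{i'(n+3)}$ in $D$ with $i'\neq i$ does not cross $\overline{i(n+3)}$. Since $D$ is a collection of chords of the polygon, all such chords are mutually noncrossing, being incident to the common vertex $n+3$.
\item The chords $\overline{1j'}$ in $D$ must likewise coexist with $\overline{1j}$.
\end{itemize}
I expect the main obstacle here. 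A second $d$-crossing $\overline{1j'},\overline{i'(n+3)}$ with $(i',j')\neq(i,j)$ may exist in $D$, and then $D'$ fails to be $d$-compatible. Example: $d-1=1$, $n=3$, and $D=\{\overline{14},\overline{15},\overline{26},\overline{36}\}$, which is $1$-admissible with two $2$-crossings. So the lemma as literally stated seems to need either iterating the lift over all $d$-crossings, or restricting to $D$ with a single $d$-crossing. I would first check whether the definition intends repeated lifting, i.e.\ applying the star subdivisions successively, consistent with Lemma~\ref{lem:commuting stars}. I would then prove the statement for one crossing at a time, replacing $D$ by successive intermediate diagrams.

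Next, check compatibility of $\overline{ij}$ with the remaining chords, using Lemma~\ref{lem:ray compatibility}(3) translated via \eqref{eq:chords and rays}.
\begin{itemize}
\item A chord $\overline{ab}$ with $2\le a<b\le n+2$ in $D$ does not cross $\overline{1j}$ or $\overline{i(n+3)}$. I would show that non-crossing with both forces $\overline{ab}$ to lie inside $[i,j]$, or in $[1,i]$, or in $[j,n+3]$, so it does not cross $\overline{ij}$.
\item A chord $\overline{1j'}$ in $D$ does not cross $\overline{i(n+3)}$, or crosses it with $j'-i\ge d$. Hence $j'\le i$ or $j'\ge j$, and it does not cross $\overline{ij}$.
\item Symmetrically, chords $\overline{i'(n+3)}$ other than $\overline{i(n+3)}$ (removed) satisfy $i'\le i$ or $i'\ge j$.
\end{itemize}

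\emph{Step 3: maximality.} Suppose a $d$-chord $c\notin D'$ is $d$-compatible with $D'$. If $c=\overline{i(n+3)}$, it is not $d$-compatible with $\overline{ij}$, since they share no endpoint order and do cross. Otherwise $c$ is $(d-1)$-compatible with $D\setminus\{\overline{i(n+3)}\}$ and with $\overline{ij}$. I would show that $c$ then is compatible with $\overline{i(n+3)}$, contradicting the maximality of $D$ unless $c$ is itself a new chord $\overline{i'j'}$ with $j'-i'=d$. In that case a separate argument using $\overline{1j}$ and $\overline{ij}$ shows that $c$ crosses one of them. Alternatively, and more cleanly, I would count: the $d$-admissible diagrams are maximal cones of a simplicial fan of dimension $n$, so all have exactly $n$ chords. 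Since $|D'|=|D|=n$, maximality follows once pairwise compatibility holds. This count is only available after the main theorem, though, so I would do the direct argument.
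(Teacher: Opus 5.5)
\paragraph{Overall.} The paper gives no argument for this lemma beyond ``by definition''. A direct verification like yours is therefore the right kind of proof. Your worry about several $d$-crossings is also justified: with the one-step lift of Definition~\ref{def:lift chord diagrams} the statement is false as written. Two parts of the proposal need fixing, though: the counterexample, and the maximality argument in Step 3.

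\paragraph{The counterexample.} Your example does not show the failure. For $n=3$, a $1$-admissible diagram has three chords, one of $\overline{1(a+2)}$, $\overline{(a+1)6}$ for each $a$. In your $D$ the chords $\overline{14},\overline{36}$ form a $1$-crossing, so $D$ is not even $1$-compatible. In fact two $2$-crossings cannot coexist when $n=3$.

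A correct example is $n=4$ with $D=\{\overline{14},\overline{16},\overline{27},\overline{47}\}$, the cone spanned by $e_1,-e_2,e_3,-e_4$. It is $1$-admissible and contains the disjoint $2$-crossings $\{\overline{14},\overline{27}\}$ and $\{\overline{16},\overline{47}\}$. The diagram $D_{24}(1)$ still contains the second of these, so it is not $2$-compatible.

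The lemma must therefore be read in one of two ways:
\begin{itemize}
\item for $D$ with a single $d$-crossing; or
\item with the lift resolving all $d$-crossings at once. These are pairwise disjoint by Proposition~\ref{prop:admissible}(2), and this is exactly what the cones in the proof of Lemma~\ref{lem:commuting stars} describe.
\end{itemize}
Under the single-crossing reading your Step 2 is fine.

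\paragraph{The gaps in Step 3.} First, $\overline{i(n+3)}$ and $\overline{ij}$ share the endpoint $i$, so they do not cross. The real reason $\overline{i(n+3)}$ cannot be added is that it forms a $d$-crossing with $\overline{1j}\in D_{ij}(1)$. Your plan for a $(d-1)$-chord $c$ can be completed as you suggest.

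The case of a new chord $c=\overline{i'j'}$ with $j'-i'=d$ rests on a false claim. Such a $c$ need not cross $\overline{1j}$ or $\overline{ij}$; any $i'>j$ is a counterexample. The missing idea is to use all of $D$:
\begin{itemize}
\item Suppose $c$ crosses $\overline{i(n+3)}$, i.e.\ $i'<i<j'$. Then non-crossing with $\overline{1j}$ gives $j'\le j$, and non-crossing with $\overline{ij}$ gives $j'\ge j$. Hence $j'=j$ and $i'=i$, a contradiction. So $c$ crosses no chord of $D$.
\item Next, $\overline{1j'}$ and $\overline{i'(n+3)}$ are $(d-1)$-compatible with every chord of $D$. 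An inner chord crossing one of them but not $c$ has length at least $d+1$, so it is not in $D$. A chord $\overline{k(n+3)}$ forming a forbidden crossing with $\overline{1j'}$ (resp.\ a chord $\overline{1k}$ with $\overline{i'(n+3)}$) has $i'<k<j'$, and therefore crosses $c$.
\item By maximality of $D$, both $\overline{1j'}$ and $\overline{i'(n+3)}$ lie in $D$. They form a second $d$-crossing, which contradicts the single-crossing assumption.
\end{itemize}
For the iterated lift, the same reasoning, applied also to each removed chord, shows that $c$ must be one of the added chords.

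You were right not to rely on counting chords. Proposition~\ref{prop:admissible}(1) and the proof of Theorem~\ref{thm:chord fan} themselves depend on this lemma.
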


\begin{figure}
    \centering
\begin{tikzpicture}[scale=.75]

    \begin{scope}[xshift=-7cm]
 \coordinate (2) at (1.848, 0.765);   
    \coordinate (1) at (0.765, 1.848);   
    \coordinate (8) at (-0.765, 1.848);
    \coordinate (7) at (-1.848, 0.765);
    \coordinate (6) at (-1.848, -0.765);
    \coordinate (5) at (-0.765, -1.848);
    \coordinate (4) at (0.765, -1.848);
    \coordinate (3) at (1.848, -0.765);

    \draw[thick] (1) -- (2) -- (3) -- (4) -- (5) -- (6) -- (7) -- (8) -- cycle;
    \draw[thick] (7) -- (1);
    \draw[thick] (5)--(2)--(4);
\draw[green,thick] (2) -- (8);
\draw[blue,thick] (1) -- (6);

    \node[right] at (2) {2};
    \node[above right] at (1) {1};
    \node[above] at (8) {8};
    \node[above left] at (7) {7};
    \node[left] at (6) {6};
    \node[below left] at (5) {5};
    \node[below] at (4) {4};
    \node[below right] at (3) {3};

     \node at (-2.8,0) {$D:$};
    \end{scope}
 \coordinate (2) at (1.848, 0.765);   
    \coordinate (1) at (0.765, 1.848);   
    \coordinate (8) at (-0.765, 1.848);
    \coordinate (7) at (-1.848, 0.765);
    \coordinate (6) at (-1.848, -0.765);
    \coordinate (5) at (-0.765, -1.848);
    \coordinate (4) at (0.765, -1.848);
    \coordinate (3) at (1.848, -0.765);

    \draw[thick] (1) -- (2) -- (3) -- (4) -- (5) -- (6) -- (7) -- (8) -- cycle;

    \draw[thick] (7) -- (1);
    \draw[thick] (5)--(2)--(4);
\draw[red,thick] (2) -- (6);
\draw[blue,thick] (1) -- (6);

    \node[right] at (2) {2};
    \node[above right] at (1) {1};
    \node[above] at (8) {8};
    \node[above left] at (7) {7};
    \node[left] at (6) {6};
    \node[below left] at (5) {5};
    \node[below] at (4) {4};
    \node[below right] at (3) {3};

     \node at (-2.8,0) {$D_{26}(1):$};
    \begin{scope}[xshift=7cm]
 \coordinate (2) at (1.848, 0.765);   
    \coordinate (1) at (0.765, 1.848);   
    \coordinate (8) at (-0.765, 1.848);
    \coordinate (7) at (-1.848, 0.765);
    \coordinate (6) at (-1.848, -0.765);
    \coordinate (5) at (-0.765, -1.848);
    \coordinate (4) at (0.765, -1.848);
    \coordinate (3) at (1.848, -0.765);

    \draw[thick] (1) -- (2) -- (3) -- (4) -- (5) -- (6) -- (7) -- (8) -- cycle;
    \draw[thick] (7) -- (1);
    \draw[thick] (5)--(2)--(4);
\draw[green,thick] (2) -- (8);
\draw[red,thick] (2) -- (6);

    \node[right] at (2) {2};
    \node[above right] at (1) {1};
    \node[above] at (8) {8};
    \node[above left] at (7) {7};
    \node[left] at (6) {6};
    \node[below left] at (5) {5};
    \node[below] at (4) {4};
    \node[below right] at (3) {3};
         \node at (-2.8,0) {$D_{26}(8):$};
    \end{scope}
\end{tikzpicture}
    \caption{A $3$-admissible chord diagram $D$ on the left and its lift, the two $4$-admissible chord diagrams $D_{26}(1)$ and $D_{26}(8)$ in the center and on the right.}
    \label{fig:exp lift}
\end{figure}

\begin{Proposition}\label{prop:admissible}
     Let $D$ be a $d$-admissible chord diagram. Then
     \begin{enumerate}
         \item $D$ contains $n$ chords;
         \item if $D$ contains two $(d+1)$-crossings, then the chords of the crossings are pairwise distinct.
     \end{enumerate}
\end{Proposition}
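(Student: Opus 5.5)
Part (2) is essentially immediate from the shape of a $(d+1)$-crossing, so the real work is Part (1), which I would prove by induction on $d$.

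\emph{Part (2).} A $(d+1)$-crossing is a pair $\{\overline{1j},\overline{i(n+3)}\}$ with $j-i=d+1$, so each of its two chords determines the other. If two $(d+1)$-crossings in $D$ share the chord $\overline{1j}$, their partners $\overline{i(n+3)}$ and $\overline{i'(n+3)}$ satisfy $i=j-d-1=i'$, so the crossings coincide. The case of a shared chord $\overline{i(n+3)}$ is symmetric. Hence two distinct $(d+1)$-crossings involve four pairwise distinct chords.

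\emph{Base case of Part (1): $d=1$.} Here the only chords that are not polygon sides are $\overline{1(a+2)}$ and $\overline{(b+1)(n+3)}$, corresponding to $-e_a$ and $e_b$ via \eqref{eq:chords and rays}. These two chords cross exactly when $b\le a$. The difference of the relevant indices is $(a+2)-(b+1)=a-b+1$, and \eqref{eq:allowed crossing} requires this to be at least $2$. So they are $1$-compatible unless $a=b$. Equivalently, this is Lemma~\ref{lem:ray compatibility} with the moreover-clauses. A maximal $1$-compatible set therefore picks exactly one of $\pm e_a$ for each $a\in[n]$, giving $n$ chords. This matches $\mathcal P_{1,n}=\square_n$.

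\emph{Inductive step: reduction to a cover statement.} Let $D$ be $d$-admissible with $d\ge 2$. I would show that exactly one of the following holds:
\begin{enumerate}
\item[(a)] $D$ contains no chord $\overline{ij}$ with $j-i=d$, and then $D$ is already $(d-1)$-admissible;
\item[(b)] $D$ contains such a chord, and then $D$ lies in the lift $D'^\uparrow$ of some $(d-1)$-admissible diagram $D'$ (Definition~\ref{def:lift chord diagrams}).
\end{enumerate}
The construction is as follows. In case (b), set $D'=(D\setminus\{\overline{ij}\})\cup\{\overline{1j},\overline{i(n+3)}\}$ minus the one of these two chords already present or needed to restore compatibility. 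Passing from $D'$ to $D$ by $D_{ij}(1)$ or $D_{ij}(n+3)$ keeps the number of chords fixed. Thus in both cases $|D|=|D'|$ for some $(d-1)$-admissible $D'$, and induction gives $|D|=n$.

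\emph{Main obstacle.} The hard part is checking compatibility and maximality when we exchange chords. First, I must show that every chord of $D\setminus\{\overline{ij}\}$ is $(d-1)$-compatible with $\overline{1j}$, or with $\overline{i(n+3)}$. The chords crossing $\overline{ij}$ are excluded, so the remaining chords cross $\overline{1j}$ or $\overline{i(n+3)}$ only in a controlled way. Lemma~\ref{lem:ray compatibility} lists exactly which rays are compatible with $-e_{j-2}$, $e_{i-1}$ and $e_{i-1}-e_{j-2}$, so this becomes a case check on those three lists. Second, I need maximality of $D'$ in $\mathcal Ch_{d-1}$. For this I would argue by contradiction: if a $(d-1)$-chord $c$ could be added to $D'$, then either $c$ could also be added to $D$, contradicting maximality of $D$, or $c$ crosses $\overline{ij}$ but not $\overline{1j}$ or $\overline{i(n+3)}$, which is impossible by planarity since $\overline{ij}$ lies in the region cut off by those two chords.

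If this exchange argument becomes too tangled, my fallback is induction on $n$ via cutting. Any $d$-chord $\overline{ij}$ with $2\le i<j\le n+2$ splits the polygon into two smaller polygons. The $d$-compatibility conditions restrict to $d_1$- and $d_2$-compatibility on the two pieces, as anticipated by Proposition~\ref{prop:link}. Counting $1+(n_1)+(n_2)=n$ chords with $n_1+n_2=n-1$ would then give Part (1).
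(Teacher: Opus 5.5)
Your Part (2) and your base case $d=1$ are correct. Descending to the cube is a sensible mirror image of the paper's argument, which instead lifts $D$ repeatedly until no crossings remain and it becomes a triangulation. The inductive step, however, does not work as written.

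\emph{First gap: claim (b) is false.} Every diagram in a lift $D'^{\uparrow}$ (Definition~\ref{def:lift chord diagrams}) contains exactly one chord of length $d$. A $d$-admissible diagram can contain several: for $n=5$, $d=2$ the triangulation $\{\overline{14},\overline{15},\overline{17},\overline{24},\overline{57}\}$ is $2$-admissible. So the diagram $D'$ you get by exchanging only $\overline{ij}$ still contains length-$d$ chords, and is not even contained in $\mathcal Ch_{d-1}$.

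You must exchange all length-$d$ chords of $D$ simultaneously. For that you need a fact which your recipe (``minus the one already present or needed to restore compatibility'') leaves unproved: for each $\overline{ij}\in D$ with $j-i=d$, exactly one of $\overline{1j}$, $\overline{i(n+3)}$ lies in $D$.
\begin{itemize}
\item Not both, because they form a $d$-crossing.
\item At least one: a chord of $D$ that crosses $\overline{1j}$ but not $\overline{ij}$ is either $\overline{p(n+3)}$ with $p\le i$ or has length $>d$. So $\overline{1j}$ is $d$-compatible with $D$ unless $\overline{i(n+3)}\in D$, and maximality applies.
\end{itemize}
You also need that two added partners are $(d-1)$-compatible with each other. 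This holds because the corresponding length-$d$ chords do not cross.

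\emph{Second gap: your maximality dichotomy is incomplete.} Take a $(d-1)$-chord $c\notin D'$ that is $(d-1)$-compatible with $D'$. It can fail to be $d$-compatible with $D$ in two ways, and you only treat the first:
\begin{itemize}
\item it crosses a removed length-$d$ chord;
\item it forms a $d$-crossing $\{\overline{1j'},\overline{i'(n+3)}\}$, $j'-i'=d$, with a chord of $D\cap D'$. Such a pair is $(d-1)$-compatible but not $d$-compatible.
\end{itemize}
Your claim (a) rests on exactly this unaddressed point as well.

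The missing idea is to pass from such a crossing to the chord $\overline{i'j'}$. Every chord crossing $\overline{i'j'}$ crosses $\overline{1j'}$ or $\overline{i'(n+3)}$ in a way that is not even $(d-1)$-compatible. This leaves three possibilities, each a contradiction:
\begin{itemize}
\item $\overline{i'j'}\notin D$ and nothing in $D$ blocks it: then $\overline{i'j'}$ can be added to $D$, contradicting maximality.
\item $\overline{i'j'}\in D$: then its missing partner, which is $c$, already lies in $D'$.
\item $\overline{i'j'}$ is blocked only by a removed length-$d$ chord: that chord's two partners both lie in $D'$, and one of them is incompatible with $c$.
\end{itemize}

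With these repairs your downward induction is a complete proof. The paper's upward route needs the mirror-image facts: that lifts are again admissible (taken there as immediate), and that a crossing-free $d$-admissible diagram is a triangulation.

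Your fallback by cutting cannot replace the main argument. Many admissible diagrams, for instance all $1$-admissible ones, contain no chord $\overline{ij}$ with $2\le i<j\le n+2$ to cut along.
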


\begin{proof}
\begin{enumerate}
\item If $D$ does not contain any crossings then it is a triangulation of an $(n+3)$-gon and therefore contains $n$ chords.
If $D$ does contain crossings, let $\Delta(D)$ be the set of $n$-admissible chord diagrams obtained as lifts of lifts of $D$.
Then every chord diagram $D'\in\Delta(D)$ has the same number of chords as $D$ and moreover it is a triangulation of the $(n+3)$-gon.
Hence, $D'$-- and therefore $D$-- has $n$ chords.
\item We need to verify that there are no $d$-admissible chord diagrams containing two distinct pairs of crossing chords $\overline{1j},\overline{i(n+3)}$ and $\overline{1j'},\overline{i'(n+3)}$ with $j-i=j'-i'=d+1$ that share a chord.
That is, we want to exclude $i=i'$ or $j=j'$ while $(i,j)\not=(i',j')$. 
But this cannot happen as $j-i=j'-i'=d+1$.
\end{enumerate}
\end{proof}

Each $d$-admissible chord diagram $D$ determines a cone $C_D$ in $\mathbb R^n$ spanned by the rays that are in bijection with chords in $D$ as given in \eqref{eq:chords and rays}.

\begin{Lemma}\label{lem:starsub chords}
Let $D$ be a $(d-1)$-admissible $n$-chord diagram that contains a pair of crossing chords $\overline{1j}$ and $\overline{i(n+3)}$ with $i<j$ and $j-i=d$. 
\begin{enumerate}
    \item The cones associated to the chord diagrams that form the lift of $D$, $C_{D_{ij}(1)}$ and $C_{D_{ij}(n+3)}$, form a fan.
    \item Let $\tau\subset C_D$ be the face generated by the rays corresponding to $\overline{1j}$ and $\overline{i(n+3)}$.  Then $\operatorname{Star}_\tau(C_D)= C_{D_{ij}(1)}\cup C_{D_{ij}(n+3)}$.
\end{enumerate}
\end{Lemma}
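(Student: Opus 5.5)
The key observation is that the ray attached to the new chord $\overline{ij}$ is exactly the sum of the two ray generators of $\tau$. Under \eqref{eq:chords and rays}, the chord $\overline{1j}$ corresponds to $-e_{j-2}$ and $\overline{i(n+3)}$ corresponds to $e_{i-1}$. The chord $\overline{ij}$ with $2\le i<j\le n+2$ corresponds to $e_{i-1}-e_{j-2}$. Hence
\[
u_\tau = e_{i-1}+(-e_{j-2}) = e_{i-1}-e_{j-2},
\]
which is precisely the ray of $\overline{ij}$. Since $j-i=d\le n$, this chord is a $d$-chord, and when $j-i=1$ the formula degenerates consistently. So the ray introduced by $\operatorname{Star}_\tau$ is the ray of the chord added in both diagrams of the lift.

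For part (2), write $D=\{\overline{1j},\overline{i(n+3)}\}\cup R$, where $R$ consists of the remaining $n-2$ chords; by Proposition~\ref{prop:admissible}, $D$ has $n$ chords. I would first check that $C_D$ is a smooth full-dimensional cone. Concretely, I would show that the $n$ ray generators of $D$ form a lattice basis of $\mathbb Z^n$. I would do this by passing through lifts to a triangulation $D'$, as in the proof of Proposition~\ref{prop:admissible}: the $g$-vectors of a cluster form a $\mathbb Z$-basis. Each lift step replaces $e_{i-1}$ or $-e_{j-2}$ by $e_{i-1}-e_{j-2}$, which is a unimodular change of basis.

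With smoothness in hand, the definition \eqref{eq:starsub} applied to the single maximal cone $C_D$ gives the maximal cones $\operatorname{cone}(u_\tau, R, \rho)$ for $\rho\in\tau(1)$. Keeping $\rho=-e_{j-2}$ gives $C_{D_{ij}(1)}$, and keeping $\rho=e_{i-1}$ gives $C_{D_{ij}(n+3)}$. This identification proves (2).

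Part (1) then follows formally. A star subdivision of a smooth cone is a fan: its two maximal cones are smooth, they intersect in the common facet $\operatorname{cone}(u_\tau,R)$, and together they cover $C_D$. It remains to confirm that the collections $D_{ij}(1)$ and $D_{ij}(n+3)$ are genuinely the chord diagrams in question, that is, pairwise $d$-compatible. This is the content of the preceding lemma on lifts, which I would verify directly with Lemma~\ref{lem:ray compatibility}. The only nontrivial point is that $\overline{ij}$ crosses no chord of $R$. Any chord of $R$ crossing $\overline{ij}$ would also cross $\overline{1j}$ or $\overline{i(n+3)}$, unless it is itself of the form $\overline{1j'}$ or $\overline{i'(n+3)}$. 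That case must be excluded using $(d-1)$-compatibility together with the fact that $\overline{1j}$ and $\overline{i(n+3)}$ lie in $D$.

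I expect the main obstacle to be the smoothness and unimodularity of $C_D$. This is needed to apply the paper's definition of star subdivision at all, and the natural route through iterated lifts is slightly circular with Proposition~\ref{prop:admissible}. As a fallback, I would argue by induction on $d$: for $d=1$ the cones are orthants, hence smooth, and each lift step is the unimodular basis change above, so smoothness propagates to every $d$-admissible diagram.
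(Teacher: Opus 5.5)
Your proof of the lemma is correct and is the paper's argument made explicit. The paper proves both parts by appeal to the definitions of the lift and of the star subdivision. Your computation $u_\tau=e_{i-1}+(-e_{j-2})=e_{i-1}-e_{j-2}\leftrightarrow\overline{ij}$, followed by reading off the maximal cones $\operatorname{cone}(u_\tau,R,\rho)$ for $\rho\in\tau(1)$, is exactly what is left implicit there. You are also right that $\operatorname{Star}_\tau$ as defined in \S\ref{sec:pre} presupposes that $C_D$ is smooth, which the paper only establishes later, in Corollary~\ref{cor:smooth flag}.

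The extra step you add in part (1), however, is false, though fortunately unnecessary. You claim that $D_{ij}(1)$ and $D_{ij}(n+3)$ are pairwise $d$-compatible, and that the only nontrivial point is that $\overline{ij}$ crosses no chord of $R$. Your check of $\overline{ij}$ against $R$ is fine. But pairs inside $R$ are only known to be $(d-1)$-compatible, and a crossing $\overline{1j'},\overline{i'(n+3)}$ with $j'-i'=d$ is $(d-1)$-compatible without being $d$-compatible. Proposition~\ref{prop:admissible}(2) explicitly allows several such crossings in $D$. Concretely, take $n=5$, $d=2$ and $D=\{\overline{14},\overline{16},\overline{17},\overline{28},\overline{48}\}$, with rays $-e_2,-e_4,-e_5,e_1,e_3$. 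This $D$ is $1$-admissible and contains the two $2$-crossings $\{\overline{14},\overline{28}\}$ and $\{\overline{16},\overline{48}\}$. Then $D_{24}(1)$ still contains $\overline{16},\overline{48}$, which cross with $6-4=2<3$. So the unnumbered lemma on lifts that you invoke only holds when $D$ has a single $d$-crossing. $d$-admissible diagrams appear only after all $d$-crossings are lifted, which is why the paper performs these star subdivisions together in Proposition~\ref{prop: star of chord fan}. The statement you are proving concerns cones only, and part (1) needs nothing beyond linear independence of the rays of $D$, so simply drop this step.

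For that linear independence, use your first route rather than the fallback. The fallback needs every $(d-1)$-admissible diagram to be reachable from a $1$-admissible one by lift steps. That is not available at this point, and by the example above, single lift steps need not even produce admissible diagrams.

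The upward route is sound, and not circular, provided you resolve the crossings of $D$ in order of increasing $j-i$. A chord crossing the new chord $\overline{ij}$ would have to cross $\overline{1j}$ or $\overline{i(n+3)}$. If it is an inner chord, that crossing is forbidden; otherwise it is a strictly shorter crossing, contradicting minimality. Hence each new chord crosses nothing, and the process ends at a non-crossing collection. Its rays are part of the $g$-vectors of a type $\mathtt A_n$ cluster, hence part of a $\mathbb Z$-basis. Since every step is unimodular, the rays of $D$ are part of a lattice basis. Full-dimensionality of $C_D$ additionally uses $|D|=n$ from Proposition~\ref{prop:admissible}(1), but the lemma itself does not need it.
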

\begin{proof}
    \begin{enumerate}
        \item  Notice that by definition $C_{D_{ij}(1)}$ and $C_{D_{ij}(n+3)}$ share a facet, hence they form a fan.
        \item This is a direct consequence of Definition~\ref{def:lift chord diagrams} and the star subdivision.
    \end{enumerate}
\end{proof}

\begin{Definition}
Denote the union of all cones $C_D$ associated to $d$-admissible $n$-chord diagrams $D$  by $\mathcal C_{d,n}$. We call this set the {\bf chord diagram fan}.
\end{Definition}

The name is justified by our main result:

\begin{Theorem}\label{thm:chord fan}
Let $1\le d\le n$. Then the normal fan $\Sigma_{d,n}$ of $\mathcal P_{d,n}$ is the chord diagram fan $\mathcal C_{d,n}$.
More precisely, 
\begin{enumerate}
    \item A collection of rays spans a cone in $\Sigma_{d,n}$ if and only if the corresponding chords are pairwise $d$-compatible.
    \item Every maximal cone is generated by the $n$ rays corresponding to a $d$-admissible $n$-chord diagram.
\end{enumerate}
\end{Theorem}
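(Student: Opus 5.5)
We argue by induction on $d$, comparing $\Sigma_{d,n}$ with $\Sigma_{d-1,n}$ through the refinement formula \eqref{eq:refinement}.

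\emph{Base case $d=1$.} $\mathcal P_{1,n}$ is the cube, so $\Sigma_{1,n}$ is the coordinate fan with rays $\pm e_i$. These are exactly the $1$-chords $\overline{1(a+2)}$ and $\overline{(b+1)(n+3)}$. By Lemma~\ref{lem:ray compatibility}, $e_i$ and $-e_k$ are $1$-compatible if and only if $k\neq i$. Hence a set of rays spans a cone precisely when it contains no pair $\pm e_i$, which is the statement for $d=1$.

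\emph{Inductive step.} By \eqref{eq:def Catalan pellytope} and \eqref{eq:refinement},
\[
\Sigma_{d,n}=\Sigma_{d-1,n}\wedge\bigwedge_{j=1}^{n-d+1}\Sigma(Q_{d,j}).
\]
By Lemma~\ref{lem:rays Q}, each $\Sigma(Q_{d,j})$ has lineality space $\mathcal L_{d,j}$ and $d+1$ rays. It is the normal fan of a simplex, so its maximal cones are spanned by all but one of the rays $-e_j,\,e_j-e_{j+1},\dots,e_{j+d-1}$, modulo $\mathcal L_{d,j}$.

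Fix a $(d-1)$-admissible diagram $D$ with cone $C_D$, which is smooth by induction. We intersect $C_D$ with the cones of $\Sigma(Q_{d,j})$. The key claim is the following. $C_D$ lies in a single cone of $\Sigma(Q_{d,j})$ unless $D$ contains the crossing pair $-e_{j+d-1}\leftrightarrow\overline{1(j+d+1)}$ and $e_{j}\leftrightarrow\overline{(j+1)(n+3)}$, which is a $d$-crossing. In that case $\Sigma(Q_{d,j})$ cuts $C_D$ exactly along the hyperplane through $e_j-e_{j+d-1}$ spanned with the remaining rays. The resulting two pieces are $C_{D_{ij}(1)}$ and $C_{D_{ij}(n+3)}$ with $(i,j)\to(j+1,j+d+1)$, and the new ray $e_j-e_{j+d-1}=e_j+(-e_{j+d-1})$ corresponds to $\overline{(j+1)(j+d+1)}$ under \eqref{eq:chords and rays}.

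To prove the claim, we write the linear functions defining the simplex chambers. Modulo $\mathcal L_{d,j}$, a point $x=\sum c_r e_r$ is mapped to $(c_j,\dots,c_{j+d-1})$, and the vertex of $Q_{d,j}$ minimizing $x$ is determined by the minimum among the partial sums $0,\,c_j,\,c_j+c_{j+1},\dots$. We then check, using the compatibility lists in Lemma~\ref{lem:ray compatibility}, that all rays of $C_D$ other than a $d$-crossing pair have coordinates on $[j,j+d-1]$ making the same partial sum minimal (or tied). Rays of type $e_\ell-e_k$ with $k-\ell\le d-1$ contribute a sign pattern that is compatible with every partial sum, because such a ray has nonzero coordinates only at $\ell$ and $k$. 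The rays $\pm e$ fall into the two possible cases, and only the crossing $e_j$, $-e_{j+d-1}$ gives opposite preferences. This yields exactly the star subdivision of Lemma~\ref{lem:starsub chords}(2).

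By Proposition~\ref{prop:admissible}(2), distinct $d$-crossings in $D$ involve disjoint chords. Lemma~\ref{lem:commuting stars} therefore shows that the subdivisions for different $j$ commute. Consequently $\Sigma_{d,n}$ is the iterated star subdivision, and its maximal cones are the $C_{D'}$ obtained as iterated lifts of $(d-1)$-admissible diagrams.

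It remains to identify these iterated lifts with the $d$-admissible diagrams. A $d$-admissible diagram $D'$ either contains no chord of length $d$, in which case it is already $(d-1)$-admissible and has no $d$-crossing. Otherwise, replacing each chord $\overline{ij}$ with $j-i=d$ by the crossing pair (keeping the other member present) recovers a $(d-1)$-admissible $D$. Parts (1) and (2) then follow. Smoothness is preserved, since $u_\tau$ is the sum of a lattice basis pair. Maximal $d$-compatible sets have $n$ elements by Proposition~\ref{prop:admissible}(1). Flagness holds because the cones are given by a pairwise compatibility condition: a pairwise $d$-compatible set extends to a maximal one, and hence spans a face of some $C_{D'}$.

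The main obstacle is the partial-sum computation showing that $\Sigma(Q_{d,j})$ separates $C_D$ only along the $d$-crossing. This requires a careful case analysis of the rays $e_\ell-e_k$ straddling the boundary of the window $[j,j+d-1]$. I would first verify it for the pellytope case $d=2$ and then generalize.
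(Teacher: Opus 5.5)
Your overall architecture is the paper's: induction on $d$, identification of each refinement $\Sigma_{d-1,n}\wedge\Sigma(Q_{d,j})$ with the star subdivision at $\tau_j=\cone(e_j,-e_{j+d-1})$, commutation via Proposition~\ref{prop:admissible}(2) and Lemma~\ref{lem:commuting stars}, and matching the two pieces with the lift.

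The gap is the key claim itself. You defer it as ``the main obstacle'', and the justification you sketch does not work, for two reasons.

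First, rays $e_\ell-e_k$ inside the window are not neutral. For $j\le\ell<k\le j+d-1$ one has $\langle v_r,e_\ell-e_k\rangle=1$ for $r\in[\ell-j+1,k-j]$ and $0$ otherwise. So such a ray excludes the vertices $v_{\ell-j+1},\dots,v_{k-j}$; for instance, for $d\ge 3$, $e_j-e_{j+1}$ does not lie in $\sigma_1$.

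Second, the rays of $C_D$ have different minimizing sets $M_j(\rho)$. What you need is that their total intersection is nonempty. These sets are complements of intervals, so no Helly property is available, and checking that no pair other than $e_j,-e_{j+d-1}$ has ``opposite preferences'' proves nothing. Concretely, for $-e_j$, $e_{j+1}$, $e_j-e_{j+1}$ the minimizing sets are $[1,d]$, $[0,1]$ and $\{0\}\cup[2,d]$. They intersect pairwise but have empty common intersection. This configuration is excluded only because $\overline{(j+1)(j+3)}$ crosses $\overline{1(j+2)}$. That is a compatibility condition involving a non-coordinate ray, and your sketch never uses such conditions.

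The missing idea is the paper's Lemma~\ref{lem:common-minimizer}. Pass to the complements $I_j(\rho)=[0,d]\setminus M_j(\rho)$; these are intervals determined by where the chord sits relative to the window. Then show that intervals coming from pairwise $(d-1)$-compatible chords can cover $[0,d]$ only if the family contains both $I_j(e_j)=[1,d]$ and $I_j(-e_{j+d-1})=[0,d-1]$. The reason is that any other chain of intervals bridging $0$ to $d$ forces two chords that cross in a non-compatible way.

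You also need the computation of Proposition~\ref{prop:stars}(2) for the subdivided case. Every other ray of a cone containing $\tau_j$ pairs to $0$ with $v_0$ and $v_d$, and nonnegatively with $v_1,\dots,v_{d-1}$. Hence $ae_j-be_{j+d-1}+\sum_\ell c_\ell u_{\rho_\ell}$ is minimized at $v_0$ or at $v_d$ according to the sign of $a-b$. Only this shows that the cut is exactly along $u_j$; you assert it without argument.

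Your final paragraph, showing that every $d$-admissible diagram is an iterated lift, addresses a direction the paper leaves implicit, and it is worth keeping. To complete it, you need one observation. A $d$-admissible diagram containing $\overline{ij}$ with $j-i=d$ contains exactly one of $\overline{1j}$ and $\overline{i(n+3)}$. This follows from maximality: among chords compatible with $\overline{ij}$, the only one that is not $d$-compatible with $\overline{1j}$ is $\overline{i(n+3)}$, and vice versa.
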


\subsection{Proof of Theorem~\ref{thm:chord fan}}
The theorem relies on the fact that the refinements of $\Sigma_{d-1,n}$ by $\Sigma(Q_{d,i})$ are star subdivisions and we prove this by induction on $d$.
Define for $i\in[1,n-d+1]$
\begin{eqnarray}
    \tau_i=\cone(e_i,-e_{i+d-1}) \quad \text{and} \quad u_i=e_i-e_{i+d-1}.
\end{eqnarray}

Recall the ray generators of $\Sigma(Q_{d,i})$ from \eqref{eq:normal vectors simplex Qdi}. Label the vertices of $Q_{d,i}$ as $v_0=0, v_1=f_i, v_2= f_i+f_{i+1},\ldots,
v_{d}=f_i+\cdots+f_{i+d-1}.$ Moreover, denote the maximal cone in $\Sigma(Q_{d,i})$ corresponding to vertex $v_r$ by $\sigma_r$.
For a ray $\rho\in\Sigma_{d-1,n}(1)$, with primitive
generator $u_\rho$, define the minimizing set
\[
M_i(\rho)=\left\{r\in[0,d]:\langle v_r,u_\rho\rangle=\min_{s\in[0,d]}\{\langle v_s,u_\rho\rangle\}\right\}.
\]
In fact, the minimizing indices correspond precisely to the vertices of the $(n+3)$-gon that do not lie strictly on the boundary interval cut off by the chord corresponding to $\rho$.
This will become clear in the proof of the following Lemma:

\begin{Lemma}\label{lem:common-minimizer}
Let $A$ be a collection of rays whose chords  are pairwise $(d-1)$-compatible. Then
\[
\bigcap_{\rho\in A}M_i(\rho)=\varnothing \quad \Leftrightarrow \quad e_i,-e_{i+d-1}\in A.
\]
\end{Lemma}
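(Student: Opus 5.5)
The plan is to compute every minimizing set $M_i(\rho)$ explicitly and then interpret it on the $(n+3)$-gon. This turns the statement into a Helly-type statement about intervals cut out by pairwise non-crossing chords.

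\emph{Computing the sets.} Since $v_r=f_i+\dots+f_{i+r-1}$, we have $\langle v_r,e_\ell\rangle=1$ if $\ell\in[i,i+r-1]$ and $0$ otherwise. So $\langle v_r,e_\ell\rangle$ is a step function in $r$ jumping at $r=\ell-i+1$ when $\ell$ lies in the window $[i,i+d-1]$, and is constant otherwise. Put $a=\ell-i+1$ and $b=k-i+1$. For each ray type in \eqref{eq:chords and rays} this gives:
\begin{itemize}
\item $e_\ell$: $M_i=[0,a-1]$ if $\ell$ is in the window, and $[0,d]$ otherwise.
\item $-e_k$: $M_i=[b,d]$ if $k$ is in the window, and $[0,d]$ otherwise.
\item $e_\ell-e_k$ with $\ell<k$:
\begin{itemize}
\item $M_i=[0,a-1]\cup[b,d]$ if both indices are in the window;
\item $M_i=[b,d]$ if only $k$ is in the window (so $\ell<i$);
\item $M_i=[0,a-1]$ if only $\ell$ is in the window (so $k>i+d-1$);
\item $M_i=[0,d]$ otherwise.
\end{itemize}
\end{itemize}
The case $\ell<i$ and $k>i+d-1$ cannot occur, because $(d-1)$-chords have $k-\ell\le d-2$.

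\emph{Chord interpretation.} Identify $r\in[0,d]$ with the polygon vertex $i+1+r$, so the window corresponds to the vertices $i+1,\dots,i+d+1$. Under this identification each $M_i(\rho)$ is the set of window vertices that do not lie strictly inside the boundary arc cut off by the chord of $\rho$ on the side away from the vertices $1$ and $n+3$. This justifies the remark made before the lemma. In particular, $M_i(e_i)=\{0\}$ and $M_i(-e_{i+d-1})=\{d\}$. These are disjoint, which proves the direction $\Leftarrow$ immediately.

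\emph{The direction $\Rightarrow$.} Suppose $\bigcap_{\rho\in A}M_i(\rho)=\varnothing$. Every $M_i(\rho)$ contains $0$ or $d$. Hence $A$ must contain a ray of suffix type $[b,d]$, meaning $-e_k$ or $e_\ell-e_k$ with $\ell<i\le k$, and also a ray of prefix type $[0,a-1]$, meaning $e_\ell$ or $e_\ell-e_k$ with $\ell\le i+d-1<k$. Let $B$ be the largest suffix start and $A'-1$ the smallest prefix end. The two-sided sets $[0,a-1]\cup[b,d]$ come from chords $\overline{(\ell+1)(k+2)}$ lying entirely inside the window.
\begin{enumerate}
\item By Lemma~\ref{lem:ray compatibility}, a prefix chord and a suffix chord that both meet the window do not cross only if they are nested or disjoint on the boundary. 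In either case one checks that $B\le A'-1$.
\item The sole exception is a permitted crossing of the form $\overline{1j},\overline{i'(n+3)}$ with $j-i'\ge d$. Since $j$ and $i'$ must correspond to window positions, this forces $i'=i+1$ and $j=i+d+1$, that is, $e_i,-e_{i+d-1}\in A$.
\item Outside this exception, the interval $[B,A'-1]$ is nonempty. Each inner chord is non-crossing with the two extremal chords, so its arc either lies inside or lies outside the region between them.
\item Among the inner chords themselves, pairwise non-crossing gives a laminar family of arcs. The complement of a laminar family of open arcs inside $[B,A'-1]$ always retains an endpoint, for instance the endpoint of a maximal arc or $B$ itself.
\end{enumerate}
This would yield a common minimizer, a contradiction.

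The main obstacle is the bookkeeping in steps 1 to 4. I must make sure that every compatibility case of Lemma~\ref{lem:ray compatibility}, including the rays $\pm e_\ell$ lying outside the window, is covered. I must also check that a crossing permitted by $(d-1)$-compatibility can empty the intersection only in the exact configuration $\{e_i,-e_{i+d-1}\}$. I would handle this by first proving the laminar (Helly) claim for non-crossing arcs, and then treating the single allowed crossing type separately.
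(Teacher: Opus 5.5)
Your proposal is correct and follows essentially the paper's route: you use the same explicit computation of the sets $M_i(\rho)$, read their complements as the boundary arcs cut off by the chords inside the window $\{i+1,\dots,i+d+1\}$, and conclude that covering the window with pairwise $(d-1)$-compatible chords forces the unique permitted crossing $e_i,-e_{i+d-1}$ (the paper compresses this into a one-line ``minimal chain of intervals'' remark). Your prefix/suffix/inner bookkeeping is more explicit than the paper's, and step 4 can be shortened: non-crossing with the extremal suffix chord already shows that no inner arc contains $B$, so $B$ itself is a common minimizer and no laminarity argument is needed.
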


\begin{proof}
For $0\leq r\leq d$ and $a\in[n]$, we have
\begin{eqnarray}\label{eq:pair with vertex}
    \langle v_r,e_a\rangle
=
\begin{cases}
1,&i\leq a\leq i+r-1,\\
0,&\text{otherwise}.
\end{cases}
\end{eqnarray}
We first compute the minimizing sets. 
If $a\notin[i,i+d-1]$, then $\langle v_r,e_a\rangle =0$ for all $r\in[0,d]$. 
So, $M_i(e_a)=M_i(-e_a)=[0,d]$. 
For $0\leq t\leq d-1$, we have $\min_{s\in[0,d]}\{\langle v_s,e_{i+t} \rangle\}=0$ and the minimum is achieved for $s\in[0,t]$.
Hence, $M_i(e_{i+s})=[0,s]$ and similarly $M_i(-e_{i+s})=[s+1,d]$.
For a ray $e_a-e_b$, where $a<b$ and $b-a\leq d-2$, the
nontrivial cases are
\[
M_i(e_a-e_b)=
\begin{cases}
[b-i+1,d],
   &a<i\leq b\leq i+d-1,\\
[0,a-i]\cup[b-i+1,d],
   &i\leq a<b\leq i+d-1,\\
[0,a-i],
   &i\leq a\leq i+d-1<b.
\end{cases}
\]
In all remaining cases, $M_i(e_a-e_b)=[0,d]$.
Equivalently, define the interval $I_i(\rho):=[0,d]\setminus M_i(\rho)$.
Notice that every nonempty $I_i(\rho)$ is an interval in $[0,d]$.
Then the above computation gives
\begin{eqnarray}\label{eq:cases I(rho)}
    I_i(\rho)=
    \begin{cases}
        [s+1,d], &  u_\rho=e_{i+s}, \\
        [0,s], &  u_\rho=-e_{i+s},\\
        [0,b-i], &  u_\rho =e_a-e_b,\ a<i\leq b\leq i+d-1, \\
        [a-i+1,b-i], &  u_\rho =e_a-e_b,\ i\leq a<b\leq i+d-1,\\
        [a-i+1,d], & u_\rho=e_a-e_b,\ i\leq a\leq i+d-1<b.
    \end{cases}
\end{eqnarray}
We use the convention $[r,s]=\varnothing$ if $s<r$.
Recall that by \eqref{eq:chords and rays} a ray corresponds to chord $\overline{a_\rho b_\rho}$ with $1\le a_\rho<b_\rho\le n+3$. 
It is not hard to verify that
\begin{eqnarray}\label{eq:Iirho from chords}
    I_i(\rho)= [a_\rho,b_\rho] \cap [i+1,i+d+1] - (i+1).
\end{eqnarray}
where $-(i+1)$ refers to a shift by $i+1$.
Then from \eqref{eq:cases I(rho)} we see that if a collection of $I_i(\rho)$, for pairwise $(d-1)$-compatible chords, covers all of $[0,d]$, then it must contain both $[1,d]=I_i(e_i)$ and $[0,d-1]=I_i(-e_{i+d-1})$.
{
Indeed, from \eqref{eq:cases I(rho)} we also deduce that any other minimal chain of intervals joining $0$ to $d$ would give two chords that cross in a way that is not $(d-1)$-compatible.
}
Consequently,
\[
\bigcup_{\rho\in A}I_i(\rho)=[0,d] \quad \Rightarrow \quad e_i,-e_{i+d-1}\in A.
\] 
Conversely, $M_i(e_i)=\{0\}$ and $M_i(-e_{i+d-1})=\{d\}$, so these two minimizing sets are disjoint.
\end{proof}

\begin{Proposition}\label{prop:stars}
Assume that $\Sigma_{d-1,n}=\mathcal C_{d-1,n}$.
Let $D$ be a $(d-1)$-admissible $n$-chord diagram.
Then 
\begin{enumerate}
\item If $\tau_i\not\subseteq C_D$, then there exists
$r\in[0,d]$ such that $C_D\subseteq \sigma_r$.
Consequently, $C_D$ is not subdivided in the common
refinement with $\Sigma(Q_{d,i})$.
\item If $\tau_i\subseteq C_D$, so that $C_D=\cone(e_i,-e_{i+d-1},\rho_1,\ldots,\rho_{n-2})$ for $\rho_1,\ldots,\rho_{n-2}$ rays.
Then
\begin{eqnarray*}
\begin{split}
C_D\cap \sigma_0 = \cone(e_i,u_i,\rho_1,\ldots,\rho_{n-2}),\quad \text{and} \quad C_D\cap \sigma_d = \cone(u_i,-e_{i+d-1},\rho_1,\ldots,\rho_{n-2}).
\end{split}
\end{eqnarray*}
All intersections of $C_D$ with the remaining cones
$\sigma_1,\ldots,\sigma_{d-1}$ are faces of these two cones.
\item  The common refinement of $\Sigma_{d-1,n}$ and $\Sigma(Q_{d,i})$, denoted $\Sigma_{d-1,n}\wedge \Sigma(Q_{d,i})$, is the star subdivision of $\Sigma_{d-1,n}$ at $\tau_i$.
\end{enumerate}
\end{Proposition}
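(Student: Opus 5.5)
The plan is to work with the linear functionals $\langle v_r,\cdot\rangle$ on $\mathbb R^n$. Recall that $\sigma_r=\{x:\langle v_r,x\rangle\le\langle v_s,x\rangle \ \forall s\}$, and that for $x=\sum_\rho\lambda_\rho u_\rho$ with $\lambda_\rho\ge0$ the function $r\mapsto\langle v_r,x\rangle=\sum\lambda_\rho\langle v_r,u_\rho\rangle$ is a nonnegative combination.

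For (1), suppose $\tau_i\not\subseteq C_D$. Since the chords of $D$ are pairwise $(d-1)$-compatible, Lemma~\ref{lem:common-minimizer} applied to $A=C_D(1)$ yields an index $r\in\bigcap_{\rho\in A}M_i(\rho)$. Then every generator $u_\rho$ lies in $\sigma_r$, because $r$ minimizes $\langle v_s,u_\rho\rangle$. As $\sigma_r$ is convex, $C_D\subseteq\sigma_r$. Hence $C_D\cap\sigma_s$ is a face of $C_D$ for every $s$, and $C_D$ is not subdivided.

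For (2), write $A'=\{\rho_1,\dots,\rho_{n-2}\}$. I first claim that $\bigcap_{\rho\in A'}M_i(\rho)$ contains both $0$ and $d$. By \eqref{eq:Iirho from chords}, each $I_i(\rho_k)$ is an interval, and compatibility with $\overline{i(n+3)}$ and $\overline{1(i+d+1)}$ should force it to avoid both endpoints. If some $I_i(\rho_k)$ contained $0$, then the corresponding chord would have one endpoint at $\le i+1$ and the other inside $(i+1,i+d+1]$, so it would cross $\overline{i(n+3)}$ non-compatibly. The case of $d$ is symmetric. I expect this case check via \eqref{eq:cases I(rho)} to be the main obstacle, together with the bookkeeping for chords through $1$ or $n+3$. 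More generally, I would determine $\bigcap_{A'}M_i$ as an interval-complement containing $\{0,d\}$.

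Now take $x=\alpha e_i+\beta(-e_{i+d-1})+\sum\lambda_k u_{\rho_k}$. Using \eqref{eq:pair with vertex}, the pairing $\langle v_r,e_i\rangle$ equals $1$ for $r\ge1$, and $\langle v_r,-e_{i+d-1}\rangle$ equals $-1$ only for $r=d$. So $\langle v_0,x\rangle-\langle v_d,x\rangle=-\alpha+\beta$, while the $\rho$-part contributes equally at $0$ and $d$ and is minimal there. Consequently $x\in\sigma_0$ iff $\alpha\le\beta$, and $x\in\sigma_d$ iff $\beta\le\alpha$. Intermediate $r$ give values at least $\alpha+\min(\cdot)$, which are not smaller. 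Writing $x$ in terms of $u_i=e_i-e_{i+d-1}$ gives $C_D\cap\sigma_0=\cone(e_i,u_i,\rho_k)$ when $\alpha\ge\beta$ (with the orientation fixed by the sign computation), and symmetrically for $\sigma_d$. For $0<r<d$, membership in $\sigma_r$ forces equality of $\langle v_r,x\rangle$ with the minimum at $0$ or $d$. This cuts out faces of the two cones, so those intersections are faces.

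For (3), note that $\tau_i$ is smooth and all maximal cones containing it are smooth by the inductive hypothesis, using the chord-diagram cones and unimodularity. Then (1) and (2), compared with the definition \eqref{eq:starsub}, show that cone by cone the common refinement equals $\operatorname{Star}_{\tau_i}$. For maximal cones containing $\tau_i$, this uses $\Sigma^*_{C_D}(\tau_i)$, which consists of exactly the two cones in (2), matching Lemma~\ref{lem:starsub chords}. Since both fans are determined by their maximal cones, they coincide.
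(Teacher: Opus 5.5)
Your proposal is correct and is essentially the paper's proof: (1) via Lemma~\ref{lem:common-minimizer}, (2) by writing $x=\alpha e_i+\beta(-e_{i+d-1})+\sum_k\lambda_k u_{\rho_k}$ and comparing $\alpha$ with $\beta$, and (3) by matching with \eqref{eq:starsub}. The one variation is in how you get $\langle v_0,u_{\rho_k}\rangle=\langle v_d,u_{\rho_k}\rangle=\min_r\langle v_r,u_{\rho_k}\rangle$: you derive it from $0,d\in M_i(\rho_k)$, using \eqref{eq:cases I(rho)} and incompatibility, whereas the paper argues through supports of the $u_{\rho_k}$. Your route is the sounder one, since a ray such as $e_i-e_{i+1}$ (the chord $\overline{(i+1)(i+3)}$, present for $d\ge 3$) is compatible with both crossing chords but is supported inside $[i,i+d-1]$.

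There are three slips to fix:
\begin{itemize}
\item $e_i$ is the chord $\overline{(i+1)(n+3)}$, not $\overline{i(n+3)}$.
\item The case check must be run through \eqref{eq:cases I(rho)} rather than \eqref{eq:Iirho from chords}, which as written gives $I_i(e_i)=[0,d]$ instead of $[1,d]$.
\item Since $\langle v_0,x\rangle-\langle v_d,x\rangle=\beta-\alpha$, you get $x\in\sigma_0$ if and only if $\alpha\ge\beta$, not $\alpha\le\beta$.
\end{itemize}
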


\begin{proof}
The maximal cones of $\Sigma(Q_{d,i})$, modulo lineality space, are
\[
\sigma_r=\{w:\langle v_r,w\rangle \le \langle v_t,w\rangle
\text{ for all }0\le t\le d\},
\qquad 0\le r\le d.
\]
That is, $\rho$ lies in $\sigma_r$ if and only if  $r \in M_i(\rho)$.
By Lemma~\ref{lem:common-minimizer} the intersection of all $M_i(\rho)$ for $\rho\in C_D(1)$ is non empty, unless $e_i,-e_{i+d-1}\in C_D(1)$.
The latter pair is the unique compatible pair for which
the two sets of minimizing indices are disjoint. 
So, unless $C_D$ contains these two rays, we can find an $r$ so that $C_D$ lies in $\sigma_r$.

For (2), suppose that $e_i,-e_{i+d-1}\in C_D(1)$.
Every other ray $\rho\in C_D(1)$ is compatible with both of these rays. 
Lemma~\ref{lem:ray compatibility} lists the possible cases for $\rho$. 
In particular, compatibility with $e_i$ and $-e_{i+d-1}$ implies that the ray generator $u_\rho$ lies in the intersection of the sets defined in \eqref{eq:compatbile with e_i} and \eqref{eq:compatbile with -e_j}.
It is therefore supported on $[1,i-1]\cup[i+d,n]$.
Combining with \eqref{eq:pair with vertex} we deduce $\langle v_0,u_\rho\rangle=\langle v_d,u_\rho\rangle=0$ and $\langle v_r,u_\rho\rangle \ge0$ for $1\le r\le d-1$.
Now take an arbitrary point $w\in C_D$. 
It has an expression of form $w = a e_i-be_{i+d-1} +\sum_{\ell=1}^{n-2}c_\ell u_{\rho_\ell}$ where $a,b,c_\ell\ge0$.
In particular, $\langle v_0,w\rangle=0$ and $\langle v_d,w\rangle=a-b$
while 
\[
\langle v_r,w\rangle
=
a+\sum_{\ell=1}^{n-2} c_\ell \langle v_r,u_{\rho_\ell}\rangle \ge a
\quad\text{for }1\le r\le d-1.
\]
Thus the minimum is attained at $r=0$ when $a\ge b$, and at
$r=d$ when $b\ge a$. Therefore
\[
C_D\cap \sigma_0 = \{w\in C_D:a\ge b\}
\quad \text{and} \quad C_D\cap \sigma_d = \{w\in C_D:b\ge a\}.
\]
If $a\ge b$, then $a e_i+b(-e_{i+d-1}) = (a-b)e_i+b(e_i-e_{i+d-1})$,
so $C_D\cap \sigma_0= \cone(e_i,u_i,\rho_1,\ldots,\rho_{n-2})$.
Similarly for $b\ge a$ we have $a e_i+b(-e_{i+d-1}) = a(e_i-e_{i+d-1})
+(b-a)(-e_{i+d-1})$, and hence $C_D\cap \sigma_d = \cone(u_i,-e_{i+d-1}, \rho_1,\ldots,\rho_{n-2})$. This implies (2).

For (3), notice that the two cones $C_D\cap \sigma_d$ and $C_D\cap \sigma_0$ are precisely the two cones obtained by subdividing
$C_D$ along $\tau_i$.
Moreover, by (1) every maximal cone not containing $\tau_i$ is contained in a cone of $\Sigma(Q_{d,i})$ and is therefore unchanged in the common refinement. The same is true in the star subdivision along $\tau_i$.
This implies (3).
\end{proof}

We are now prepared to prove the main result, Theorem~\ref{thm:chord fan}.

\begin{proof}[Proof of Theorem~\ref{thm:chord fan}]
We proceed by induction on $d$ to show that $\Sigma_{d,n}=\mathcal C_{d,n}$. 
    
\medskip
\noindent
    \underline{Induction start:} $\mathcal P_{1,n}$ is a hypercube with rays $\pm e_a$ for $a\in[n]$. Any two rays appear together as generators in a cone except $e_a$ and $-e_a$.
    By \eqref{eq:chords and rays} we obtain the collection of chords $\overline{1(a+2)}$ and $\overline{(a+1)(n+3)}$, $a\in[n]$. 
    The maximal cones of $\Sigma(\mathcal P_{1,n})$ induce chord diagrams with the aforementioned chords in which all chords may cross except $\overline{1(a+2)}$ and $\overline{(a+1)(n+3)}$. 
    This translates the fact that only pairs of chords of form $\overline{1j}$ and $\overline{i(n+3)}$ with $2\le i<j\le n+2$ and $j-i\ge 2$ may cross.
    Hence, every maximal cone in $\Sigma(\mathcal P_{1,n})$ corresponds to a $1$-admissible $n$-chord diagram.

\medskip
\noindent
    \underline{Induction step:} Assume that the maximal cones of $\Sigma_{d-1,n}$ are in bijection with $(d-1)$-admissible $n$-chord diagrams. By definition, $\Sigma_{d,n}$ is obtained from $\Sigma_{d-1,n}$ as the refinement 
    \[
     \Sigma_{d-1,n} \wedge \bigwedge_{i=1}^{n-d+1} \Sigma(Q_{d,i}).
    \]
    We consider the refinement step by step, that is for each $Q_{d,i}$ separately.
    By Lemma~\ref{lem:rays Q}, $\Sigma(Q_{d,i})$ is the normal fan of a $d$-dimensional simplex in $\mathbb R^n$. 
    Its rays are
    \[
    -e_i,e_i-e_{i+1},\dots,e_{i+d-2}-e_{i+d-1},e_{i+d-1}
    \]
    which are in bijection with chords:
    \[
    \overline{1(i+2)},\overline{(i+1)(i+3)},\overline{(i+2)(i+4)},\dots,\overline{(i+d-1)(i+d+1)},\overline{(i+d)(n+3)}.
    \]
By Proposition~\ref{prop:stars} it suffices to show that the refinements $\Sigma_{d-1,n}\wedge \Sigma(Q_{d,i})$ and  $\Sigma_{d-1,n}\wedge \Sigma(Q_{d,j})$ for $i\not=j$ correspond to commuting star subdivisions of $\Sigma_{d-1,n}$.
By Lemma~\ref{lem:commuting stars} it suffices to check that if a cone $\sigma\in\Sigma$ contains two cones $\tau_i$ and $\tau_j$, then $\tau_i(1)\cap \tau_j(1)=\varnothing$.
Proposition~\ref{prop:admissible}(2) shows that if a $(d-1)$-admissible $n$-chord diagram contains two $d$-crossings, then the chords involved are pairwise distinct. 
That is, if a cone $\sigma\in\Sigma_{d-1,n}$ contains two cones corresponding to $(d-1)$-crossings, say $\tau_i$ and $\tau_j$, then their ray sets are disjoint: $\tau_i(1)\cap \tau_j(1)=\varnothing$.
Hence, by Lemma~\ref{lem:commuting stars}
\[
\Star_{\tau_i}\left(\Star_{\tau_j}(\sigma)\right) = \Star_{\tau_j}\left(\Star_{\tau_i}(\sigma)\right).
\]
So the star subdivisions can be performed successively, in arbitrary order, with each star subdivision introducing exactly the one ray of form $e_i-e_{i+d-1}$.
Moreover, iterating the refinement and using the claim for all $i\in[n-d+1]$ we obtain $\Sigma_{d,n}$ as a sequence of star subdivisions at all two dimensional cones corresponding to $d$-crossings.
A cone $C_D$ in $\Sigma_{d-1,n}$ of a chord diagram $D$ containing the $d$-crossing  $\overline{1j}$ and $\overline{i(n+3)}$ is replaced by $C_{D_{ij}(1)}\cup C_{D_{ij}(n+3)}$.
By Lemma~\ref{lem:starsub chords}, these two cones are exactly the corresponding star subdivision of $C_D$.
Hence, every maximal cone in $\Sigma_{d,n}$ corresponds to a $d$-admissible chord diagram. 
This simultaneously concludes the proof of (1) and (2).
\end{proof}

An immediate consequence is the following: 

\begin{Corollary}\label{cor:rays}
    The normal fan of the Catalan pellytope $\Sigma_{d,n}$ has $n+\sum_{j=n-d+1}^n j$ rays generated by
\begin{equation}\label{eq:rays}
    \pm e_i \text{ for }i\in [n] \quad \text{and} \quad e_i-e_{i+k} \text{ for } i\in[n-k],k\in [d-1].
\end{equation}
\end{Corollary}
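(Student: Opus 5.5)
The plan is to read off the rays from Theorem~\ref{thm:chord fan}, or equivalently from the star subdivision description in its proof, and then count them. I would proceed by induction on $d$, following the structure of the proof of Theorem~\ref{thm:chord fan}.

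\emph{Base case $d=1$.} $\Sigma_{1,n}$ is the normal fan of the cube $\square_n$. Its rays are exactly $\pm e_i$ for $i\in[n]$, which gives $2n = n+\sum_{j=n}^{n} j$ rays. The second family in \eqref{eq:rays} is empty, since $k\in[0]=\varnothing$.

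\emph{Inductive step.} By Proposition~\ref{prop:stars}(3) and the commutation argument in the proof of Theorem~\ref{thm:chord fan}, $\Sigma_{d,n}$ is obtained from $\Sigma_{d-1,n}$ by star subdivisions at the cones $\tau_i=\cone(e_i,-e_{i+d-1})$ for $i\in[n-d+1]$. A star subdivision at a cone $\tau$ adds exactly one ray $u_\tau$ and removes none. However, this only happens if $\tau$ is actually a cone of the fan; otherwise the subdivision does nothing. So I must check two things.

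\begin{enumerate}
\item \emph{Each $\tau_i$ is a cone of $\Sigma_{d-1,n}$.} Under \eqref{eq:chords and rays}, $e_i$ and $-e_{i+d-1}$ correspond to the chords $\overline{(i+1)(n+3)}$ and $\overline{1(i+d+1)}$. These form a crossing with $j-i=(i+d+1)-(i+1)=d\ge (d-1)+1$, so they are $(d-1)$-compatible. Any pairwise compatible set of $(d-1)$-chords extends to a maximal one, that is, to a $(d-1)$-admissible diagram $D$. By Theorem~\ref{thm:chord fan}(1) for $d-1$, $\tau_i$ is then a face of $C_D$. Hence each subdivision genuinely adds the ray $u_i=e_i-e_{i+d-1}$.
\item \emph{The new rays are distinct and new.} The vectors $u_i$ are primitive and pairwise distinct. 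None of them occurs among the rays $e_a-e_{a+k}$ with $k\le d-2$ from the inductive hypothesis.
\end{enumerate}

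Together these give the generators in \eqref{eq:rays}, with the step $d$ contributing $n-(d-1)$ new rays. As a cross-check, every $d$-chord lies in some $d$-admissible diagram, so the rays are in bijection with $\mathcal Ch_d$. Under \eqref{eq:chords and rays}, $e_a-e_b$ corresponds to the chord $\overline{(a+1)(b+2)}$, whose length is $b-a+1\le d$; this gives exactly $b-a=k\le d-1$.

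\emph{Counting.} The total number of rays is $2n+\sum_{k=1}^{d-1}(n-k)$. Substituting $j=n-k$ turns this into $n+\bigl(n+\sum_{j=n-d+1}^{n-1} j\bigr)=n+\sum_{j=n-d+1}^{n} j$, as claimed.

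The only step that is more than bookkeeping is the first check: that each $\tau_i$ really is a cone of $\Sigma_{d-1,n}$, so that no subdivision is vacuous. This rests on the extendability of compatible sets to admissible diagrams together with Theorem~\ref{thm:chord fan}(1).
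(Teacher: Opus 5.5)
Your proof is correct and takes essentially the paper's route. The paper states the corollary without proof, as an immediate consequence of Theorem~\ref{thm:chord fan} and its proof, in which each star subdivision at $\tau_i$ introduces exactly the ray $e_i-e_{i+d-1}$; your induction on $d$, your check that each $\tau_i$ really is a cone of $\Sigma_{d-1,n}$, and your count $2n+\sum_{k=1}^{d-1}(n-k)$ simply spell out that argument.
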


\subsection{Properties of chord diagram fans}

In this section, we explore combinatorial and geometric properties of chord diagram fans. 
In particular, we show that $\Sigma_{d,n}$ is obtained from $\Sigma_{d-1,n}$ by an explicit sequence of star subdivisions induced from the chord diagrams.
Moreover, we show that the family of fans $\{\Sigma_{d,n}:n\in\mathbb N, d\le n\}$ is recursively closed under links.

\begin{Lemma}\label{lem:prep flag}
For a collection $D\subset \mathcal Ch_d$, the following are equivalent:
\begin{enumerate}
    \item the chords in $D$ are pairwise $d$-compatible;
    \item $D$ is contained in a $d$-admissible diagram;
    \item the corresponding rays generate a cone of $\Sigma_{d,n}$.
\end{enumerate}
\end{Lemma}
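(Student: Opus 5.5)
We argue around the cycle (1)$\Rightarrow$(2)$\Rightarrow$(3)$\Rightarrow$(1), using Theorem~\ref{thm:chord fan} for the translation between diagrams and cones.

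\emph{(2)$\Rightarrow$(1)} holds by definition: a $d$-admissible diagram is a collection of pairwise $d$-compatible $d$-chords, and so is every subset of it.

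\emph{(1)$\Rightarrow$(2).} The set $\mathcal Ch_d$ is finite. Starting from $D$, we keep adding $d$-chords that are $d$-compatible with every chord already chosen. When no further chord can be added, the collection is a maximal pairwise $d$-compatible set, i.e.\ a $d$-admissible diagram containing $D$.

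\emph{(2)$\Rightarrow$(3).} Suppose $D\subseteq D'$ with $D'$ $d$-admissible. By Theorem~\ref{thm:chord fan}(2), $C_{D'}$ is a maximal cone of $\Sigma_{d,n}$ spanned by the $n$ rays of $D'$. By Proposition~\ref{prop:admissible}(1), $D'$ has exactly $n$ chords, so $C_{D'}$ is simplicial. Since $C_{D'}$ is a full-dimensional cone with $n$ generators, these generators are linearly independent. Hence the rays of $D$ span a face of $C_{D'}$, and faces of cones of a fan are cones of the fan.

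\emph{(3)$\Rightarrow$(1).} This is where the real content lies. Suppose the rays of $D$ generate a cone $\sigma\in\Sigma_{d,n}$. Then $\sigma$ is a face of some maximal cone $C_{D'}$. A face of a simplicial cone is spanned by a subset of its extremal rays. Each ray of $D$ is a ray of $\Sigma_{d,n}$ lying in $\sigma$, and in a simplicial face the only rays of the fan are its extremal rays. So every chord of $D$ lies in $D'$.

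The subtle point is that a ray of the fan lying in $\sigma$ must be one of the generators of $C_{D'}$. To check this, note that if the ray were not a generator, it would be a positive combination of at least two generators. It would then lie in the relative interior of a face of dimension at least $2$, and that is impossible for a ray of a fan. Alternatively, one can appeal directly to Theorem~\ref{thm:chord fan}(1): if a collection of rays spans a cone, the corresponding chords are pairwise $d$-compatible. Either way, the chords of $D$ are pairwise $d$-compatible.
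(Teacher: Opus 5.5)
Your proof is correct. The equivalence (1)$\Leftrightarrow$(2) matches the paper: extend greedily to a maximal compatible set, and note that subsets of admissible diagrams are compatible.

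You reach (3) by a different route. The paper proves (1)$\Rightarrow$(3) by intersecting the cones $C_{D_1},\dots,C_{D_s}$ of all admissible diagrams containing $D$, and asserts ``by construction'' that this intersection is spanned by the rays of $D$. You instead pick a single admissible $D'\supseteq D$ and observe that the rays of $D$ span a face of $C_{D'}$, because $C_{D'}$ is simplicial and full-dimensional. Your argument is shorter and needs only simpliciality. The paper's step tacitly uses that the chords common to all admissible diagrams containing $D$ are exactly those of $D$. That is true, since the fan is complete and so the link of a non-maximal face is a sphere with no cone point, but the paper does not argue it.

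For (3)$\Rightarrow$(1), the paper simply cites Theorem~\ref{thm:chord fan}(1), which is your second option. Your first argument is arguably preferable. You locate $\sigma$ as a face of a maximal cone $C_{D'}$, and use that a ray of a fan cannot lie in the relative interior of a face of dimension $\ge 2$. This derives the implication from Theorem~\ref{thm:chord fan}(2) alone, i.e.\ from the description of maximal cones. That matters because the paper's proof of Theorem~\ref{thm:chord fan} really only establishes the maximal cones, and its part (1) is essentially the content of this lemma.
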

\begin{proof}
By definition, a $d$-admissible diagram is a maximal collection of $d$-compatible chords. Moreover, by Proposition~\ref{prop:admissible}(1), all $d$-admissible diagrams have $n$ chords. 
So, (1) implies (2).
Notice further that (2) implies (1) by definition.

To see that (1) implies (3) let $\{D_1,\dots,D_s\}$ be the set of all maximal collections of $d$-compatible chords containing the rays corresponding to chords in $D$. 
By definition, $D_1,\dots,D_s$ are $d$-admissible.
Moreover, $C_{D_1},\dots,C_{D_s}$ are cones in the fan $\Sigma_{d,n}=\mathcal C_{d,n}$ by Theorem~\ref{thm:chord fan}.
The common intersection of $C_{D_1},\dots,C_{D_s}$ is again a cone in $\Sigma_{d,n}$.
By construction, this cone is generated by the rays corresponding to $D$. 

Lastly, (3) implies (1) by Theorem~\ref{thm:chord fan}.
\end{proof}

Recall the definition of a flag complex from \S\ref{sec:pre}.

The following result for the special case $d=2$ can be found in \cite[Lemma 3.3]{Pellytopes}. The ideas of this result have already appeared in the proof of Theorem~\ref{thm:chord fan}.

\begin{Proposition}\label{prop: star of chord fan}
The fan $\Sigma_{d,n}$ is obtained from $\Sigma_{d-1,n}$ by a successive sequence of star subdivisions that correspond to lifting all $d$-crossings in  $(d-1)$-admissible $n$-chord diagrams.
\end{Proposition}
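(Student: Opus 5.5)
The idea is to reassemble what the proof of Theorem~\ref{thm:chord fan} already established, now stated in terms of chord diagrams. By definition,
\[
\Sigma_{d,n}=\Sigma_{d-1,n}\wedge\bigwedge_{i=1}^{n-d+1}\Sigma(Q_{d,i}).
\]
We refine one simplex at a time. Theorem~\ref{thm:chord fan} applied at level $d-1$ gives $\Sigma_{d-1,n}=\mathcal C_{d-1,n}$, which is exactly the hypothesis of Proposition~\ref{prop:stars}. Part (3) of that proposition then says that $\Sigma_{d-1,n}\wedge\Sigma(Q_{d,i})=\Star_{\tau_i}(\Sigma_{d-1,n})$, where $\tau_i=\cone(e_i,-e_{i+d-1})$. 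Under \eqref{eq:chords and rays}, $\tau_i$ is the two-dimensional cone spanned by the chords $\overline{(i+1)(n+3)}$ and $\overline{1(i+d+1)}$. These chords form a $d$-crossing, and as $i$ ranges over $[1,n-d+1]$ every $d$-crossing arises exactly once.

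Next we need the refinements to be iterable. After the first star subdivision, the fan is no longer $\mathcal C_{d-1,n}$, so Proposition~\ref{prop:stars} cannot simply be reapplied. Instead we argue cone by cone. Fix a maximal cone $C_D$ of $\Sigma_{d-1,n}$. By Proposition~\ref{prop:admissible}(2), the $d$-crossings contained in $D$ use pairwise distinct chords, so the cones $\tau_i\subseteq C_D$ have pairwise disjoint ray sets. Lemma~\ref{lem:commuting stars} then shows that the star subdivisions at these $\tau_i$ commute on $C_D$.

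For the refinement by $\Sigma(Q_{d,j})$ after the star at $\tau_i$, consider a subcone of $\Star_{\tau_i}(C_D)$. It has rays $\rho_1,\dots,\rho_{n-2}$ of $C_D$, together with $u_i$ and one of $e_i$ or $-e_{i+d-1}$. We compute the minimizing sets $M_j$ for these rays. The rays $e_i$, $-e_{i+d-1}$ and $u_i$ are all supported on $[i,i+d-1]$, and $u_i$ is compatible with everything relevant. The analysis of Lemma~\ref{lem:common-minimizer} and Proposition~\ref{prop:stars}(1),(2) then goes through verbatim, because it only uses the minimizing sets of the rays actually present. The outcome is that the refinement by $\Sigma(Q_{d,j})$ is again exactly the star subdivision at $\tau_j$. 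Concretely, we check that $M_j(u_i)$ intersects the common minimizer set whenever $\tau_j\not\subseteq$ the cone. By \eqref{eq:Iirho from chords}, $I_j(u_i)$ corresponds to the chord $\overline{(i+1)(i+d+1)}$. Since $|i-j|\ge1$, this interval cannot cover $[0,d]$ together with compatible ones unless $e_j,-e_{j+d-1}$ are present.

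Finally, we translate into chord diagrams using Lemma~\ref{lem:starsub chords}(2). The star subdivision of $C_D$ at the cone of a $d$-crossing $\overline{1j},\overline{i(n+3)}$ is $C_{D_{ij}(1)}\cup C_{D_{ij}(n+3)}$, which is precisely the lift of that crossing. Performing all of these subdivisions lifts every $d$-crossing of every $(d-1)$-admissible diagram. Cones of diagrams without $d$-crossings are left untouched, and those diagrams are already $d$-admissible by the observation after Definition~\ref{eq:def d-chords}.

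The main obstacle is the second step: justifying that after earlier star subdivisions, each later refinement by $\Sigma(Q_{d,j})$ is still a star subdivision, which requires extending the minimizer computation to include the new rays $u_i$. I would try first to show that $M_j(u_i)\supseteq M_j(e_i)\cap M_j(-e_{i+d-1})$, or more simply that it contains a common minimizer whenever one of $e_i,-e_{i+d-1}$ is present.
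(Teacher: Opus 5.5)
Your outline follows the paper's proof. You apply Proposition~\ref{prop:stars}(3) for each $i$ separately, use Proposition~\ref{prop:admissible}(2) with Lemma~\ref{lem:commuting stars} so the subdivisions can be done in any order, and read each subdivision as a lift via Lemma~\ref{lem:starsub chords}. The point you flag as the main obstacle is real, and the paper's proof also passes over it by simply citing Lemma~\ref{lem:commuting stars}. But your proposal does not settle it either. Lemma~\ref{lem:common-minimizer} is stated only for pairwise $(d-1)$-compatible chords, and $\overline{(i+1)(i+d+1)}$ is not a $(d-1)$-chord, so ``goes through verbatim'' needs an argument. Your covering claim justified by ``$|i-j|\ge 1$'' is asserted, not proved. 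Also, \eqref{eq:Iirho from chords} is off by one as printed: it gives $[s,d]$ rather than $I_i(e_{i+s})=[s+1,d]$. So it is not a safe basis for new minimizer computations.

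You can avoid reapplying Proposition~\ref{prop:stars} to a subdivided fan altogether. Since $\wedge$ is associative, commutative and idempotent,
\[
\Sigma_{d,n}=\bigwedge_{i=1}^{n-d+1}\bigl(\Sigma_{d-1,n}\wedge\Sigma(Q_{d,i})\bigr)=\bigwedge_{i=1}^{n-d+1}\Star_{\tau_i}(\Sigma_{d-1,n}),
\]
so Proposition~\ref{prop:stars} is only ever used on $\Sigma_{d-1,n}=\mathcal C_{d-1,n}$.
\begin{itemize}
\item Fix $C_D$ and write $w\in C_D$ in its ray basis, with $a_j,b_j$ the coefficients of $e_j$ and $-e_{j+d-1}$.
\item By Proposition~\ref{prop:stars}, $\Sigma(Q_{d,j})$ either keeps $C_D$ inside one maximal cone (if $\tau_j\not\subseteq C_D$) or cuts it exactly along $a_j=b_j$.
\item Hence on $C_D$ the common refinement consists of the cones $\{w\in C_D:\epsilon_j(a_j-b_j)\ge 0 \text{ for all } j \text{ with } \tau_j\subseteq C_D\}$, with $\epsilon_j\in\{\pm1\}$.
\item The pairs $\{e_j,-e_{j+d-1}\}$ are pairwise disjoint. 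Moreover, $a_je_j+b_j(-e_{j+d-1})$ equals $(a_j-b_j)e_j+b_ju_j$ or $a_ju_j+(b_j-a_j)(-e_{j+d-1})$ without changing any other coordinate. So these cones are exactly the maximal cones of the iterated star subdivision, as in the proof of Lemma~\ref{lem:commuting stars}.
\end{itemize}

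If you prefer your cone-by-cone route, the needed facts are also elementary. Every cone of $\Star_{\tau_i}(C_D)$ lies in $C_D$, hence in a single $\sigma_r$ of $\Sigma(Q_{d,j})$ whenever $C_D$ does. For $j\ne i$, a direct computation gives $M_j(u_i)=M_j(e_i)$ if $j<i$ and $M_j(u_i)=M_j(-e_{i+d-1})$ if $j>i$, which is the inclusion you were aiming for.
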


\begin{proof}
Notice that $\Sigma_{d-1,n}=\mathcal C_{d-1,n}$ is simplicial. 
So we have by Proposition~\ref{prop:stars} that $\Sigma_{d-1,n}\wedge \Sigma(Q_{d,i})=\operatorname{Star}_{\tau_i}(\Sigma_{d-1,n})$.
We need to show that the successive star subdivisions occur in disjoint subfans.
As we have seen in the proof of Proposition~\ref{prop:stars} a maximal cone $\sigma\in\Sigma_{d-1,n}$ is subdivided in the refinement with $\Sigma(Q_{d,i})$ if and only if it contains $\tau_i=\cone(e_i,-e_{i+d-1})$.
This corresponds to a chord diagram with crossing chords $\overline{(i+1)(n+3)}$ and $\overline{1(i+d+1)}$.
Moreover, the new ray $e_i-e_{i+d-1}$ corresponds to the chord $\overline{(i+1)(i+d+1)}$.
Therefore, the refinement corresponds exactly to the lift of the chord diagrams.
Recall from Proposition~\ref{prop:admissible}(2): if a $(d-1)$-admissible $n$-chord diagram contains $k$ pairs of crossing chords $\overline{1j_p},\overline{i_p(n+3)}$ with $2\le i_p<j_p\le n+2$ and $j_p-i_p=d$ for $p=1,\dots,k$, then all chords involved in these crossings are pairwise distinct. 
Hence, star subdivisions at the corresponding cones $\tau_{i_p}$ pairwise commute by Lemma~\ref{lem:commuting stars}. 
We conclude that
\begin{eqnarray}\label{eq:succesive stars}
    \Sigma_{d,n}=\operatorname{Star}_{\tau_1}(\operatorname{Star}_{\tau_2}(\cdots \operatorname{Star}_{\tau_{n-d+1}}(\Sigma_{d-1,n})\cdots)).
\end{eqnarray}
\end{proof}

\begin{Corollary}\label{cor:smooth flag}
$\Sigma_{d,n}$ is smooth, in particular simplicial, and flag.
\end{Corollary}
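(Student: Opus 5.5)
We argue by induction on $d$, using the description of $\Sigma_{d,n}$ as an iterated star subdivision from Proposition~\ref{prop: star of chord fan} for smoothness, and Lemma~\ref{lem:prep flag} for flagness.

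\emph{Smoothness.} The base case is $d=1$. Here $\Sigma_{1,n}$ is the normal fan of the cube $\square_n$. Its maximal cones are $\cone(\epsilon_1 e_1,\dots,\epsilon_n e_n)$ with $\epsilon_a\in\{\pm1\}$, and these generators clearly form a lattice basis.

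For the induction step, assume $\Sigma_{d-1,n}$ is smooth. By \eqref{eq:succesive stars}, $\Sigma_{d,n}$ is obtained from $\Sigma_{d-1,n}$ by a sequence of star subdivisions along the two-dimensional cones $\tau_i=\cone(e_i,-e_{i+d-1})$. The key observation is that a star subdivision along a cone $\tau$ whose containing maximal cones are smooth again produces smooth cones. Indeed, let $\sigma=\cone(v_1,\dots,v_n)$ be a lattice basis cone with $\tau=\cone(v_1,v_2)$. The new maximal cones are $\cone(v_1+v_2,v_2,v_3,\dots,v_n)$ and $\cone(v_1,v_1+v_2,v_3,\dots,v_n)$. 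Each is obtained from the basis $(v_1,\dots,v_n)$ by a unimodular column operation, so each is again a lattice basis.

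We must also check that the subdivisions stay applicable. The subdivisions commute and act on disjoint pairs of rays (Proposition~\ref{prop:admissible}(2) and Lemma~\ref{lem:commuting stars}). So after performing some of them, every maximal cone containing a remaining $\tau_j$ is still smooth, and the next subdivision is legitimate. Alternatively, one can check smoothness directly. By Proposition~\ref{prop:stars}(2), each maximal cone is $\cone(e_i,u_i,\rho_1,\dots,\rho_{n-2})$ or $\cone(u_i,-e_{i+d-1},\rho_1,\dots,\rho_{n-2})$, where $u_i=e_i-e_{i+d-1}$. Both arise from $\cone(e_i,-e_{i+d-1},\rho_1,\dots,\rho_{n-2})$ by adding one generator to another, so the determinant is preserved. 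Smoothness implies simpliciality.

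\emph{Flagness.} Take the simplicial complex $\Delta$ on the ray set whose faces are the sets of rays spanning cones of $\Sigma_{d,n}$. Condition (ii) holds since every ray is a cone. For condition (iii), suppose rays $\rho_1,\dots,\rho_r$ pairwise span cones. By Lemma~\ref{lem:prep flag}, (3)$\Rightarrow$(1), the corresponding chords are pairwise $d$-compatible. Then by Lemma~\ref{lem:prep flag}, (1)$\Rightarrow$(3), the whole collection spans a cone of $\Sigma_{d,n}$. So $\Delta$ is flag.

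\emph{Main obstacle.} The only delicate point is that Lemma~\ref{lem:prep flag}, (1)$\Rightarrow$(3), really requires the intersection of the cones $C_{D_i}$ to be exactly the cone spanned by the rays of $D$. This needs the fan to be simplicial, so that faces of a maximal cone correspond to subsets of its rays. That is supplied by the smoothness established first, which is why we prove smoothness before flagness.
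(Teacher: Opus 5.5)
Your proposal is correct and follows the paper's own argument. Smoothness is propagated from the cube $\Sigma_{1,n}$ through the star subdivisions along $\tau_i=\cone(e_i,-e_{i+d-1})$, where you make the unimodular column operation explicit, and flagness comes from combining (3)$\Rightarrow$(1) and (1)$\Rightarrow$(3) of Lemma~\ref{lem:prep flag}. As a minor aside, your ``main obstacle'' is not really an obstacle. Simpliciality is already contained in Theorem~\ref{thm:chord fan}(2), since every maximal cone is full-dimensional and spanned by exactly $n$ rays. So (1)$\Rightarrow$(3) only needs that $D$ sits inside some $d$-admissible $D_1$, whose rays then span a face of $C_{D_1}$; no intersection computation and no prior smoothness result is required.
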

\begin{proof}
By Proposition~\ref{prop:stars}, every cone $C_D$ is obtained as a sequence of star subdivisions along cones of form $\tau_i$ from a maximal cone in $\Sigma_{1,n}$.
As all maximal cones in $\Sigma_{1,n}$ are smooth, and also all $\tau_i$ are smooth, so is $C_D$.
Therefore $\Sigma_{d,n}$ is smooth.

It is left to show that $\Sigma_{d,n}$ is flag: that is, for any given set of rays $\{\rho_1,\dots,\rho_r\}$ with the property that every $\{\rho_i,\rho_j\}$ with $1\le i<j\le r$ generates a cone in $\Sigma_{d,n}$, then also $\{\rho_1,\dots,\rho_r\}$ generates a cone in $\Sigma_{d,n}$. 
The collection of rays $\{\rho_1,\dots,\rho_r\}$ corresponds to a collection $\sigma$ of $d$-compatible chords in the $(n+3)$-gon as $\Sigma_{d,n}=\mathcal C_{d,n}$.
By Lemma~\ref{lem:prep flag} $\{\rho_1,\dots,\rho_r\}$ generate a cone.
\end{proof}

From now on we identify rays of $\Sigma_{d,n}$ with chords in $\mathcal Ch_d$ to simplify notation.
Lastly in this section we explore the local structure of the chord diagram fan. 
In fact, the recursive decomposition of the associahedron along a diagonal persists throughout the entire Catalan pellytope family. 
Recall the notion of link from \S\ref{sec:pre} and, for convention, set $\Sigma_{0,0}=\{\{0\}\}$.

\begin{Proposition}\label{prop:link}
    Let $\overline{ij}\in\mathcal Ch_d$. Then $ \link_{\Sigma_{d,n}}(\overline{ij})=\Sigma_{d_1,n_1}\times \Sigma_{d_2,n_2}$ with $n_1,n_2\ge 0$ such that $n_1+n_2=n-1$ and for each $i\in\{1,2\}$ either $(d_i,n_i)=(0,0)$ or $d_i\in[n_i]$ with $1\le d_1+d_2\le n-1$.
\end{Proposition}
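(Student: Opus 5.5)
We will work entirely on the combinatorial side. By Theorem~\ref{thm:chord fan} and Lemma~\ref{lem:prep flag}, $\Sigma_{d,n}$ is the flag simplicial complex on $\mathcal Ch_d$ with faces the pairwise $d$-compatible collections. Hence $\link_{\Sigma_{d,n}}(\overline{ij})$ is the flag complex on the $d$-chords $d$-compatible with and distinct from $\overline{ij}$. The chord $\overline{ij}$ cuts the $(n+3)$-gon into two polygons: $P_1$ on the vertices $i,i+1,\dots,j$ and $P_2$ on $j,\dots,n+3,1,\dots,i$. These have $n_1+3$ and $n_2+3$ vertices with $n_1=j-i-2$ and $n_2=n+1-(j-i)$, so $n_1+n_2=n-1$.

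The key case split is whether $\overline{ij}$ takes part in an allowed crossing. First suppose $\overline{ij}=\overline{ab}$ with $2\le a<b\le n+2$ and $b-a\le d$. Then $\overline{ij}$ crosses nothing compatibly, so every compatible chord lies in $P_1$ or $P_2$. A chord in $P_1$ is never of the form $\overline{1\ast}$ or $\overline{\ast(n+3)}$, and all its chords have length $\le b-a\le d$. So every diagonal of $P_1$ is a $d$-chord and compatibility is non-crossing. This gives $\link$-factor $\Sigma_{n_1,n_1}$, the associahedron fan, and we take $(d_1,n_1)=(0,0)$ if $n_1=0$.

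For $P_2$, relabel its vertices cyclically so that $1$ stays $1$: the vertices $1,\dots,a$ are followed by $b,\dots,n+3$, and the gap of $b-a-1$ vertices is collapsed. Under this relabelling $n+3$ becomes $n_2+3$. A chord $\overline{pq}$ with $p\le a<b\le q$ shrinks in length by $b-a-1$, while other chords keep their length. I would then check chord by chord that the $d$-chords in $P_2$ are exactly the $d_2$-chords of the relabelled polygon, and that the allowed crossings $\overline{1q},\overline{p(n+3)}$ with $q-p\ge d+1$ become those with $q'-p'\ge d_2+1$, where $d_2=d-(b-a-1)$ (capped at $n_2$). The chords crossing the gap are exactly those with $p\le a$ and $q\ge b$, so the shift is uniform, and this is what makes the relabelling work. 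It is also why "$d$ is redistributed": $d_1=n_1=b-a-2$ and $d_2=d-b+a+1$, so $d_1+d_2=d-1\le n-1$. We need $d_1+d_2\ge 1$ except in degenerate cases, so I expect to handle small cases such as $d=1$ separately; these are boundary chords of the hypercube, where the link is $\Sigma_{1,n-1}$.

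Now suppose $\overline{ij}=\overline{1q}$, or symmetrically $\overline{p(n+3)}$. Here $\overline{1q}$ is also compatible with the crossing chords $\overline{p(n+3)}$ with $q-p\ge d+1$, so the link is not literally split by the chord. I would construct the bijection explicitly instead. $P_1$ has vertices $1,2,\dots,q$; any $d$-chord there not through $1$ has length $\le d$. The chords $\overline{p(n+3)}$ crossing $\overline{1q}$ compatibly should be re-identified with chords $\overline{pq}$ of $P_1$ "ending at the new last vertex $q$". In $g$-vector terms this is the projection along $-e_{q-2}$: the ray $e_{p-1}$ is sent to $e_{p-1}-e_{q-2}$ modulo the line. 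After this identification, $P_1$ carries a $d_1$-chord structure with $n_1=q-3$, $d_1=\min(d,n_1)$, and $P_2$ (vertices $q,\dots,n+3,1$) is an associahedron-type factor. Alternatively, I would use the cyclic symmetry to reduce this case to the previous one, but I do not think $\Sigma_{d,n}$ has that symmetry, so the explicit map is needed.

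The main obstacle is this second case: checking that compatibility among the re-identified chords matches $d_1$-compatibility. In particular, a chord $\overline{p(n+3)}$ with $q-p\ge d+1$ becomes $\overline{pq}$ of length $\ge d+1$, which is exactly not a $d_1$-chord. So I expect the correct factor on the $P_1$ side to instead correspond to the crossing structure, with $P_1$ relabelled so that $q$ plays the role of $n_1+3$. I would verify this with Lemma~\ref{lem:ray compatibility}(2), comparing its lists of rays compatible with $-e_{q-2}$ with the ray list of Corollary~\ref{cor:rays} for the smaller fans. Finally, since both sides are flag (Corollary~\ref{cor:smooth flag}, and products of flag complexes are flag), it suffices to match vertices and pairwise compatibility; this reduces the whole proof to these pairwise checks.
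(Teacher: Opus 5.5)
You follow exactly the paper's case (1) for an interior chord $\overline{ab}$: the inner polygon gives an associahedron fan, and the outer polygon, after collapsing the $b-a-1$ vertices between $a$ and $b$, should carry $\Sigma_{d_2,n_2}$ with $d_2=d-(b-a-1)$. The step you defer (``check chord by chord'') is where this breaks, and your own sentence shows why. Only chords with $p\le a<b\le q$ shrink. Chords with both endpoints in $\{2,\dots,a\}$, or both in $\{b,\dots,n+2\}$, keep their length, which can be as large as $d>d_2$. Likewise, an allowed crossing $\overline{1q},\overline{p(n+3)}$ with $p<q\le a$ (or $b\le p<q$) keeps the threshold $q-p\ge d+1$, while straddling pairs get the threshold $d_2+1$. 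So the shift is not uniform. The outer polygon carries a position-dependent length bound, i.e.\ a Dyck-path fan $\Sigma_\gamma$ as in Corollary~\ref{cor:Dyck fans}, in general not one of the $\Sigma_{d',n_2}$. Your identification only works when no one-sided chord is longer than $d_2$, e.g.\ for $d=n$.

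This gap cannot be closed: the statement itself fails for such chords. The paper's proof asserts the same value $d_1=d-(j-i)+1$ and has the same flaw. Take $n=5$, $d=3$ and $\overline{57}$ (the ray $e_4-e_5$). $\Sigma_{3,5}$ has $17$ rays, and the $3$-chords crossing $\overline{57}$ are $\overline{16},\overline{36},\overline{46},\overline{68}$, so the link has $12$ vertices. By Corollary~\ref{cor:rays}, $\Sigma_{d',n'}$ has $n'+\sum_{k=n'-d'+1}^{n'}k$ rays. For $n_1+n_2=4$ the products permitted by the statement have $8,9,10,11,13$ or $14$ rays, never $12$. Your relabelled heptagon ($7\mapsto 6$, $8\mapsto 7$) shows both defects. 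Here $d_2=2$, yet $\overline{25}$ is present. Also $\overline{15},\overline{27}$ (formerly $\overline{15},\overline{28}$) are incompatible although $5-2\ge d_2+1$. What is true in this case is a product of $\Sigma_{n_1,n_1}$ with a Dyck-path fan.

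For a boundary chord $\overline{1q}$, your final idea is the right one. Identify each compatible crossing chord $\overline{p(n+3)}$ ($q-p\ge d+1$) with a chord through $q$, and let $q$ play the role of $n_1+3$. Together with the genuine chords $\overline{pq}$ ($q-p\le d$), this gives $\Sigma_{\min(d,n_1),n_1}$ on the side $1,\dots,q$. These crossing chords are compatible with everything on the other side, since $r-p>q-p\ge d+1$ for $r>q$.

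However, the side $\{q,\dots,n+3,1\}$ is not an associahedron factor. It contains both $1$ and $n+3$, so the crossings $\overline{1r},\overline{p(n+3)}$ with $q\le p<r$ survive. The uniform relabelling $v\mapsto v-q+2$ shows this factor is $\Sigma_{\min(d,n_2),n_2}$. For $d=1$ every link is a cube fan, which already rules out an associahedron factor of dimension $\ge2$. With these values the boundary case of the statement holds. The paper's case (2) instead sets $d_1=n_1$ on the side $1,\dots,q$, which fails for the same $d=1$ reason.
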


The precise values of $n_1,n_2,d_1,d_2$ depend on the chord $\overline{ij}$ and are given explicitly in the proof.

\begin{proof}
The chord $\overline{ij}$ cuts the $(n+3)$-gon into two subpolygons: an $(n_1+3)$-gon and an $(n_2+3)$-gon.
We need to show that any $d$-admissible $n$-chord diagram in the $(n+3)$-gon containing $\overline{ij}$ induces $d_k$-admissible chord diagrams in the subpolygons for $k\in\{1,2\}$.
We distinguish three cases for $\overline{ij}$ and analyze which $d$-admissible chord diagram contains this chord. 

\begin{enumerate}
\item \underline{$i>1,j<n+3$:} In this case no chords may cross $\overline{ij}$. 
The chord $\overline{ij}$ splits the $(n+3)$-gon into two polygons, one with vertices $1,\dots,i,j,\dots,n+3$ so $n_1+3=n+3-j+i+1$ and one with vertices $i,i+1,\dots,j-1,j$ so that $n_2+3=j-i+1$.
In the first case, $d$ is diminished by the distance of $i$ and $j$, that is $d_1=d-(j-i)+1$.
In the second case, we don't have any crossing chords as the subpolygon does not contain the original vertices $1$ and $n+3$. 
Hence, $d_2=n_2$. 
In particular, $n_1+n_2=n-1$ and $d_1+d_2=d-1$.

\item \underline{$i=1$:} In this case the chord $\overline{1j}$ splits the $(n+3)$-gon into a $j$-gon with vertices $1,\dots,j$ and a polygon with vertices $j,\dots,n+3,1$.
In particular, $n_1=j-3$ and $n_2=n-j+2$ so that $n_1+n_2=n-1$.
Consider a chord $\overline{i'(n+3)}$ with $i'<j$ that crosses $\overline{1j}$.
We identify it with the chord $\overline{i'j}$ in the $j$-gon. 
This way any $d$-admissible chord diagram containing such a crossing induces a triangulation of the $(n_1+3) $-subpolygon. 
Hence, there are no crossings in the induced $(n_1+3)$-gon so that $d_1=n_1$.

In the $(n_2+3)$-gon we might have crossings that do not involve the chord $\overline{1j}$ as this subpolygon contains the vertices $1$ and $n+3$. This implies that the induced chord diagrams are $d$-admissible, so $d_2=\min\{d,n_2\}$.  
\item \underline{$j=n+3$:} This chord splits the $(n+3)$-gon into a subpolygon with vertices $i,i+1,\dots,n+3$ and a second one with vertices $1,\dots,i,n+3$.
So we have $n_1=n-i+1$ and $n_2=i-2$. 
Whenever a $d$-admissible chord diagram that contains $\overline{i(n+3)}$ further contains crossing chords $\overline{1j'}$ and $\overline{i'(n+3)}$ the indices satisfy $j'>i'\ge i$. 
We replace the chord $\overline{1j'}$ by $\overline{i'j'}$ so that the $n_1+3$-gon does not contain any crossing chords. In particular, $d_1=n_1$.

As we have already assigned chords of form $\overline{1j'}$ to chords $\overline{i'j'}$ for $j'>i'\ge i$ the $n_2+3$-subpolygon may only contain crossings of form $\overline{1\ell}$ and $\overline{k(n+3)}$ with $k<\ell<i$. This implies that $d_2=\min\{d,n_2\}$.
Hence, $n_1+n_2=n-1$ and $d_1+d_2\le n-1$. 
\end{enumerate}
\end{proof}

\subsection{Number sequences counting vertices in Catalan pellytopes}\label{sec:numbers}
The aim of this section is to count the number of vertices of Catalan pellytopes. As this family of polytopes contains a few well-known classes of polytopes-- pellytopes and associahedra-- we start by recalling the number sequences associated to their vertex counts.

Pell numbers are defined through a three-term recurrence relation similar to Fibonacci numbers.
The {\bf $n^{\text{th}}$ Pell number} is defined through the recurrence relation
  \begin{equation}\label{eq:pellrec}
P_0=0, \quad P_1=1,\quad \text{and } \quad P_n=2P_{n-1}+P_{n-2}. 
\end{equation}
Pell numbers appear in the Online Encyclopedia for Integer Sequences as \cite[A000129]{OEIS}.
Some of the first Pell numbers are displayed in the second column of Table~\ref{tab:S_n^d}.
By \cite{Pellytopes} Pell numbers count the number of vertices of the pellytope $\mathcal P_{2,n}$.
The recurrence relation for Pell numbers can be solved yielding the closed expression
\begin{equation*}
    P_n = \frac{(1+\sqrt{2})^n-(1-\sqrt{2})^n}{2\sqrt{2}}\,.
\end{equation*}
Moreover, a generating function for Pell numbers $P(x) := \sum_{n=0}^\infty P_n x^n$ is known
$P(x) =  \frac{1}{1-2x-x^2}$.  

We now move to the associahedron, $\mathcal P_{n,n}$, whose number of vertices is counted by Catalan numbers. 
The {\bf $n^{\text{th}}$ Catalan number} is defined as
  \begin{equation}\label{eq:def Catalan}
    C_n
    =
    \frac{1}{1+n} {2n \choose n}\,.
  \end{equation}
  As a convention set $C_{-1}:=-1$.
Catalan numbers satisfy a quadratic $(n+1)$-term recurrence relation
\begin{equation*}
  C_n =  \sum_{k=1}^n C_{k-1}C_{n-k}
\end{equation*}
The generating function for Catalan numbers $C(x) := \sum_{n=0}^\infty C_n x^n$ is given by $C(x) = \frac{1-\sqrt{1-4x}}{2x}$.

As the Catalan pellytopes interpolate between the pellytope and the associahedron we obtain number sequences interpolating between Pell and Catalan numbers. 
Define $S^{(d)}_n$ as the number of $d$-admissible chord diagrams in an $(n+3)$-gon.

\begin{Example}\label{exp:number sequences}
We already know three special cases:
\begin{enumerate}
    \item[d=1] As $\mathcal P_{1,n}$ is the hypercube we have $S_n^{(1)}=2^n$. 
    \item[d=2] The polytope $\mathcal P_{2,n}$ is the pellytope, so $S_n^{(2)}=P_{n+1}$ by \cite[Corollary 3.2]{Pellytopes}. 
    \item[d=n] As $\mathcal P_{n,n}$ is the associahedron, we have $S_{n}^{(n)}=C_{n+1}$ for all $n$.
\end{enumerate}
\end{Example}

For $d=3$ the sequence $S_n^{(3)}$ coincides with \cite[A077938]{OEIS}, defined as the expansion of $\frac{1}{1 - 2x - x^2 - 2x^3}$.
Corollary~\ref{cor:generating function} below shows why this is the case. 
For $d\ge 4$ there are no entries in OEIS \cite{OEIS} that coincide with $S_n^{(d)}$.
Interestingly, it is mentioned in \cite[A077938]{OEIS} that this sequence can be viewed as the second member of an infinite family of sequences, with Pell numbers as the first, where subsequent sequences have longer recurrences. 
However, these do not coincide with our sequences.

\begin{Proposition}\label{prop:recursion}
The numbers $S_n^{(d)}$ for $n\in\mathbb N$ and $d\le n$ satisfy the following recurrence relation
\begin{equation}\label{eq:recursion}
  S^{(d)}_n = 2 S^{(d)}_{n-1} + \sum_{k=1}^{d-1} C_k S^{(d)}_{n-k-1}
\end{equation}
\end{Proposition}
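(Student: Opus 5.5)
The plan is to count $d$-admissible diagrams directly, sorting them by what happens at the boundary edge $\overline{12}$ of the $(n+3)$-gon. I will show that every $d$-admissible $n$-chord diagram $D$ falls into exactly one of $d+1$ disjoint classes. The first two classes are those containing $\overline{13}$ and those containing $\overline{2(n+3)}$. The remaining $d-1$ classes are those containing neither of these chords, and they are indexed by $j\in[4,d+2]$. First, the two chords $\overline{13}$ and $\overline{2(n+3)}$ cross and correspond to $-e_1$ and $e_1$, so they are never $d$-compatible; hence the first two classes are disjoint. Next, suppose $D$ contains neither chord. Then I claim $D$ contains the triangle $1,2,j$, meaning both chords $\overline{1j}$ and $\overline{2j}$, for a unique $j\in[4,d+2]$. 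To see this, note that the only $d$-chords at vertex $2$ are $\overline{2j}$ with $j\le d+2$ and $\overline{2(n+3)}$. Take the largest $j$ with $\overline{2j}\in D$, or $j=3$ if there is none. Maximality of $D$ then forces $\overline{1j}\in D$: the chord $\overline{1j}$ crosses no chord at vertex $2$ of larger index, and any chord crossing it must have an endpoint strictly between $2$ and $j$. Such a chord cannot be of the form $\overline{i(n+3)}$, since it would then cross $\overline{2j}$, and this crossing is not of the allowed type. The case $j=3$ is exactly the class of $\overline{13}$.

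Next I count each class. For the class of the triangle $1,2,j$ with $4\le j\le d+2$, the diagram splits into two parts. The first is a triangulation of the polygon on the vertices $2,\dots,j$; this polygon contains neither $1$ nor $n+3$, so no crossings are possible, and there are $C_{j-3}$ such triangulations. The second is a diagram on the polygon $1,j,j+1,\dots,n+3$, which is an $((n+2-j)+3)$-gon. I will check that the $d$-compatibility rules restrict correctly to this second polygon: the relevant chords are $\overline{1\ell}$ and $\overline{i(n+3)}$ with $i\ge j$, and the differences $\ell-i$ are unchanged after relabelling. Hence the second part is exactly a $d$-admissible diagram there, with $S^{(d)}_{n+2-j}$ choices. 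Setting $k=j-3\in[1,d-1]$, this class contributes $C_kS^{(d)}_{n-k-1}$ diagrams. The class of $\overline{13}$ is in bijection with $d$-admissible diagrams of the $(n+2)$-gon obtained by deleting vertex $2$. After relabelling, every chord index drops by one, so both the $d$-chord conditions and the crossing rule $j-i\ge d+1$ are preserved. This class therefore contributes $S^{(d)}_{n-1}$.

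The main obstacle is the class of $\overline{2(n+3)}$, because here chords $\overline{1\ell}$ may cross $\overline{2(n+3)}$ when $\ell\ge d+3$, so there is no triangle on the edge $\overline{12}$. My plan is to remove the chord $\overline{2(n+3)}$ and contract the edge $\overline{12}$, merging vertices $1$ and $2$ into a single new vertex $1$ of an $(n+2)$-gon. Under this map, each chord $\overline{2\ell}$ with $\ell\le d+2$ becomes $\overline{1\ell}$. The chords $\overline{1\ell}$ already in $D$ all have $\ell\ge d+3$, since shorter ones would form a forbidden crossing with $\overline{2(n+3)}$, so the map creates no collisions. Chords $\overline{i(n+3)}$ with $i\ge3$ and interior chords keep their relative positions. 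I then need to verify two things, separately for crossings involving an image of a $\overline{2\ell}$ chord and for crossings among the remaining chords. The first is that $d$-compatibility is preserved in both directions, using that the crossing condition $\ell-i\ge d+1$ is invariant under the shift of indices by one. The second is that maximality is preserved, so that admissible diagrams go to admissible diagrams. The inverse map splits the new vertex $1$ back into $1$ and $2$, sending chords $\overline{1\ell}$ with $\ell\le d+2$ (old labels) to vertex $2$ and adding back $\overline{2(n+3)}$; I will check that it produces a $d$-admissible diagram, which shows this class also has $S^{(d)}_{n-1}$ elements. Summing the $d+1$ contributions gives \eqref{eq:recursion}. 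As a sanity check I will compare with the cases $d=1,2$ from Example~\ref{exp:number sequences}.
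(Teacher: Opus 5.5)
Your proposal is correct and follows the paper's proof. Both split into the same three cases, according to whether $\overline{13}\in D$, $\overline{2(n+3)}\in D$, or neither. Both count them the same way: cutting off the triangle $1,2,3$ and contracting the edge $\overline{12}$ each give $S^{(d)}_{n-1}$, and splitting along the triangle $1,2,j$ with $4\le j\le d+2$ gives $C_kS^{(d)}_{n-k-1}$. The differences are cosmetic: you locate $j$ as the largest index with $\overline{2j}\in D$ and deduce $\overline{1j}\in D$ by maximality, whereas the paper takes the smallest $j$ with $\overline{1j}\in D$; you also treat $d=1,2$ uniformly rather than appealing to the known formulas, and you verify the $\overline{2(n+3)}$ bijection more carefully than the paper does.
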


The result follows from the recursive patterns satisfied by chord diagrams that we already used in the proof of Proposition~\ref{prop:link}. 
More precisely, to prove the recursion we will count the number of cases of type (1) and (2) in the proof of Proposition~\ref{prop:link}.

\begin{proof}[Proof of Proposition~\ref{prop:recursion}]
The cases $d=1$ and $d=2$ follow from Example~\ref{exp:number sequences}. For $d=1$ we have $S_n^{(1)}=2^n=2S_{n-1}^{(1)}$.
For $d=2$ using \eqref{eq:pellrec} we have
\[
S_n^{(2)}=P_{n+1}=2P_{n}+P_{n-1}=2S_{n-1}^{(2)}+C_1S_{n-2}^{(2)}.
\]
So we may assume $d\ge 3$. Let $D$ be a $d$-admissible $n$-chord diagram.
Observe that $\overline{13}$ and $\overline{2(n+3)}$ are not $d$-compatible. This yields three cases: (1) $\overline{13}\in D$; (2) $\overline{2(n+3)}\in D$; or (3) $\overline{13},\overline{2(n+3)}\not\in D$.

\medskip\noindent
{\it Case (1):} Assume $\overline{13}\in D$. As $\overline{2(n+3)}\not\in D$ there is no chord in $D$ crossing $\overline{13}$.
We cut off the triangle with vertices $1,2,3$ and get an $(n+2)$-gon with vertices $1,3,4,\dots,n+3$. After relabeling to $1,2,\dots,n+2$ we are left with a $d$-admissible $(n-1)$-chord diagram. Hence, 
\[
\vert\{D\in\mathcal C_{d,n}:\overline{13}\in D\}\vert=S_{n-1}^{(d)}.
\]

\medskip\noindent
{\it Case (2):} Assume $\overline{2(n+3)}\in D$. 
This is a special case of Case (2) in the proof of Proposition~\ref{prop:link}. We claim there is a bijection
\[
\{D\in\mathcal C_{d,n}:\overline{2(n+3)}\in D\} \leftrightarrow \{D'\in\mathcal C_{d,n-1}\}
\]
Given $D$, we construct $D'$ by contracting the $\overline{12}$ boundary segment. Relabel the vertices by $1,2,\dots,n+2$.
If $D$ contains crossing chords $\overline{i(n+3)}$ and $\overline{1j}$ they correspond to crossing chords $\overline{(i-1)(n+2)}$ and $\overline{1(j-1)}$ in $D'$ which remain $d$-compatible.
For the reverse, given $D'\in\mathcal C_{d,n-1}$ add a triangle to the boundary segment $\overline{1(n+2)}$ and relabel accordingly.
We deduce,
\[
\vert\{D\in\mathcal C_{d,n}:\overline{2(n+3)}\in D\}\vert=S_{n-1}^{(d)}.
\]
{\it Cases} (1) and (2) together account for the term $2S_{n-1}^{(d)}$ in the recursion \eqref{eq:recursion}.

\medskip\noindent
{\it Case (3):} Assume neither $\overline{13}$ nor $\overline{2(n+3)}$ are contained in $D$. 
Then $D$ must contain a chord of form $\overline{1j}$ as otherwise we may add $\overline{2(n+3)}$.
Let $\overline{1j}\in D$ be such that $j$ is minimal among all $\ell$ with $\overline{1\ell}\in D$.
As $\overline{13}\not \in D$ we have $j\ge 4$.
We claim $j\le d+2$.
Assume to the contrary that $j\ge d+3$. Then every chord $\overline{1\ell}\in D$ satisfies $\ell-2\ge j-2\ge d+1$.
Hence, all chords of form $\overline{1\ell}\in D$ are compatible with $\overline{2(n+3)}$, so we can add it to $D$. However, this contradicts the maximality of $D$.
Therefore, $4\le j\le d+2$. 

Set $k=j-3$ so that $1\le k\le d-1$. 
This is the index of the sum in the recursion \eqref{eq:recursion}.
We now proceed as in Case (2) in the proof of Proposition~\ref{prop:link}.
Consider the subpolygon with vertices $1,2,\dots,j$. As it does not contain the vertex $n+3$ there are no crossings inside this polygon.
Therefore the subpolygon is triangulated.
Moreover, it does not contain any of the chords $\overline{13},\overline{14},\dots,\overline{1(j-1)}$.
Therefore, it must contain the chord $\overline{2(j-1)}$.
Hence, it is equivalent to a triangulation of a $(k+2)$-gon with vertices $2,3,\dots,j$. So there are $C_k$ possibilities. 

What is left to show is how to recover the factor $S_{n-k-1}^{(d)}$. Consider the subpolygon with vertices $1,j,j+1,\dots,n+3$. It has $1+(n+3-j+1)=n-j+5=n-k+2$ vertices. 
As $k=j-3$ it is therefore an $(n-k+3)$-gon. 
Note that as in {\it Case (2)}, $d$-compatibility is preserved under relabeling of the vertices and hence there are $S_{n-k-1}^{(d)}$ possibilities.

Summarizing all three cases we obtain the desired recursion formula.
\end{proof}

We use \eqref{eq:recursion} to give a generating function for $S_{n}^{(d)}$. For this purpose, set $S_0^{(d)}=1$ and $S_n^{(d)}=0$ for $n<0$.

\begin{Corollary}\label{cor:generating function}
The numbers $S^{(d)}_n$ are determined by the generating function $S^{(d)}(x) = \sum_{n=0}^\infty S^{(d)}_nx^n$:
\begin{equation}\label{eq:genfunc}
S^{(d)}(x) = \frac{1}{1-2x- \sum_{k=1}^{d-1}C_{k} x^{k+1}}.
\end{equation}
\end{Corollary}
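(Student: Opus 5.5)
We convert the recursion of Proposition~\ref{prop:recursion} into an identity of formal power series, using the conventions $S_0^{(d)}=1$ and $S_n^{(d)}=0$ for $n<0$. Multiply \eqref{eq:recursion} by $x^n$ and sum over all $n\ge 1$. Because $S^{(d)}_m$ vanishes for negative $m$, the right-hand side becomes
\[
2x\,S^{(d)}(x)+\sum_{k=1}^{d-1}C_k x^{k+1}S^{(d)}(x).
\]
The left-hand side is $S^{(d)}(x)-S_0^{(d)}=S^{(d)}(x)-1$. Rearranging gives
\[
S^{(d)}(x)\Big(1-2x-\sum_{k=1}^{d-1}C_kx^{k+1}\Big)=1.
\]
The bracketed series has constant term $1$, so it is invertible in $\mathbb Z[[x]]$, and \eqref{eq:genfunc} follows.

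The main obstacle is justifying that \eqref{eq:recursion} holds for every $n\ge 1$ with these conventions, not only in the range $d\le n$ where Proposition~\ref{prop:recursion} is stated. Here $S_n^{(d)}$ for $n<d$ must be read as the number of $\min\{d,n\}$-admissible diagrams, so $S_n^{(d)}=C_{n+1}$.

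\textbf{Small $n$.} For $1\le n\le d$ the recursion reads
\[
C_{n+1}=2C_n+\sum_{k=1}^{n-1}C_kC_{n-k}.
\]
The terms with $k\ge n$ drop out: at $k=n$ the factor is $S_{-1}^{(d)}=0$, and beyond that the indices are negative. This is the Catalan convolution $C_{n+1}=\sum_{k=0}^{n}C_kC_{n-k}$ with the two boundary terms $k=0$ and $k=n$ combined into $2C_n$. Equivalently, one can note that the proof of Proposition~\ref{prop:recursion} never used $d\le n$, provided chord-lengths capped at $n$ are allowed; then Case (3) yields exactly the range $1\le k\le\min\{d-1,n-1\}$.

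\textbf{Base case.} At $n=1$ the relation reads $S_1^{(d)}=2S_0^{(d)}=2=C_2$, which holds.

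With these two checks the coefficient comparison is valid for every $n\ge 1$, and the identity of formal power series follows.
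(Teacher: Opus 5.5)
Your proof is correct and follows the paper's argument: multiply the recursion of Proposition~\ref{prop:recursion} by $x^n$, sum, and solve $S^{(d)}(x)\bigl(1-2x-\sum_{k=1}^{d-1}C_kx^{k+1}\bigr)=1$. You also make explicit two points the paper's proof passes over silently: that the recursion holds for $1\le n<d$ (checked via the Catalan convolution, with $S_n^{(d)}=C_{n+1}$ there), and that the summation must run over $n\ge 1$ rather than $n\ge 0$.
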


\begin{proof}
From \eqref{eq:recursion} we obtain 
\begin{eqnarray}\label{eq:sum expansion}
     \sum_{n \ge 0} S^{(d)}_n x^n = 2\sum_{n \ge 0} S^{(d)}_{n-1} x^n + \sum_{ n\ge 0}\sum_{k=1}^{d-1} C_k S^{(d)}_{n-k-1} x^n
\end{eqnarray}
where the left hand side satisfies
$\sum_{n\ge 0} S^{(d)}_n x^n = S^{(d)}(x)-S_0^{(d)} = S^{(d)}(x) -1$. 
On the right hand side we have $2\sum_{n \ge 0} S^{(d)}_{n-1} x^n  = 2 x S^{(d)}(x)$.
The double sum moreover is 
\[
\sum_{ n\ge 0}\sum_{k=1}^{d-1} C_k S^{(d)}_{n-k-1} x^n = \sum_{k=1}^{d-1} C_k \sum_{n\ge k+1} S^{(d)}_{n-k-1} x^n=  \sum_{k=1}^{d-1} C_k x^{k+1}\sum_{m\ge 0} S_m^{(d)} x^m = \sum_{k=1}^{d-1} C_k x^{k+1} S^{(d)}(x).
\]
Combining the above, from \eqref{eq:sum expansion} we obtain
\[
S^{(d)}(x)\left(1-2 x - \sum_{k=1}^{d-1} C_k x^{k+1} \right) = 1
\]
and so the claim follows. 
\end{proof}

\begin{Remark}
    The generating function $S^{(d)}$ has an alternative expression as
    \[
   S^{(d)} = \left[\frac{\binom{2 d}{d} x^{d+1} \, _2F_1\left(1,d+\frac{1}{2};d+2;4 x\right)}{d+1}-x+\frac{1}{2} \sqrt{1-4 x}+\frac{1}{2}
    \right]^{-1}
    \]
where $_2F_1(a,b,c;x) = \sum_{n=0}^\infty \frac{(a)_n(b)_n}{(c)_n} \frac{x^n}{n!}$ is the Gau\ss{}  hypergeometric function. 
\end{Remark}

Recall that the Catalan pellytope $\mathcal P_{d,n}$ is defined for $d\le n$ and $n\ge 1$. 
Hence, the numbers $S_n^{(d)}$ are defined for $d\le n$ and $n\ge 1$. 
The recursion formula \eqref{eq:recursion} however suggests a natural extension beyond the case $d\le n$. 
Consider the adapted recursion relation
\begin{equation}\label{eq:recursion incl catalan}
  \mathcal S^{(d)}_n = 2 \mathcal S^{(d)}_{n-1} + \sum_{k=1}^{\min\{d-1,n-1\}} C_k \mathcal S^{(d)}_{n-k-1}
\end{equation}

\begin{Lemma}
Let $(\mathcal S_n^{(d)})_{n\ge 1}$ be a sequence of numbers satisfying \eqref{eq:recursion incl catalan}. Then for $n<d$ we have $\mathcal S^{(d)}_n=C_{n+1}$.
\end{Lemma}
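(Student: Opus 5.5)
The plan is a strong induction on $n$ that matches the truncated recursion \eqref{eq:recursion incl catalan} against the quadratic Catalan recurrence. I use the normalisation fixed before Corollary~\ref{cor:generating function}, namely $\mathcal S_0^{(d)}=S_0^{(d)}=1$ and $\mathcal S_n^{(d)}=0$ for $n<0$. Note that $\mathcal S_0^{(d)}=1=C_1$, so the claim also holds for $n=0$, and this value will serve as the anchor of the induction.

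The key observation is how the summation range behaves for $n<d$. We have $n-1\le d-2<d-1$, so $\min\{d-1,n-1\}=n-1$. The recursion then reads
\[
\mathcal S^{(d)}_n = 2\mathcal S^{(d)}_{n-1}+\sum_{k=1}^{n-1}C_k\,\mathcal S^{(d)}_{n-k-1}.
\]
Every index $n-k-1$ appearing here lies in $[0,n-2]$. So only values with index between $0$ and $n-1$ enter, and the convention $\mathcal S^{(d)}_m=0$ for $m<0$ is never used.

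\emph{Base case, $n=1$.} The sum is empty, so $\mathcal S^{(d)}_1=2\mathcal S^{(d)}_0=2=C_2$.

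\emph{Inductive step.} Let $1\le n<d$ and assume $\mathcal S^{(d)}_m=C_{m+1}$ for all $0\le m<n$. Substituting gives
\[
\mathcal S^{(d)}_n=2C_n+\sum_{k=1}^{n-1}C_kC_{n-k}.
\]
The Catalan recurrence, applied with index $n+1$, reads $C_{n+1}=\sum_{k=0}^{n}C_kC_{n-k}$. Split off the two extreme terms $k=0$ and $k=n$. Since $C_0=1$, these contribute $C_0C_n+C_nC_0=2C_n$, and what remains is exactly $\sum_{k=1}^{n-1}C_kC_{n-k}$. Hence $\mathcal S^{(d)}_n=C_{n+1}$, which completes the induction.

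There is no real obstacle in this argument. The only point needing care is that the truncation $\min\{d-1,n-1\}$ is what makes the argument work. For $n<d$, the truncated sum together with the doubled term $2\mathcal S^{(d)}_{n-1}$ reproduces the full Catalan convolution. Once $n\ge d$, the terms with $k\ge d$ drop out and the sequence departs from the Catalan numbers.
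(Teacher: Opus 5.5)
Your proof is correct and is essentially the paper's argument: induction on $n$, observing that the sum runs to $n-1$, substituting $C_{m+1}$, and recognizing $2C_n+\sum_{k=1}^{n-1}C_kC_{n-k}$ as the Catalan convolution $\sum_{k=0}^{n}C_kC_{n-k}=C_{n+1}$. Your explicit anchor $\mathcal S_0^{(d)}=1$ spells out what the paper leaves implicit. One small correction concerns your closing remark, which is not part of the proof: at $n=d$ one still has $\min\{d-1,n-1\}=n-1$, so the same computation gives $\mathcal S_d^{(d)}=C_{d+1}$, and the departure from the Catalan numbers begins only at $n=d+1$ (consistent with the initial conditions for $n\le d$ in the paper's definition).
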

\begin{proof}
By induction, we have
\begin{equation*}
  \mathcal S^{(d)}_n = 2 \mathcal S^{(d)}_{n-1} + \sum_{k=1}^{n-1} C_k \mathcal S^{(d)}_{n-k-1} =  2 C_{n} + \sum_{k=1}^{n-1} C_k C_{n-k} =\sum_{k=0}^n C_kC_{n-k}=C_{n+1}.
\end{equation*}
\end{proof}

Hence, to define a number sequence satisfying \eqref{eq:recursion incl catalan} we only need to add the first $d$ entries as Catalan numbers.

\begin{Definition}
The {\bf $n^{\text{th}}$ $d$-Catalan Pell number} $\mathcal S_n^{(d)}$ for arbitrary $d\ge 1$ is defined by the recurrence relation \eqref{eq:recursion incl catalan} subject to $d$ initial conditions $\mathcal S_n^{(d)}=C_{n+1}$ for $n\le d$.    
\end{Definition}

Summarizing, Catalan and Pell numbers appear as the following special cases of Catalan Pell numbers:
\begin{eqnarray}\label{eq:summary numbers}
\mathcal S_n^{(1)}=2^n,\quad \mathcal S_n^{(2)}=P_{n+1}, \quad \mathcal S_n^{(d)}=C_{n+1} \text{ for } n\le d.
\end{eqnarray}
Moreover, $\lim_{d\to \infty} \mathcal S_n^{(d)}= C_{n+1}$. Geometrically, for fixed $n$, sequences of form $(\mathcal S_n^1,\, \mathcal S_n^2,\,\dots, \,\mathcal S_n^n)$ are interesting. 
By Proposition~\ref{prop: star of chord fan} the difference $\mathcal S^{(d)}_n -\mathcal S^{(d-1)}_{n}$ is the number of star subdivisions applied to $\Sigma_{d-1,n}$ in order to obtain $\Sigma_{d,n}$. Equivalently, it is the number of $(d-1)$-admissible $n$-chord diagrams in that contain $d$-crossings (counted with multiplicity).

\subsection{Dyck paths and chamber Ansatz}\label{sec:Dyck}

In this section we explain the connection between chord diagram fans and Dyck paths. 
This identifies Catalan pellytopes as special cases of polytopes studied in \cite{CalvoCortesFrost2026}.
In order to do so, consider the {\bf directed graph $\mathcal D_n$} that has 
\begin{enumerate}
    \item vertices labelled by $(k,i)$ where $k\in[0,n]$ and $i\in[n-k+1]$.
    \item arrows from the vertex $(k,i)$ for $k\in[0,n],i\in[n-k+1]$ to the vertices $(k
    +1,i),(k-1,i+1)$, whenever these vertices exist.
\end{enumerate}
By a little abuse of notation, we label the vertices by $q_{k,i}$ for $k\in[n],i\in[n-k+1]$ and by $1$ otherwise. 
We call a cycle consisting of four edges in the underlying non oriented graph a {\bf face of $\mathcal D_n$}.
Then label the face of $\mathcal D$ bound by the vertices $(k+1,i),(k,i),(k-1,i+1),(k,i+1)$ by $u_{i+1,k+i+2}$.
Additionally, we place the vertex $(0,0)$ and the vertex $(0,n+1)$ in a box.
The diagram $\mathcal D_n$ is depicted in Figure~\ref{fig:dyck diagram}.

A {\bf Dyck path} in $\mathcal D_n$ is an oriented path from vertex $(0,0)$ to vertex $(0,n+1)$ that does not contain any other vertices $(0,a)$.
Notice that every Dyck path consists of exactly $n$ {\bf upward arrows} of form $(k,i)\to (k+1,i)$ and $n$ {\bf downward arrows} of form $(k,i)\to (k-1,i+1)$.
For every Dyck path $\gamma$ in $\mathcal D_n$ define the {\bf directed subgraph $\mathcal D_n(\gamma)$} as the subgraph of $\mathcal D_n$ that lies below $\gamma$.
We label the upward arrows in $\gamma$ by $u_{1,3},u_{1,4},\dots,u_{1,n+2}$ (in this order) and the downward arrows in $\gamma$ by $u_{2(n+3)},u_{3(n+3)},\dots,u_{n+1(n+3)}$.
Faces of $\mathcal D_n$ that are supported on the subgraph $\mathcal D_n(\gamma)$ are called {\bf faces of $\mathcal D_n(\gamma)$}.
Finally, given $n$ for every $d\le n$ define the Dyck path $\gamma_d$ as
\begin{eqnarray}\label{eq:gamma d}
\begin{split}
\boxed{1}\to &q_{1,1}\to q_{2,1}\to\dots\to q_{d,1} \to q_{d-1,2}\to q_{d,2} \to q_{d-1,3}\to q_{d,3}\to \dots \\
\dots\to &q_{d-1,n-d+1}\to q_{d,n-d+1} \to q_{d-1,n-d+2}\to q_{d-2,n-d+3}\dots \to q_{1,n}\to \boxed{1}
\end{split}
\end{eqnarray}

\begin{Lemma}\label{lem:dyck for chords}
Let $d\in[n]$. There is a bijection between $\mathcal{C}h_d$ and 
\[
\{\text{faces }u_{ij} \text{ of }\mathcal D_n(\gamma_d)\} \cup \{\text{arrows in }\gamma_d\}.
\]
\end{Lemma}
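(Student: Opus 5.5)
The bijection will be the one forced by the labelling: a face labelled $u_{ab}$ goes to the chord $\overline{ab}$, and an arrow of $\gamma_d$ labelled $u_{1,m}$ or $u_{m(n+3)}$ goes to $\overline{1m}$ or $\overline{m(n+3)}$. The proof then reduces to two things. First, the face labels occurring in $\mathcal D_n(\gamma_d)$ must be exactly the interior chords $\overline{ab}$ with $2\le a<b\le n+2$ and $b-a\le d$. Second, the arrow labels must be exactly the $2n$ boundary-type chords in $\mathcal Ch_d$.

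The arrow part is immediate. Every Dyck path has exactly $n$ upward and $n$ downward arrows. By definition, the upward arrows are labelled $u_{1,3},\dots,u_{1,n+2}$ and the downward ones $u_{2(n+3)},\dots,u_{(n+1)(n+3)}$. These correspond to $\{\overline{1i}:i\in[3,n+2]\}$ and $\{\overline{j(n+3)}:j\in[2,n+1]\}$, which are precisely the first two sets in \eqref{eq:def d-chords}. Distinct arrows get distinct labels, so this part is injective and surjective onto those chords.

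For the faces, the face bounded by $(k+1,i),(k,i),(k-1,i+1),(k,i+1)$ has label $u_{i+1,k+i+2}$. Set $a=i+1$ and $b=k+i+2$. Then $b-a=k+1$, and the face sits at "height" $k$, meaning its top vertex is at level $k+1$. The map $(k,i)\mapsto(a,b)$ is injective, with inverse $i=a-1$, $k=b-a-1$. The faces of $\mathcal D_n$ are exactly the $(k,i)$ for which all four vertices exist, namely $1\le k\le n-1$ and $1\le i\le n-k$. Under the map these become $2\le a$, $b-a\ge 2$, and $b\le n+2$, which are exactly the interior chords of the $(n+3)$-gon.

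It remains to show that a face lies below $\gamma_d$ if and only if $k+1\le d$, i.e. $b-a\le d$. I would read off the shape of $\gamma_d$ from \eqref{eq:gamma d}. It climbs from $(0,0)$ (I treat the box vertex consistently with column indexing) up to level $d$ in column $1$. It then zigzags between levels $d$ and $d-1$ through columns $2,\dots,n-d+1$, and finally descends. Hence the region below $\gamma_d$ consists of all vertices $(k,i)$ with $k\le d$, intersected with the triangle of $\mathcal D_n$.

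A face with top vertex $(k+1,i)$ is contained in this region exactly when $k+1\le d$. For the other direction, suppose $k+1\le d$. Then all four vertices have level at most $d$, and the zigzag at levels $d-1,d$ covers every column in which a level-$d$ vertex exists, so the face is supported below $\gamma_d$. The ramps at the two ends also need care: there the path passes through levels $<d$ exactly along the triangle's boundary, so no extra faces appear. Combining this with the computation above shows the faces of $\mathcal D_n(\gamma_d)$ correspond bijectively to $\{\overline{ab}:2\le a<b\le n+2,\ b-a\le d\}$, which completes the bijection.

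The main obstacle is this last bookkeeping step: making the phrase "supported on the subgraph below $\gamma_d$" precise, and checking the boundary columns near $i=1$ and $i=n-d+1$. The index conventions for the vertices $(0,0)$ and $(0,n+1)$ are slightly inconsistent with $i\in[n-k+1]$, so I would first fix coordinates (for instance, the position of $(k,i)$ is $(i+k/2,k)$) to make "below" a literal planar statement, and then verify the ends directly.
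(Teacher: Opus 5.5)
Your proof is correct and supplies the verification the paper leaves implicit: the lemma is stated there without proof, with the bijection read off from the face labels $u_{i+1,k+i+2}$ and the arrow labels of $\gamma_d$. Your key check is also the right one, namely that the top zigzag of $\gamma_d$ uses every edge between levels $d-1$ and $d$, so the faces below it are exactly those with top vertex at level $k+1\le d$, i.e.\ the chords $\overline{ab}$ with $2\le b-a\le d$. You also handle the conventions correctly: the boxed start vertex must be $(0,1)$ in the indexing $i\in[n-k+1]$, and the third set in \eqref{eq:def d-chords} must be read with $j-i\ge 2$, as you implicitly do, since otherwise polygon edges would be included and the two sets would have different sizes.
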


\begin{Proposition}
    There is a bijection between Dyck paths in $\mathcal D_n$ and the set of chord diagram fans $\{\Sigma_{d,n}:1\le d\le n\}$ and their star subdivisions $\{\Star_{\tau_{i_1}}(\Star_{\tau_2}(\dots \Star_{\tau_{i_r}}(\Sigma_{d,n})\dots)):d\in[n], \{i_1,\dots,i_r\}\in\binom{[n-d+1]}{r}\}$. 
    More precisely, given $\gamma$ the associated fan is the inner normal fan of
    \begin{eqnarray}\label{eq:Dyck polytope}
    \mathcal P_\gamma = \operatorname{Newt}\left(\prod_{q_{k,i}\in\mathcal D_n(\gamma)} q_{k,i}\right).
    \end{eqnarray}
\end{Proposition}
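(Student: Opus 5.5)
The plan is to identify which subgraphs $\mathcal D_n(\gamma)$ can occur and to show that adding a single vertex $q_{k,i}$ to the region below a Dyck path corresponds to one star subdivision $\Star_{\tau_i}$. First I describe Dyck paths by their region. Since arrows go $(k,i)\to(k+1,i)$ or $(k,i)\to(k-1,i+1)$, the region $\mathcal D_n(\gamma)$ is a down-closed set (an order ideal) of vertices $(k,i)$. If $(k,i)$ lies below $\gamma$, then so do $(k-1,i)$ and $(k-1,i+1)$. Conversely, every such ideal containing all level-one vertices $q_{1,i}$ is bounded by exactly one Dyck path. The level-one condition is forced because $\gamma$ avoids $(0,a)$ for $0<a<n+1$.

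The polytope $\mathcal P_\gamma$ is the Minkowski sum of the $Q_{k,i}$ in the ideal, so by \eqref{eq:refinement} its normal fan is the common refinement of the $\Sigma(Q_{k,i})$. The ideal of $\gamma_d$ is exactly $\{(k,i):k\le d\}$, so $\mathcal P_{\gamma_d}=\mathcal P_{d,n}$ and the fan is $\Sigma_{d,n}$.

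Next I use the constraint on ideals. An ideal with maximal level $d$ contains every level $\le d-1$ vertex: a vertex $(d,i)$ forces $(d-1,i)$ and $(d-1,i+1)$, and iterating from a full level one upward gives fullness. For this I need to check carefully that full lower levels follow from Dyck-ness; I expect a missing $(k,i)$ at level $k<d$ to block the path from reaching the top. This step is doubtful, because an ideal could have several separated peaks.

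If that check fails, I index Dyck paths differently. The fan $\Star_{\tau_{i_1}}\cdots(\Sigma_{d,n})$ is, by Proposition~\ref{prop:stars}, exactly $\Sigma_{d,n}\wedge\bigwedge_{p}\Sigma(Q_{d+1,i_p})$. So every ideal consisting of levels $\le d$ plus an arbitrary subset $\{(d+1,i_p)\}$ of level $d+1$ is realized, with no further ideal condition on level $d+1$ since its predecessors are all present. This gives a map from pairs $(d,\{i_1,\dots,i_r\})$ to Dyck paths.

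Finally, for bijectivity I have to reconcile both sides. The hypothesis of Proposition~\ref{prop:stars} is $\Sigma_{d,n}=\mathcal C_{d,n}$, which Theorem~\ref{thm:chord fan} supplies. The commuting argument of Proposition~\ref{prop: star of chord fan} (disjoint crossings via Proposition~\ref{prop:admissible}(2) and Lemma~\ref{lem:commuting stars}) shows that the partial refinement by a subset of the $Q_{d+1,i}$ is the iterated star subdivision in any order. Injectivity follows because distinct subsets add distinct rays $e_i-e_{i+d}$ (Corollary~\ref{cor:rays}). Taking $r=n-d$, the full set, gives $\Sigma_{d+1,n}$, so I identify that case with $r=0$ at $d+1$.

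The main obstacle is surjectivity onto Dyck paths: showing every Dyck path region has the form "full up to level $d$ plus some vertices at level $d+1$". Ideals with higher isolated peaks seem possible, for example containing $(3,1)$ but not $(2,3)$ when $n$ is large. If that is right, the statement must be read as the set of fans being larger. In that case I would instead prove by induction on the ideal that adding a maximal vertex $(k,i)$ is a star subdivision at $\cone(e_i,-e_{i+k-1})$, reusing the proof of Lemma~\ref{lem:common-minimizer}. This works because the compatibility used there only involves chords in the window $[i+1,i+k+1]$.
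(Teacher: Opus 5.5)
Your doubt is justified, and you should commit to it: surjectivity is not merely the hard step, it is false. The statement as written therefore cannot be proved for $n\ge 4$.

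\emph{The count.} Allow Dyck paths to return to level $0$; they must be allowed to, or $\gamma_1$ is not a Dyck path. Then there are $C_n$ of them, one for each order ideal containing levels $\le 1$. The listed family contains only $n+\sum_{d=1}^{n-1}(2^{n-d}-2)=2^n-n$ distinct fans. For $n=4$ that is $14$ Dyck paths against $12$ fans.

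\emph{Your example works already at $n=4$.} The path $(0,1)\to(1,1)\to(2,1)\to(3,1)\to(2,2)\to(1,3)\to(0,4)\to(1,4)\to(0,5)$ encloses $(3,1)$ but not $(2,3)$. A direct computation of the faces minimized by $e_1-e_3$ and by $e_3-e_4$ shows that $\Sigma(\mathcal P_\gamma)$ has the ray $e_1-e_3$ but not $e_3-e_4$. Every listed fan containing some $e_i-e_{i+2}$ refines $\Sigma_{2,4}$, so it contains all $e_j-e_{j+1}$. Hence this fan is not in the list.

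\emph{The literal definition.} If you forbid interior visits to level $0$, as the definition literally says, the region is forced to contain levels $1$ and $2$, not just level $1$ as you wrote. Then $\Sigma_{1,n}$ and its partial subdivisions have no Dyck path at all. The same failure appears one level up for $n=5$, with $(4,1)$ enclosed and $(3,3)$ not.

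\emph{The paper's proof has the gap you were worried about.} It asserts that an arbitrary Dyck path arises from some $\gamma_d$ by adding only level-$(d+1)$ faces, i.e.\ by detouring $q_{d,i}\to q_{d-1,i+1}\to q_{d,i+1}$ through $q_{d+1,i}$. It then analyses a single such addition via Propositions~\ref{prop:stars} and~\ref{prop: star of chord fan}. Like your argument, this only reaches ideals of the form ``levels $\le d$ plus a subset of level $d+1$''.

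For those ideals your argument is the paper's argument, done more carefully. You use Proposition~\ref{prop:stars}, commutation via Lemma~\ref{lem:commuting stars} and Proposition~\ref{prop:admissible}(2), and injectivity from the new rays $e_i-e_{i+d}$. This proves the true part of the statement: the listed fans correspond injectively to Dyck paths via $\gamma\mapsto\Sigma(\mathcal P_\gamma)$.

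\emph{Your fallback is the right repair.} It is also what Corollary~\ref{cor:Dyck fans} actually needs. Dyck paths correspond bijectively to the fans obtained from $\Sigma_{1,n}$ by star subdivisions at $\cone(e_i,-e_{i+k-1})$, taken along a linear extension of the ideal. Injectivity is immediate, because $e_i-e_{i+k-1}$ is a ray exactly when $(k,i)$ is enclosed.

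When you write the inductive step, do two things.
\begin{enumerate}
\item Carry as induction hypothesis that the current fan is the $\gamma$-admissible chord diagram fan.
\item State the window argument precisely. It is not true that only chords inside $[i+1,i+k+1]$ matter, since $\overline{1j}$ and $\overline{i'(n+3)}$ reach outside. The real point is that $(k-1,i)$ and $(k-1,i+1)$ are enclosed, so down-closure puts every chord $\overline{i'j}$ with $i+1\le i'<j\le i+k+1$ and $2\le j-i'\le k-1$ into $\mathcal Ch_\gamma$. Hence, among pairwise compatible chords, the only way to cover the window in the argument of Lemma~\ref{lem:common-minimizer} is the crossing pair $\overline{(i+1)(n+3)},\overline{1(i+k+1)}$. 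That pair is compatible exactly because $(k,i)$ is not yet enclosed.
\end{enumerate}
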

\begin{proof}
    By Lemma~\ref{lem:dyck for chords} we have a Dyck path $\gamma_d$ for every chord diagram fan $\mathcal C_{d,n}$. 
    An arbitrary Dyck path $\gamma$ can be obtained from a Dyck path $\gamma_d$ by successively \emph{adding a face}. More precisely, this is achieved by replacing the arrows of the form
    \[
    q_{d,i}\to q_{d-1,i+1}\to q_{d,i+1} \quad \text{by} \quad q_{d,i}\to q_{d+1,i+1}\to q_{d,i+1}. 
    \]
    Assume a Dyck path $\gamma$ is obtained from a Dyck path $\gamma_d$ by the above replacement.
    Then $\mathcal D_n(\gamma)$ has one more face than $\mathcal D_n(\gamma_d)$, namely the face labelled by $u_{i+1,i+d+2}$.
    Adding the face $u_{i+1,i+d+2}$ corresponding to adding the chord $\overline{(i+1)(i+d+2)}$ to $\mathcal Ch_d$.
    This chord is added by the lift of the $(d+1)$-crossing $\overline{1(i+d+2)}$ and $\overline{(i+1)(n+3)}$.
    Hence, by Propositions~\ref{prop:stars} and \ref{prop: star of chord fan} the chord diagram fan associated to $\gamma$ is $\operatorname{Star}_{\tau}(\Sigma_{d,n})$ where $\tau$ is the two dimensional cone spanned by the rays corresponding to $\overline{1(i+d+2)}$ and $\overline{(i+1)(n+3)}$.
    Now \eqref{eq:Dyck polytope} follows from Proposition~\ref{prop:stars}.
\end{proof}

The polytopes in \eqref{eq:Dyck polytope} are exactly the polytopes associated to Dyck paths in \cite[Equation (8)]{CalvoCortesFrost2026}. Hence, Catalan pellytopes form a subclass of Dyck path polytopes. 
Their vertices are in bijection with chord diagrams using the alternative crossing rule from Remark~\ref{rmk:crossing chords}: given a Dyck path $\gamma$ define $\mathcal Ch_\gamma:=\{\overline{ab}:u_{ab}\in\mathcal D_n(\gamma)\}$. 
A {\bf chord diagram is $\gamma$-admissible} if it is a maximal collection of chords in $\mathcal Ch_\gamma$ so that each pair of chords satisfies one of the following
\begin{enumerate}
    \item the two chords do not cross, or
    \item they are of form $\overline{1j},\overline{i(n+3)}$ with $i\le j$ and $\overline{ij}\not\in\mathcal Ch_\gamma$.
\end{enumerate}

\begin{Corollary}\label{cor:Dyck fans}
Let $\gamma$ be a Dyck path in $\mathcal D_n$ and denote by $\Sigma_\gamma$ the inner normal fan of $\mathcal P_\gamma$ defined in \eqref{eq:Dyck polytope}. 
Then $\Sigma_\gamma$ is smooth and flag and its maximal cones correspond to $\gamma$-admissible chord diagrams.   
\end{Corollary}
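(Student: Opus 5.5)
We argue by induction on the number of faces of $\mathcal D_n(\gamma)$ lying above $\mathcal D_n(\gamma_d)$, where $d$ is chosen maximal with $\mathcal D_n(\gamma_d)\subseteq\mathcal D_n(\gamma)$. The base case $\gamma=\gamma_d$ is Theorem~\ref{thm:chord fan} together with Corollary~\ref{cor:smooth flag}. By Lemma~\ref{lem:dyck for chords} and Remark~\ref{rmk:crossing chords}, $\gamma_d$-admissibility coincides with $d$-admissibility, because $\mathcal Ch_{\gamma_d}=\mathcal Ch_d$ and a crossing pair $\overline{1j},\overline{i(n+3)}$ is allowed exactly when $\overline{ij}\notin\mathcal Ch_d$.

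For the inductive step, write $\gamma$ as $\gamma'$ with one extra face $u_{i+1,i+d'+2}$ added via the replacement $q_{d',i}\to q_{d'-1,i+1}\to q_{d',i+1}\ \rightsquigarrow\ q_{d',i}\to q_{d'+1,i+1}\to q_{d',i+1}$. Correspondingly, $\mathcal P_\gamma=\mathcal P_{\gamma'}+Q_{d'+1,i+1}$. We then redo Proposition~\ref{prop:stars} with $\Sigma_{\gamma'}$ in place of $\Sigma_{d-1,n}$.
\begin{enumerate}
\item Lemma~\ref{lem:common-minimizer} carries over, since its proof uses only that the rays are pairwise compatible. By \eqref{eq:Iirho from chords}, the intervals $I(\rho)$ can cover $[0,d'+1]$ only if the two chords $\overline{1(i+d'+2)}$ and $\overline{(i+1)(n+3)}$ both appear.
\item Any other chain of intervals would produce a crossing pair $\overline{1j},\overline{k(n+3)}$ with $\overline{kj}$ inside the relevant window. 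Such a chord $\overline{kj}$ already lies in $\mathcal Ch_{\gamma'}$, because Dyck subgraphs are closed downward. So that crossing is not $\gamma'$-compatible, and the chain cannot occur.
\end{enumerate}
The computation of $C_D\cap\sigma_0$ and $C_D\cap\sigma_{d'+1}$ is then verbatim. It shows that $\Sigma_\gamma=\operatorname{Star}_\tau(\Sigma_{\gamma'})$, where $\tau$ is spanned by those two rays, and the new ray $e_{i}-e_{i+d'}$ is the chord $\overline{(i+1)(i+d'+2)}$.

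With this in hand, the three claims follow.
\begin{enumerate}
\item \emph{Smoothness.} A star subdivision of a smooth fan along a smooth 2-cone is smooth, since the new generator $u_\tau$ together with the remaining generators forms a lattice basis.
\item \emph{Maximal cones.} Lemma~\ref{lem:starsub chords} applies to each cone containing $\tau$. The two new cones are the diagrams $D_{ij}(1)$ and $D_{ij}(n+3)$, and these are exactly the $\gamma$-admissible diagrams arising from $\gamma'$-admissible diagrams containing the now forbidden crossing. Diagrams without this crossing are unchanged and remain $\gamma$-admissible.
\item \emph{Flagness.} We repeat the argument of Lemma~\ref{lem:prep flag}. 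Pairwise $\gamma$-compatible chords extend to a maximal such collection, which is a maximal cone by the previous point. For this we need all $\gamma$-admissible diagrams to have $n$ chords; this follows as in Proposition~\ref{prop:admissible}(1) by lifting all the way to triangulations.
\end{enumerate}

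The main obstacle is the first step: the modified Lemma~\ref{lem:common-minimizer} for a general Dyck path. There the set of allowed short chords is no longer uniform in length, so the interval-chain argument needs the downward-closedness of $\mathcal D_n(\gamma')$. Concretely, we must check that if $\overline{1j}$ and $\overline{k(n+3)}$ cross with $[k,j]$ strictly inside $[i+1,i+d'+2]$, then $\overline{kj}\in\mathcal Ch_{\gamma'}$. I would try first to verify this directly from the face labelling $u_{i+1,k+i+2}$ of $\mathcal D_n$.
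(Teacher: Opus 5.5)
Your proposal is correct and follows the paper's route: the paper takes $\gamma=\gamma_d$ from Theorem~\ref{thm:chord fan} and Corollary~\ref{cor:smooth flag}, then says those proofs carry over along the star subdivisions that add faces one at a time. Your rerun of Proposition~\ref{prop:stars} supplies the detail the paper leaves implicit, namely that downward-closedness of $\mathcal D_n(\gamma')$ puts every chord strictly inside the window into $\mathcal Ch_{\gamma'}$. One index fix: the face $u_{i+1,i+d'+2}$ has top vertex $q_{d'+1,i}$, so the new summand is $Q_{d'+1,i}$ rather than $Q_{d'+1,i+1}$, which is what matches $\tau=\cone(e_i,-e_{i+d'})$ and the new ray $e_i-e_{i+d'}$.
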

\begin{proof}
The case of $\gamma=\gamma_d$ is Theorem~\ref{thm:chord fan} and Corollary~\ref{cor:smooth flag}.
The proofs of the aforementioned results generalize to star subdivisions of fans $\Sigma_{d,n}$ corresponding to crossings of chords. 
As each $\Sigma_\gamma$ is obtained from some $\Sigma_{d,n}$ by a sequence of such star subdivisions, the claim follows.
\end{proof}

\newcommand{\diagdots}[2][1]{%
  \pgfmathsetmacro{\dotangle}{#1*atan2(1,0.5*\horscale)}%
  \node[rotate=\dotangle] at (#2) {$\cdots$};%
}
\begin{figure}    \centering
\adjustbox{max width=\linewidth,center}{%
\begin{tikzpicture}
  
  \def\horscale{3}
  \coordinate (c11) at (0*\horscale,5);
  \coordinate (c21) at (-0.5*\horscale,4);  \coordinate (c22) at (0.5*\horscale,4);
  \coordinate (c31) at (-1*\horscale,3);  \coordinate (c32) at (0*\horscale,3);  \coordinate (c33) at (1*\horscale,3);
  \coordinate (c41) at (-1.5*\horscale,2);  \coordinate (c42) at (-0.5*\horscale,2);  \coordinate (c43) at (0.5*\horscale,2); \coordinate (c44) at (1.5*\horscale,2);  
  \coordinate (c51) at (-2*\horscale,1); \coordinate (c52) at (-1*\horscale,1); \coordinate (c53) at (0*\horscale,1);  \coordinate (c54) at (1*\horscale,1);  \coordinate (c55) at (2*\horscale,1);
  \coordinate (c61) at (-2.5*\horscale,0); \coordinate (c62) at (-1.5*\horscale,0);  \coordinate (c63) at (-0.5*\horscale,0);  \coordinate (c64) at (0.5*\horscale,0); \coordinate (c65) at (1.5*\horscale,0); \coordinate (c66) at (2.5*\horscale,0); 
  
  \node at (c11) {$q_{n,1}$};
  \node at (c21) {$q_{n-1,1}$};   \node at (c22) {$q_{n-1,2}$};
  \node at (c32) {$q_{n- 2,2}$};
  \node at (c41) {$q_{2,1}$};   \node at (c44) {$q_{2,n-1}$}; 
  \node at (c51) {$q_{1,1}$};   \node at (c52) {$q_{1,2}$};   \node at (c53) {$\cdots$};   \node at (c54) {$q_{1,n-1}$};   \node at (c55) {$q_{1,n}$} ;
  \node at (c61) {$\boxed{1}$};
  \foreach \i in {2,...,5}{
    \node at (c6\i) {1};
  }
  \node at (c66) {$\boxed{1}$};

  \draw[red, ->, shorten >= 0.3cm, shorten <= 0.6cm] (c21) -- node[midway, above left] {$u_{1,n+2}$} (c11);
  \draw[red, ->, shorten >= 0.6cm, shorten <= 0.3cm] (c31) -- node[midway, above left] {$u_{1,n+1}$} (c21);
  \draw[red, ->, shorten >= 0.3cm, shorten <= 0.3cm] (c41) -- node[midway, above left] {$u_{1,5}$} (c31);
  \draw[red, ->, shorten >= 0.3cm, shorten <= 0.3cm] (c51) -- node[midway, above left] {$u_{1,4}$} (c41);
  \draw[red, ->, shorten >= 0.3cm, shorten <= 0.3cm] (c61) -- node[midway, above left] {$u_{1,3}$} (c51);
  \draw[red, ->, shorten >= 0.6cm, shorten <= 0.3cm] (c11) -- node[midway, above right] {$u_{2,n+3}$} (c22);
  \draw[red, ->, shorten >= 0.3cm, shorten <= 0.6cm] (c22) -- node[midway, above right] {$u_{3,n+3}$} (c33);
  \draw[red, ->, shorten >= 0.6cm, shorten <= 0.3cm] (c33) -- node[midway, above right] {$u_{n-1,n+3}$} (c44);
  \draw[red, ->, shorten >= 0.3cm, shorten <= 0.6cm] (c44) -- node[midway, above right] {$u_{n,n+3}$} (c55);
  \draw[red, ->, shorten >= 0.3cm, shorten <= 0.3cm] (c55) -- node[midway, above right] {$u_{n+1,n+3}$} (c66);

  \draw[->, shorten >= 0.3cm, shorten <= 0.3cm] (c62) -- (c52);
  \draw[->, shorten >= 0.3cm, shorten <= 0.3cm] (c52) -- (c42);
  \draw[->, shorten >= 0.6cm, shorten <= 0.3cm] (c42) -- (c32);
  \draw[->, shorten >= 0.6cm, shorten <= 0.6cm] (c32) -- (c22);
  \draw[->, shorten >= 0.6cm, shorten <= 0.6cm] (c21) -- (c32);
  \draw[->, shorten >= 0.3cm, shorten <= 0.6cm] (c32) -- (c43);
  \draw[->, shorten >= 0.6cm, shorten <= 0.3cm] (c43) -- (c54);
  \draw[->, shorten >= 0.3cm, shorten <= 0.6cm] (c54) -- (c65);
  \draw[->, shorten >= 0.3cm, shorten <= 0.3cm] (c41) -- (c52);
  \draw[->, shorten >= 0.3cm, shorten <= 0.3cm] (c52) -- (c63);
  \draw[->, shorten >= 0.6cm, shorten <= 0.3cm] (c64) -- (c54);
  \draw[->, shorten >= 0.6cm, shorten <= 0.6cm] (c54) -- (c44);
  \draw[->, shorten >= 0.3cm, shorten <= 0.3cm] (c51) -- (c62);
  \draw[->, shorten >= 0.3cm, shorten <= 0.3cm] (c65) -- (c55);

  \node at ($(c11)!0.5!(c32)$) {$u_{2,n+2}$};
  \node at ($(c21)!0.5!(c42)$) {$u_{2,n+1}$};
  \node at ($(c31)!0.5!(c52)$) {$u_{2,5}$};
  \node at ($(c41)!0.5!(c62)$) {$u_{2,4}$};
  \node at ($(c22)!0.5!(c43)$) {$u_{3,n+2}$};
  \node at ($(c33)!0.5!(c54)$) {$u_{n-1,n+2}$};
  \node at ($(c44)!0.5!(c65)$) {$u_{n,n+2}$};
  \node at ($(c42)!0.5!(c63)$) {$u_{3,5}$};
  \node at ($(c43)!0.5!(c64)$) {$u_{n-1,n+1}$};

  \diagdots{c31}  \diagdots{c42}  \diagdots[-1]{c33}  \diagdots[-1]{c43}
\end{tikzpicture}
}
\caption{The oriented graph $\mathcal D_n$ with its vertex and face labeling. The Dyck path $\gamma_n$ is depicted in red}
 \label{fig:dyck diagram}
\end{figure}

Fix a Dyck path $\gamma$ in $\mathcal D_n$. The diagram $\mathcal D_n(\gamma)$ induces an isomorphism between two Laurent polynomial rings:
\[
S_\gamma^{\pm}:=\mathbb C[u_{i,j}^{\pm 1}: u_{i,j}\in \mathcal D_n(\gamma)] \quad \text{and} \quad R_\gamma^{\pm}:=\mathbb C[y_1^{\pm 1},\dots,y_n^{\pm 1},q_{k,i}^{\pm 1}:q_{k,i}\in \mathcal D_n(\gamma)]
\]
The maps are induced by Berenstein--Fomin--Zelevinsky's chamber Ansatz \cite[\S2]{parametrizations} as we explain subsequently. 
Calvo Cortes and Frost define a map 
\begin{eqnarray}\label{eq:f gamma}
    f_\gamma: S_\gamma^{\pm} \to R_\gamma^{\pm} 
\end{eqnarray}
as follows: the variable $u_{a,b}$ labeling a face in $\mathcal D_n(\gamma)$ is mapped to the ratio of the product of polynomials labeling the vertices incident to $u$ above and below divided by the product of polynomials labeling the vertices to its right and its left, see \cite[Equation (6)]{CalvoCortesFrost2026}. Pictorially we have
\[
\begin{tikzpicture}[baseline=(current bounding box.center)]
  \def\horscale{3}
  \coordinate (c11) at (0,1);
  \coordinate (c21) at (-0.5*\horscale,0); \coordinate (c22) at (0.5*\horscale,0);
  \coordinate (c31) at (0,-1);

  \node at (c11) {$q_{k+1,i}$};
  \node at (c21) {$q_{k,i}$};   \node at (c22) {$q_{k,i+1}$};
  \node at (c31) {$q_{k-1,i+1}$};

  \draw[->, shorten >= 0.6cm, shorten <= 0.3cm] (c21) -- (c11);
  \draw[->, shorten >= 0.6cm, shorten <= 0.6cm] (c11) -- (c22);
  \draw[->, shorten >= 0.6cm, shorten <= 0.3cm] (c21) -- (c31);
  \draw[->, shorten >= 0.6cm, shorten <= 0.6cm] (c31) -- (c22);

  \node at ($(c11)!0.5!(c31)$) {$u_{i+1,i+k+2}$};
\end{tikzpicture}\quad \quad \quad
f_{\gamma}: u_{i+1,i+k+2} \mapsto \frac{q_{k+1,i}q_{k-1,i+1}}{q_{k,i}q_{k,i+1}}
\]
This is precisely the chamber Ansatz as in \cite[Equation 1.10 and Figure 2]{parametrizations}.
The case of $u_{1,k+3}$ and $u_{i,n+3}$ labelling arrows correspond in $\mathcal D_n(\gamma_d)$ to 
\[
\begin{tikzpicture}[baseline=(current bounding box.center)]
  \def\horscale{3}
  \coordinate (c11) at (0.5*\horscale,1);
  \coordinate (c21) at (0,0);

  \node at (c11) {$q'$};
  \node at (c21) {$q$};

  \draw[red, ->, shorten >= 0.3cm, shorten <= 0.3cm] (c21) -- node[midway, above left] {$u_{1,k+3}$} (c11);
\end{tikzpicture} \quad\quad f_{\gamma}: u_{1,k+3}\mapsto \frac{q}{q'} \quad\quad \text{and}\quad \quad
\begin{tikzpicture}[baseline=(current bounding box.center)]
  \def\horscale{3}
  \coordinate (c11) at (0,1);
  \coordinate (c21) at (0.5*\horscale,0);

  \node at (c11) {$q$};
  \node at (c21) {$q'$};

  \draw[red, ->, shorten >= 0.3cm, shorten <= 0.3cm] (c11) -- node[midway, above right] {$u_{k,n+3}$} (c21);
\end{tikzpicture} \quad\quad f_{\gamma}: u_{k,n+3}\mapsto \frac{y_{k-1}q}{q'}.
\]
Notice that the images of $u_{1,k+3}$ and $u_{k,n+3}$ depend on $\gamma$ while the others do not.

We proceed by defining a map in the opposite direction, $g_\gamma:R_\gamma^{\pm}\to S_\gamma^{\pm}$ resembling the dual chamber Ansatz as studied in \cite{Lara_Ghislain}.
Subsequently we show that the two maps are inverse.
For $q_{k,i}$ labeling a vertex in $\mathcal D_n(\gamma)$ define its image as the product of all inverses of $u$-variables contained in the maximal rectangle in $\mathcal D_n(\gamma)$ whose rightmost vertex is $q_{k,i}$. 
This includes the $u$-variables of all faces in the rectangle as well as all $u$-variables of upward arrows in the boundary of the rectangle.
Notice that the bottom vertex of such a rectangle is necessarily labeled by $1$.

\begin{Example}
For $n=4$ and $d=3$ the diagram $\mathcal D_4(\gamma_3)$ is
\[
\begin{tikzpicture}
  \def\horscale{3}
  \coordinate (c14) at (-0.5*\horscale,3);  \coordinate (c16) at (0.5*\horscale,3);
  \coordinate (c23) at (-1*\horscale,2);  \coordinate (c25) at (0*\horscale,2);  \coordinate (c27) at (1*\horscale,2);
  \coordinate (c32) at (-1.5*\horscale,1);  \coordinate (c34) at (-0.5*\horscale,1);  \coordinate (c36) at (0.5*\horscale,1);  \coordinate (c38) at (1.5*\horscale,1);
  \coordinate (c41) at (-2*\horscale,0);  \coordinate (c43) at (-1*\horscale,0);  \coordinate (c45) at (0*\horscale,0);  \coordinate (c47) at (1*\horscale,0);  \coordinate (c49) at (2*\horscale,0);

  \node[blue] at (c14) {$q_{31}$};   \node at (c16) {$q_{32}$};
  \node[blue] at (c23) {$q_{21}$};   \node[blue] at (c25) {$q_{22}$};   \node at (c27) {$q_{23}$};
  \node[blue] at (c32) {$q_{11}$};   \node[blue] at (c34) {$q_{12}$};   \node at (c36) {$q_{13}$};   \node at (c38) {$q_{14}$};
  \node at (c41) {$\boxed{1}$};   \node[blue] at (c43) {$1$};   \node at (c45) {$1$};   \node at (c47) {$1$};   \node at (c49) {$\boxed{1}$};

  \node[blue] at ($(c14)!0.5!(c34)$) {$u_{25}$};
  \node at ($(c16)!0.5!(c36)$) {$u_{36}$};
  \node[blue] at ($(c23)!0.5!(c43)$) {$u_{24}$};
  \node at ($(c25)!0.5!(c45)$) {$u_{35}$};
  \node at ($(c27)!0.5!(c47)$) {$u_{46}$};

  \draw[red, ->, shorten >= 0.3cm, shorten <= 0.3cm] (c41) -- node[midway, above left] {$u_{13}$} (c32);
  \draw[blue, ->, shorten >= 0.3cm, shorten <= 0.3cm] (c32) -- node[midway, above left] {$u_{14}$} (c23);
  \draw[blue, ->, shorten >= 0.3cm, shorten <= 0.3cm] (c23) -- node[midway, above left] {$u_{15}$} (c14);
  \draw[red, ->, shorten >= 0.3cm, shorten <= 0.3cm] (c14) -- node[midway, above right] {$u_{27}$} (c25);
  \draw[red, ->, shorten >= 0.3cm, shorten <= 0.3cm] (c25) -- node[midway, above left] {$u_{16}$} (c16);
  \draw[red, ->, shorten >= 0.3cm, shorten <= 0.3cm] (c16) -- node[midway, above right] {$u_{37}$} (c27);
  \draw[red, ->, shorten >= 0.3cm, shorten <= 0.3cm] (c27) -- node[midway, above right] {$u_{47}$} (c38);
  \draw[red, ->, shorten >= 0.3cm, shorten <= 0.3cm] (c38) -- node[midway, above right] {$u_{57}$} (c49);

  \draw[->, shorten >= 0.3cm, shorten <= 0.3cm] (c32) -- (c43);
  \draw[->, shorten >= 0.3cm, shorten <= 0.3cm] (c43) -- (c34);
  \draw[->, shorten >= 0.3cm, shorten <= 0.3cm] (c34) -- (c45);
  \draw[->, shorten >= 0.3cm, shorten <= 0.3cm] (c45) -- (c36);
  \draw[->, shorten >= 0.3cm, shorten <= 0.3cm] (c36) -- (c47);
  \draw[->, shorten >= 0.3cm, shorten <= 0.3cm] (c47) -- (c38);
  \draw[->, shorten >= 0.3cm, shorten <= 0.3cm] (c23) -- (c34);
  \draw[->, shorten >= 0.3cm, shorten <= 0.3cm] (c34) -- (c25);
  \draw[->, shorten >= 0.3cm, shorten <= 0.3cm] (c25) -- (c36);
  \draw[->, shorten >= 0.3cm, shorten <= 0.3cm] (c36) -- (c27);
\end{tikzpicture}
\]
The vertices and face variables of the maximal rectangle in $\mathcal D_4(\gamma_3)$ with rightmost corner $q_{22}$ are colored blue. 
Thus $g_{\gamma_3}(q_{22})=u_{14}^{-1}u_{15}^{-1}u_{24}^{-1}u_{25}^{-1}$.
\end{Example}

The image of a variable $y_k$ under $g_\gamma$ is given as follows: to a down arrow (labeled by some $u_{i(n+3)}$) associate the maximal rectangle in $\mathcal D_n(\gamma)$ whose top right edge is the down arrow. In particular, its bottom vertex is $(0,i)$. Denote this rectangle by $R(i,n+3)$.
Similarly, the vertex $(0,i-1)$ is the bottom vertex of a rectangle whose top left edge is the upward arrow labeled $u_{1,i+2}$. Denote this rectangle by $R(1,i+2)$.
Then $y_k$ maps to 
\[
\frac{\prod_{u_{a,b}\in R(k+1,n+3)} u_{a,b}}{\prod_{u_{a,b}\in R(1,k)} u_{a,b}}.
\]
Notice that $ R(1,k)$ is the rectangle determining the image of $q_{1,k}$ under $g_\gamma$.

\begin{Example}
Continuing with $\mathcal D_4(\gamma_3)$ we depict the rectangle $R(k+1,n+3)$ determining the numerator of $g_{\gamma_3}(y_2)$ in blue
\[
\begin{tikzpicture}
  \def\horscale{3}
  \coordinate (c14) at (-0.5*\horscale,3);  \coordinate (c16) at (0.5*\horscale,3);
  \coordinate (c23) at (-1*\horscale,2);  \coordinate (c25) at (0*\horscale,2);  \coordinate (c27) at (1*\horscale,2);
  \coordinate (c32) at (-1.5*\horscale,1);  \coordinate (c34) at (-0.5*\horscale,1);  \coordinate (c36) at (0.5*\horscale,1);  \coordinate (c38) at (1.5*\horscale,1);
  \coordinate (c41) at (-2*\horscale,0);  \coordinate (c43) at (-1*\horscale,0);  \coordinate (c45) at (0*\horscale,0);  \coordinate (c47) at (1*\horscale,0);  \coordinate (c49) at (2*\horscale,0);

  \node at (c14) {$q_{31}$};   \node[blue] at (c16) {$q_{32}$};
  \node at (c23) {$q_{21}$};   \node[blue] at (c25) {$q_{22}$};   \node[blue] at (c27) {$q_{23}$};
  \node at (c32) {$q_{11}$};   \node[blue] at (c34) {$q_{12}$};   \node[blue] at (c36) {$q_{13}$};   \node at (c38) {$q_{14}$};
  \node at (c41) {$\boxed{1}$};   \node at (c43) {$1$};   \node[blue] at (c45) {$1$};   \node at (c47) {$1$};   \node at (c49) {$\boxed{1}$};

  \node at ($(c14)!0.5!(c34)$) {$u_{25}$};
  \node[blue] at ($(c16)!0.5!(c36)$) {$u_{36}$};
  \node at ($(c23)!0.5!(c43)$) {$u_{24}$};
  \node[blue] at ($(c25)!0.5!(c45)$) {$u_{35}$};
  \node at ($(c27)!0.5!(c47)$) {$u_{46}$};

  \draw[red, ->, shorten >= 0.3cm, shorten <= 0.3cm] (c41) -- node[midway, above left] {$u_{13}$} (c32);
  \draw[red, ->, shorten >= 0.3cm, shorten <= 0.3cm] (c32) -- node[midway, above left] {$u_{14}$} (c23);
  \draw[red, ->, shorten >= 0.3cm, shorten <= 0.3cm] (c23) -- node[midway, above left] {$u_{15}$} (c14);
  \draw[red, ->, shorten >= 0.3cm, shorten <= 0.3cm] (c14) -- node[midway, above right] {$u_{27}$} (c25);
  \draw[red, ->, shorten >= 0.3cm, shorten <= 0.3cm] (c25) -- node[midway, above left] {$u_{16}$} (c16);
  \draw[blue, ->, shorten >= 0.3cm, shorten <= 0.3cm] (c16) -- node[midway, above right] {$u_{37}$} (c27);
  \draw[red, ->, shorten >= 0.3cm, shorten <= 0.3cm] (c27) -- node[midway, above right] {$u_{47}$} (c38);
  \draw[red, ->, shorten >= 0.3cm, shorten <= 0.3cm] (c38) -- node[midway, above right] {$u_{57}$} (c49);

  \draw[->, shorten >= 0.3cm, shorten <= 0.3cm] (c32) -- (c43);
  \draw[->, shorten >= 0.3cm, shorten <= 0.3cm] (c43) -- (c34);
  \draw[->, shorten >= 0.3cm, shorten <= 0.3cm] (c34) -- (c45);
  \draw[->, shorten >= 0.3cm, shorten <= 0.3cm] (c45) -- (c36);
  \draw[->, shorten >= 0.3cm, shorten <= 0.3cm] (c36) -- (c47);
  \draw[->, shorten >= 0.3cm, shorten <= 0.3cm] (c47) -- (c38);
  \draw[->, shorten >= 0.3cm, shorten <= 0.3cm] (c23) -- (c34);
  \draw[->, shorten >= 0.3cm, shorten <= 0.3cm] (c34) -- (c25);
  \draw[->, shorten >= 0.3cm, shorten <= 0.3cm] (c25) -- (c36);
  \draw[->, shorten >= 0.3cm, shorten <= 0.3cm] (c36) -- (c27);
\end{tikzpicture}
\]
From the diagram we read off $g_{\gamma_3}(y_2)=\frac{u_{37}u_{36}u_{35}}{u_{24}u_{14}}$.
\end{Example}

\begin{Proposition}\label{prop:chamber inverse}
For $\gamma$ a Dyck path in $\mathcal D_n$ we have    $f_{\gamma}$ and $g_{\gamma}$ are inverse to each other. 
\end{Proposition}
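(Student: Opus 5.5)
Both rings are Laurent polynomial rings on the same number of generators: the faces and arrows of $\mathcal D_n(\gamma)$ on one side, and $y_1,\dots,y_n$ together with the vertex labels $q_{k,i}$ on the other. By Lemma~\ref{lem:dyck for chords} (and its evident extension to arbitrary $\gamma$), faces plus arrows of $\gamma$ are in bijection with the vertices $q_{k,i}$ of $\mathcal D_n(\gamma)$ plus the $n$ variables $y_k$. Both $f_\gamma$ and $g_\gamma$ send generators to Laurent monomials, so they are monomial ring homomorphisms. Hence it suffices to show $g_\gamma\circ f_\gamma=\mathrm{id}$ on the generators $u_{a,b}$. The composite is then a monomial endomorphism whose exponent matrix is the product of two integer matrices equal to the identity. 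Square integer matrices with product $I$ are mutually inverse, so $f_\gamma\circ g_\gamma=\mathrm{id}$ as well.

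For the check on faces, I would first record a telescoping description of $g_\gamma(q_{k,i})$. Write $R(k,i)$ for the maximal rectangle with rightmost vertex $(k,i)$. Then $g_\gamma(q_{k,i})^{-1}$ is the product of the $u$-labels in $R(k,i)$: its faces together with the upward arrows on its upper-left boundary edge. Now take a face $u_{i+1,i+k+2}$ with top $(k+1,i)$, left $(k,i)$, right $(k,i+1)$ and bottom $(k-1,i+1)$, and compute
\[
g_\gamma\!\left(\frac{q_{k+1,i}\,q_{k-1,i+1}}{q_{k,i}\,q_{k,i+1}}\right).
\]
The key combinatorial fact is an inclusion–exclusion of rectangles:
\[
R(k+1,i)\cup R(k-1,i+1)=R(k,i)\cup R(k,i+1)\cup\{\text{the face }u_{i+1,i+k+2}\},
\]
with matching intersections. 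This is the standard identity behind the chamber Ansatz in \cite{parametrizations}, and it makes every label except the given face cancel. The sign is right because the images are inverses of products, while the face sits in the numerator rectangles. I would prove the identity by coordinatizing a rectangle by the lattice segment of bottom vertices $(0,\cdot)$ it spans together with its height, so that rectangles become products of intervals. Faces adjacent to the boundary need separate treatment: when a vertex label is $1$ its rectangle is empty, and the identity degenerates consistently.

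For arrows, the case $u_{1,k+3}\mapsto q/q'$ along an upward arrow of $\gamma$ is handled the same way: $R(q')$ differs from $R(q)$ exactly by that arrow and by faces that cancel. For a downward arrow, $f_\gamma(u_{k,n+3})=y_{k-1}q/q'$, and $g_\gamma(y_{k-1})$ is built from the rectangles $R(k,n+3)$ and $R(1,k-1)$. I expect this to be the main obstacle. It requires checking that $R(k,n+3)$ equals $R(q')$ with the downward arrow adjoined, modulo the rectangle $R(q)$ and the correction $R(1,k-1)$. Shifts by one in the indexing conventions make this bookkeeping delicate, especially where $\gamma$ turns, so I would first verify it on the example $\mathcal D_4(\gamma_3)$ to pin down the conventions. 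For general $\gamma$ I would avoid induction and argue directly with the rectangle description, which only depends on the local shape of $\gamma$ near the arrow.
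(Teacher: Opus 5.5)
Your route differs from the paper's. The paper checks $f_\gamma\circ g_\gamma=\mathrm{id}$ on the $q_{k,i}$ and $y_k$ by telescoping $f_\gamma$ over a region. You check $g_\gamma\circ f_\gamma=\mathrm{id}$ on the $u$'s and conclude by equal rank. The rank argument is correct, and it is exactly what turns the paper's one-sided check into a two-sided inverse. The count does not come from Lemma~\ref{lem:dyck for chords}, though. It comes directly: each face corresponds to its top vertex, which has height $\ge 2$, so the $n$ height-one vertices plus $y_1,\dots,y_n$ match the $2n$ arrows of $\gamma$.

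The real problem is that your key identity has the face on the wrong side. The face $u_{i+1,i+k+2}$ lies only in the region of its \emph{right} vertex $q_{k,i+1}$. The correct multiset identity is
\[
R(k,i)\sqcup R(k,i+1)=R(k+1,i)\sqcup R(k-1,i+1)\sqcup\{u_{i+1,i+k+2}\}.
\]
This gives $g_\gamma(f_\gamma(u))=u$, because $q_{k,i+1}$ sits in the denominator of $f_\gamma(u)$ and $g_\gamma$ inverts. In your version the face lies in the rectangles of the numerator $q_{k+1,i}q_{k-1,i+1}$, so the same inversion produces $u^{-1}$. Your remark that the sign is right is backwards.

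Two further problems block the plan as written.

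First, for $\gamma\neq\gamma_n$, literal maximal rectangles satisfy no such identity. In $\mathcal D_4(\gamma_3)$ the maximal rectangle at $q_{23}$ gives $\{u_{35},u_{36},u_{16}\}$, hence $f_\gamma(g_\gamma(q_{23}))=q_{12}q_{23}/q_{22}$. At the face $u_{36}$ the two sides of the identity then differ by $\{u_{24},u_{14}\}$. The region attached to $q_{k,i}$ must instead be the part of the strip between the up-left rays from $(0,i)$ and from $(k,i)$ that lies below $\gamma$, together with the up-arrows of $\gamma$ in that strip. For $q_{23}$ this is $u_{25},u_{35},u_{36},u_{15},u_{16}$. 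These regions are staircases, not products of intervals, so your coordinatization fails and the identity must be checked column by column along $\gamma$. The paper's telescoping has the same blind spot: it tacitly assumes the upper-left side of each rectangle lies on $\gamma$.

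Second, you stop exactly at the downward arrows. With the formula you quote, $f_\gamma(u_{k,n+3})=y_{k-1}q/q'$ with $q$ the source, the check must fail. Already for $n=2$ one gets $g_\gamma(f_\gamma(u_{25}))=u_{24}^2u_{25}/u_{13}^2$. Both the paper's telescoping for $y_k$ and the $u$-equations require $f_\gamma(u_{k,n+3})=y_{k-1}q'/q$. With that correction, what you need is
\[
R(k,n+3)\sqcup R(q)=R(q_{1,k-1})\sqcup R(q')\sqcup\{u_{k,n+3}\},
\]
where $R(k,n+3)$ is the paper's rectangle attached to the down arrow (its faces and $u_{k,n+3}$ itself). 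This identity does hold.
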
\label{lem:gfq}

\begin{proof}
We show that (1) $f_\gamma\circ g_\gamma (q_{k,i})=q_{k,i}$ and (2) $f_\gamma\circ g_\gamma (y_k)=y_k$.

\medskip
\noindent
(1) Notice that $f_{\gamma}$ sends the product of two $u$-variables corresponding to adjacent faces, say $u_{i+1,i+k+2}$ and $u_{i+1,i+k+3}$,  to the ratio of the polynomials at the corners of the rectangle formed by the two faces (top times bottom divided by right times left). Pictorially, we have
\[
\begin{tikzpicture}[baseline=(current bounding box.center)]
  \def\horscale{3}
  \coordinate (c11) at (0,3);
  \coordinate (c21) at (-0.5*\horscale,2);  \coordinate (c22) at (0.5*\horscale,2);
  \coordinate (c31) at (0,1);  \coordinate (c32) at (1*\horscale,1);
  \coordinate (c41) at (0.5*\horscale,0);

  \node at (c11) {$b$};
  \node at (c21) {$a$};   \node at (c22) {$c$};
  \node at (c31) {$d$};   \node at (c32) {$e$};
  \node at (c41) {$g$};

  \draw[->, shorten >= 0.3cm, shorten <= 0.3cm] (c21) -- (c11);
  \draw[->, shorten >= 0.3cm, shorten <= 0.3cm] (c11) -- (c22);
  \draw[->, shorten >= 0.3cm, shorten <= 0.3cm] (c21) -- (c31);
  \draw[->, shorten >= 0.3cm, shorten <= 0.3cm] (c31) -- (c22);
  \draw[->, shorten >= 0.3cm, shorten <= 0.3cm] (c22) -- (c32);
  \draw[->, shorten >= 0.3cm, shorten <= 0.3cm] (c31) -- (c41);
  \draw[->, shorten >= 0.3cm, shorten <= 0.3cm] (c41) -- (c32);

  \node at ($(c11)!0.5!(c31)$) {$u$};
  \node at ($(c22)!0.5!(c41)$) {$u'$};
\end{tikzpicture}
\quad \quad \Rightarrow\quad  f_{\gamma}(uu')=\frac{bd}{ac}\frac{cg}{de}=\frac{bg}{ae}.
\]      
Hence, the product of all (inverses of) face variables inside the rectangle corresponding to the image $g_{\gamma}(q_{k,i})$ is $\frac{q_{k,i}q'}{q''}$ where $q'$ corresponds to the bottom vertex and $q''$ the top vertex of the rectangle.
    
Similarly, the product of $u$-variables labeling a chain of upward arrows has image the bottom vertex divided by the top vertex.
In particular, this cancels $\frac{q'}{q''}$ from the image of $g_{\gamma}(q_{k,i})$, so that $f_\gamma(g_{\gamma}(q_{k,i}))=q_{k,i}$.

\noindent
(2) We use the same argument as above to see that 
\[\textstyle
f_{\gamma}\left({\prod_{u_{a,b}\in R(k+1,n+3)} u_{a,b}}\right) = \frac{y_k}{q_{1,k}}.
\]
The rectangle in the denominator corresponds to the image of $q_{1,k}^{-1}$ so we have $f_{\gamma}\left( \prod_{u_{a,b}\in R(1,k)} u_{a,b} \right)=q_{1,k}^{-1}$ and the claim follows.
\end{proof}

\begin{Remark}
    In the special case of the Dyck paths $\gamma_d$ in $\mathcal D_n$ we adopt the notation $S_{\gamma_d}^{\pm}=S_{d,n}^{\pm}$, $R_{\gamma_d}^{\pm}=R^{\pm}_{d,n}$ and $f_{\gamma_d}=f_{d,n}$,  $g_{\gamma_d}=g_{d,n}$.
\end{Remark}

We end this section with an example that shows the importance of considering polynomials $q_{k,i}$ associated to a Dyck path, as otherwise the resulting normal fans may not be simplicial.

\begin{Example}
For $n=3$ take the polynomial $f=q_{3,1}\prod_{j=1}^3q_{1,j}$, so
\[
f=(1+y_1)(1+y_2)(1+y_3)(1+y_1+y_1y_2+y_1y_2y_3)
\]
The corresponding Newton polytope has 12 vertices and its normal fan has nine rays generated by
\[
e_1,e_2,e_3,-e_1,-e_2,-e_3,e_1-e_2,e_1-e_3,e_2-e_3.
\]
There are two maximal cones, corresponding to the vertices $(0,0,1)$ and $(1,2,2)$ that have four rays each:
\[
\cone(e_1,e_2,e_1-e_3,e_2-e_3) \quad \text{and} \quad \cone(-e_2,-e_3,e_1-e_2,e_1-e_3).
\]
In particular, $\Sigma\left(\operatorname{Newt}(f)\right)$ is not smooth.
\end{Example}

\section{Application: binary geometries}\label{sec:binary}

The polytopes $\mathcal P_{d,n}$ define binary geometries in the known cases of $d=1,2,n$.
It is therefore natural to ask whether this is true in general.
As we have seen, our polytopes are within the class of polytopes associated to Dyck paths studied by Calvo Cortes and Frost in \cite{CalvoCortesFrost2026}.
In \emph{loc.cit.} it was shown that these polytopes satisfy many requirements of binary geometries.

Given a flag simplicial complex $\Delta$ on $[N]$ we write $i\not\sim j$ for $i,j\in \Delta$ if $\{i,j\}\not \in \Delta$.
We associate to each $i\in[N]$ a polynomial in $\mathbb C[u_1,\dots,u_N]$ determining a {\bf $u$-equation} 
\begin{eqnarray}\label{eq:u-equations}
    R_i=u_i+\prod_{j\not\sim i}u_j^{a_{ij}}-1=0
\end{eqnarray}
for some integers $a_{ij}>0$.
We are interested in the $u$-equations obtained from $\Sigma_{d,n}$. 
By Theorem~\ref{thm:chord fan}, the rays of $\Sigma_{d,n}$ are labelled by the chords in $\mathcal Ch_d$ as in  \eqref{eq:def d-chords}.
In particular, the $u$-equations take the form:
\begin{eqnarray}\label{eq:u-eq pell catalan}
    R_{ij}=u_{ij}+\prod_{\overline{pq}\not\sim_d \overline{ij}}u_{{pq}}-1=0
\end{eqnarray}
for every $\overline{ij}\in\mathcal Ch_d$. 
Inside the polynomial ring $\mathbb C[u_{ij}:\overline{ij}\in \mathcal Ch_d]$ define the {\bf chord ideal} $\mathcal I_{d,n}$ as the ideal generated by $R_{ij}$ for all $\overline{ij}\in \mathcal Ch_d$. 
Sometimes it will be more convenient to denote the $u$-variables by rays in $\Sigma_{d,n}$ instead of chords using the bijection \eqref{eq:chords and rays}. So, equivalently $\mathcal I_{d,n}\subset \mathbb C[u_\rho:\rho\in\Sigma_{d,n}(1)]$.
Additionally, we consider the Laurent polynomial ring $\mathbb C[u_{ij}^{\pm 1}: {\overline{ij}}\in \mathcal Ch_d]$ with the ideal $\mathcal I_{d,n}^\pm$ generated by the same set as $\mathcal I_{d,n}$.
Recall the notation for simplicial complexes from \S\ref{sec:pre}.

\begin{Definition}\label{def:BinaryGeom}
\cite[Definition 2.1]{Lam::LectureNotes}
    Let $\Delta$ be a flag simplicial complex on $[N]$. A {\bf binary geometry} $\widetilde{U}$ for $\Delta$ is an  affine algebraic variety $\tilde{U}=\tilde{U}(\Delta) \subset \mathbb{C}^N$ cut out by $N$ equations of the form~\eqref{eq:u-equations},
satisfying the following properties:
\begin{enumerate}
    \item $\dim \widetilde{U} =  \max_{S \in \Delta} |S|$ 
    \item for $S \subset [N]$ the subvariety $\widetilde{U}_S = \widetilde{U} \cap \{ u \in \mathbb{C}^N : u_i = 0 \; \forall i \in S\}$ is non-empty if and only if $S \in \Delta$;
    \item if $\widetilde{U}_S$ is non-empty, it is irreducible of codimension $|S|$ in $\widetilde{U}$.
\end{enumerate} 
 {If $a_{ij}\in \{0,1\}$ for all $i$ and $j$ the binary geometry is called {\bf perfect}.}
\end{Definition}
Binary geometries of type $\mathtt A$ cluster algebras are perfect. 
Grassmannian cluster algebras give rise to non-perfect binary geometries that have $a_{ij}>1$ for some $i,j$.

Consider the ideal $J_{d,n}^\pm\subset S_{d,n}^\pm$ generated by $q_{k,j} - (1+y_j+y_jy_{j+1}+\dots+y_{j}y_{j+1}\cdots y_{j+k-1})$ for all $k\in[d]$ and $j\in[n-k+1]$.
The composition of $f_{d,n}$ defined in \eqref{eq:f gamma} with the quotient map $\pi: S_{d,n}^\pm\to S_{d,n}^\pm/J_{d,n}^\pm$ is denoted by
\begin{eqnarray}\label{eq:poly map}
    \tilde f_{d,n}: \mathbb C[u_\rho^{\pm 1}:\rho\in \Sigma_{d,n}(1)] \to S_{d,n}^\pm/J_{d,n}^\pm, \quad u_{\rho} \mapsto \pi\left(f_{d,n}(u_\rho)\right).
\end{eqnarray}
By \cite[Theorem 4.2]{CalvoCortesFrost2026} the inverse image of $J_{d,n}^{\pm}$ is the chord ideal $\mathcal I_{d,n}^\pm$, that is:
\[
f_{d,n}^{-1}(J^{\pm}_{d,n})=\mathcal I_{d,n}^{\pm}=\ker(\tilde f_{d,n}).
\]
Hence, the isomorphism from Proposition~\ref{prop:chamber inverse} descends, and the map $\tilde f_{d,n}$ induces an isomorphism 
\begin{eqnarray}\label{eq:iso quasiaffine}
    \mathbb C[u_\rho^{\pm 1}:\rho\in\Sigma_{d,n}(1)]/\mathcal I_{d,n}^{\pm}\cong S_{d,n}^\pm/J_{d,n}^{\pm}.
\end{eqnarray}
Moreover, also by \cite[Theorem 4.2]{CalvoCortesFrost2026},
the isomorphism descends to polynomial rings
\begin{eqnarray}\label{eq:isom quot}
\mathbb C[u_\rho:\rho\in\Sigma_{d,n}(1)]/\mathcal I_{d,n}\cong S_{d,n}/J_{d,n}.
\end{eqnarray}

One last ingredient is the following:
\begin{Lemma}[Lemma 2.5 in \cite{Pellytopes}]\label{lem:link}
    Let $\Delta$ be a pure flag complex on $[n]$, and let  $\widetilde{U}$ be an affine algebraic variety $\widetilde{U} \subset \mathbb{C}^n$ cut out by $n$ equations of the form as in \eqref{eq:u-equations}.
For $S \in \Delta $, the subvariety $\widetilde{U}_S = \widetilde{U} \cap \{ u \in \mathbb{C}^n \mid u_k = 0 : k \in S\}$ is cut out by the equations:
\begin{align*}
    u_k &= 0 \quad \text{for } k  \in S, \qquad u_j = 1 \quad \text{for } j \notin W_S,\\
R_i &= \; u_i\; + \prod_{\ell \in W_S\colon \; \ell \nsim  i} u_\ell^{a_{i\ell}} - 1 = 0  \quad  \text{ for } i \in W_S.
\end{align*}
Moreover, the following hold: (1) If $\widetilde{U} $ is a binary geometry and $S \in \Delta$, then $\widetilde{U}_S$ is a binary geometry with underlying simplicial complex $\link_\Delta S$. (2) If $\widetilde{U}$ is an irreducible variety of dimension $\max_{S \in \Delta} \# S$, and $\widetilde{U}_{\{k\}}$ is a binary geometry with underlying simplicial complex $\link_\Delta\{k\}$ for each $k \in [n]$, then $\widetilde{U} $ is a binary geometry.
\end{Lemma}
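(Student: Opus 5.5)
The statement to prove is Lemma~\ref{lem:link}, quoted from \cite{Pellytopes}. I would prove the explicit description of $\widetilde U_S$ first, and then derive (1) and (2) from it.

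\textbf{The equations of $\widetilde U_S$.} Set $u_k=0$ for all $k\in S$ and go through the $u$-equations one at a time.
\begin{itemize}
\item Take $j\notin S\cup W_S$. Then $S\cup\{j\}\notin\Delta$. Because $\Delta$ is flag, there is some $k\in S$ with $k\nsim j$. So the monomial in $R_j$ contains $u_k^{a_{jk}}$ with $a_{jk}>0$, and it vanishes. Hence $R_j=0$ reduces to $u_j=1$.
\item Take $k\in S$. The equation $R_k=0$ reads $0+\prod_{\ell\nsim k}u_\ell^{a_{k\ell}}=1$. Since $S\in\Delta$, every $\ell\nsim k$ lies outside $S$. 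Such an $\ell$ also lies outside $W_S$, since $\ell\in W_S$ would force $\{\ell,k\}\in\Delta$. So every variable in this product is already set to $1$, and $R_k=0$ holds automatically.
\item Take $i\in W_S$. No $k\in S$ is incompatible with $i$. Substituting $u_j=1$ for $j\notin W_S$ leaves exactly the stated equation, whose product runs over $\ell\in W_S$ with $\ell\nsim i$.
\end{itemize}
This gives the claimed system.

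\textbf{Part (1).} Identify $\widetilde U_S$ with a subvariety of $\mathbb C^{W_S}$. It is cut out by $|W_S|$ equations of the form \eqref{eq:u-equations}, and their incompatibility relation is the restriction of that of $\Delta$. This restriction is the incompatibility relation of $\link_\Delta S$, which is flag and has vertex set $W_S$.

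For $T\subset W_S$ we have $(\widetilde U_S)_T=\widetilde U_{S\cup T}$. This set is nonempty exactly when $S\cup T\in\Delta$, which is exactly when $T\in\link_\Delta S$. When it is nonempty, it is irreducible of codimension $|S\cup T|-|S|=|T|$ in $\widetilde U_S$.

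For the dimension, purity of $\Delta$ gives $\max_{T\in\link S}|T|=\max_{S'\in\Delta}|S'|-|S|$. Since $\widetilde U_S$ has codimension $|S|$ in $\widetilde U$, its dimension is this maximum.

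\textbf{Part (2).} Condition (i) of Definition~\ref{def:BinaryGeom} holds by hypothesis, so only (ii) and (iii) need checking.

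\emph{Empty strata.} Let $S\notin\Delta$. Because $\Delta$ is flag, $S$ contains two incompatible elements $i\nsim j$. From $u_i=0$, the equation $R_i=0$ forces $\prod_{\ell\nsim i}u_\ell^{a_{i\ell}}=1$, and this product contains $u_j$ with a positive exponent. So $u_j\neq0$, and $\widetilde U_S=\emptyset$.

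\emph{Nonempty strata.} Let $S\in\Delta$ be nonempty and pick $k\in S$. Then $\widetilde U_S=(\widetilde U_{\{k\}})_{S\setminus\{k\}}$, and $S\setminus\{k\}\in\link_\Delta\{k\}$. Since $\widetilde U_{\{k\}}$ is a binary geometry for $\link_\Delta\{k\}$, this stratum is nonempty and irreducible of codimension $|S|-1$ in $\widetilde U_{\{k\}}$.

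It remains to see that $\widetilde U_{\{k\}}$ has codimension $1$ in $\widetilde U$. Its dimension is $\max_{T\in\link_\Delta\{k\}}|T|$, which equals $\max_{S'\in\Delta}|S'|-1$ by purity. This is $\dim\widetilde U-1$ because $\widetilde U$ is irreducible of the stated dimension. The case $S=\emptyset$ is the irreducibility hypothesis itself.

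\textbf{Main obstacle.} The delicate step is the first computation: that the equations indexed by $S$ and by the complement of $S\cup W_S$ become trivial or force $u_j=1$. This is where flagness and the positivity $a_{ij}>0$ are essential. The rest is bookkeeping of dimensions using purity.
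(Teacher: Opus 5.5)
Your proof is correct. The paper gives no argument of its own for this lemma; it imports it from \cite{Pellytopes}, so there is no internal proof to compare against. What you wrote is a complete, self-contained proof along the natural lines:
\begin{enumerate}
\item substitute $u_k=0$ for $k\in S$;
\item use flagness and $a_{ij}>0$ to force $u_j=1$ for $j\notin S\cup W_S$ and to make $R_k$ redundant for $k\in S$;
\item identify the strata of $\widetilde U_S$ with the strata $\widetilde U_{S\cup T}$ of $\widetilde U$, and use purity for the dimension count;
\item in part (2), reduce every nonempty stratum to a stratum of some $\widetilde U_{\{k\}}$.
\end{enumerate}

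Two small points are worth stating explicitly.

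First, the condition ``$u_j=1$ for $j\notin W_S$'' in the statement only makes sense for $j\notin S\cup W_S$. You read it that way, which is correct.

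Second, in part (1) you assert without proof that restricting the incompatibility relation of $\Delta$ to $W_S$ gives the incompatibility relation of $\link_\Delta S$, and that this link is flag. Both facts need flagness of $\Delta$. For $i,\ell\in W_S$ one has $\{i,\ell\}\in\link_\Delta S$ if and only if $S\cup\{i,\ell\}\in\Delta$. All pairs in $S\cup\{i,\ell\}$ involving $S$ are already faces, so by flagness this holds if and only if $\{i,\ell\}\in\Delta$. The same argument shows that any pairwise compatible $T\subset W_S$ satisfies $S\cup T\in\Delta$, so $\link_\Delta S$ is flag. This step is what guarantees that the restricted equations are $u$-equations for $\link_\Delta S$ and not for some other complex on $W_S$, so it deserves a line in the write-up.
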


These are all the necessary ingredients to verify that each Catalan pellytope defines a perfect binary geometry:

\begin{Proposition}\label{prop:binary}
The chord diagram fans $\Sigma_{d,n}$ define perfect binary geometries $\tilde{ \mathcal{U}}_{d,n}$ for $1\le d\le n$.
\end{Proposition}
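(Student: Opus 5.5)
We argue by induction on $n$, using Lemma~\ref{lem:link}(2) together with the link decomposition of Proposition~\ref{prop:link}. Perfection is immediate: the $u$-equations \eqref{eq:u-eq pell catalan} have all exponents $a_{ij}\in\{0,1\}$ by definition. So it remains to prove that the variety $\tilde{\mathcal U}_{d,n}=V(\mathcal I_{d,n})\subset\mathbb C^{N}$, with $N=|\mathcal Ch_d|$, is a binary geometry for the flag complex $\Delta_{d,n}$ underlying $\Sigma_{d,n}$. By Corollary~\ref{cor:smooth flag} this complex is flag, and it is pure of dimension $n$ by Proposition~\ref{prop:admissible}(1).

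\textbf{Irreducibility and dimension.} By \eqref{eq:isom quot}, the coordinate ring $\mathbb C[u_\rho]/\mathcal I_{d,n}$ is isomorphic to $S_{d,n}/J_{d,n}$. The generators of $J_{d,n}$ express each $q_{k,j}$ as a polynomial in $y_1,\dots,y_n$. Hence $S_{d,n}/J_{d,n}\cong\mathbb C[y_1,\dots,y_n]$, which is an integral domain of Krull dimension $n$. Therefore $\tilde{\mathcal U}_{d,n}\cong\mathbb C^n$ is irreducible of dimension $n=\max_{S\in\Delta_{d,n}}|S|$. This is exactly the first hypothesis of Lemma~\ref{lem:link}(2).

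\textbf{Links.} For each chord $\overline{ij}\in\mathcal Ch_d$, Lemma~\ref{lem:link} shows that $\tilde{\mathcal U}_{\{\overline{ij}\}}$ is cut out as follows:
\begin{itemize}
\item the coordinates of chords incompatible with $\overline{ij}$ are set to $1$;
\item $u_{ij}$ is set to $0$;
\item the remaining variables satisfy the reduced $u$-equations indexed by $W_{\{\overline{ij}\}}$.
\end{itemize}
By Proposition~\ref{prop:link}, the link is $\Sigma_{d_1,n_1}\times\Sigma_{d_2,n_2}$, with $n_1,n_2<n$.

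The key step is to check that the reduced equations match the chord equations of the two subpolygons, under the identification of chords used in the proof of Proposition~\ref{prop:link}. For a chord $\overline{pq}$ on side $1$, every chord on side $2$ is compatible with it. So the reduced product $\prod_{\ell\in W,\ell\nsim pq}u_\ell$ involves only side-$1$ variables, namely those incompatible with $\overline{pq}$ in the $(d_1,n_1)$-chord structure. It follows that
\[
\tilde{\mathcal U}_{\{\overline{ij}\}}\cong\tilde{\mathcal U}_{d_1,n_1}\times\tilde{\mathcal U}_{d_2,n_2}.
\]
By induction each factor is a binary geometry. The factors with $(d_k,n_k)=(0,0)$ are points, and the factors with $d_k=n_k$ are type $\mathtt A$ cluster configuration spaces, which are known binary geometries. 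A product of binary geometries is a binary geometry for the join (product) complex: strata are products of strata, irreducibility is preserved, and codimensions add. Lemma~\ref{lem:link}(2) then gives the claim. The base cases are $n=1$, where $\tilde{\mathcal U}$ is $u+u'=1$, and more generally $d=1,2,n$, which are known from \cite{Pellytopes,AHL::Stringy}.

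\textbf{Main obstacle.} The hardest step is the matching of reduced equations in the cases $i=1$ and $j=n+3$ of Proposition~\ref{prop:link}. There, crossing chords $\overline{i'(n+3)}$ lying in the side of $\overline{1j}$ are relabelled as $\overline{i'j}$. One must verify, using Lemma~\ref{lem:ray compatibility}, that $d$-incompatibility in the $(n+3)$-gon restricted to $W_{\{\overline{ij}\}}$ coincides exactly with $d_k$-incompatibility in the subpolygon after this relabelling. In particular, one has to check that the allowed crossings $\overline{1j'},\overline{i'(n+3)}$ with $j'-i'\ge d+1$ correspond to compatible pairs on the correct side. I would first treat the interior case $1<i<j<n+3$, which is clean, and then handle the boundary cases by explicit inspection of the relabelling.
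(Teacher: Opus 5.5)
Your overall architecture matches the paper's: induction on $n$, the criterion of Lemma~\ref{lem:link}(2), the splitting $\tilde U_{\rho}\cong\tilde{\mathcal U}_{d_1,n_1}\times\tilde{\mathcal U}_{d_2,n_2}$ obtained from Proposition~\ref{prop:link}, and perfection read off from the exponents.

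\textbf{The gap.} It is in the step meant to supply the first hypothesis of Lemma~\ref{lem:link}(2). You read \eqref{eq:isom quot} as $\mathbb C[u_\rho]/\mathcal I_{d,n}\cong\mathbb C[y_1,\dots,y_n]$ and conclude $\tilde{\mathcal U}_{d,n}\cong\mathbb C^n$. This is false.

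The identification comes from the chamber Ansatz, under which the $u$-variables become genuinely rational functions of the $y_i$. For $d=n=2$ the face variable $u_{24}$ goes to $q_{2,1}/(q_{1,1}q_{1,2})=(1+y_1+y_1y_2)/\bigl((1+y_1)(1+y_2)\bigr)$. Conversely, each $y_k$ is a Laurent monomial in the $u$'s (via $g_{d,n}$), hence not a regular function on $\tilde{\mathcal U}_{d,n}$.

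There is no abstract isomorphism either. $\tilde{\mathcal U}_{2,2}$ is the pentagon geometry $\tilde{\mathcal M}_{0,5}$. Its stratification consists of the open part $\mathcal M_{0,5}$ with $\chi=2$, five strata $\cong\mathbb C\setminus\{0,1\}$, and five points. This gives $\chi(\tilde{\mathcal U}_{2,2})=2-5+5=2\neq 1=\chi(\mathbb C^2)$. Even for $d=1$, where $\tilde{\mathcal U}_{1,n}\cong\mathbb C^n$ does hold, the coordinates are the $u_{-e_i}$, not the $y_i$.

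\textbf{What is actually available.} The Laurent statement \eqref{eq:iso quasiaffine} says the torus part $\tilde{\mathcal U}_{d,n}\cap(\mathbb C^*)^N$ is isomorphic to $\mathcal U_{d,n}=\{y\in(\mathbb C^*)^n: q_{k,i}(y)\neq 0\}$, which is irreducible of dimension $n$. To transfer this to $\tilde{\mathcal U}_{d,n}$ you must also know that $\tilde{\mathcal U}_{d,n}$ is the Zariski closure of its torus part. Equivalently, $\mathcal I_{d,n}=\mathcal I_{d,n}^{\pm}\cap\mathbb C[u_\rho:\rho\in\Sigma_{d,n}(1)]$ is prime, so no component of $V(\mathcal I_{d,n})$ lies inside a coordinate hyperplane. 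This is not automatic; it is exactly the input the paper takes from \cite[Theorem 4.2]{CalvoCortesFrost2026}.

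Granting it, $\mathbb C[u_\rho]/\mathcal I_{d,n}$ is the subalgebra of $\mathbb C[y_1^{\pm1},\dots,y_n^{\pm1}][q_{k,i}^{-1}]$ generated by the $\tilde f_{d,n}(u_\rho)$. This is a domain with fraction field $\mathbb C(y_1,\dots,y_n)$, hence of dimension $n$, and the rest of your induction goes through as written.

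\textbf{Your ``main obstacle''.} This step needs no separate verification. Since the complex is flag, the link of a vertex is the induced subcomplex on $W_{\{\overline{ij}\}}$. So the isomorphism of complexes in Proposition~\ref{prop:link} already identifies incompatibility within $W_{\{\overline{ij}\}}$ with that of $\Sigma_{d_1,n_1}\times\Sigma_{d_2,n_2}$, and the reduced equations match automatically.
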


\begin{proof}
By Corollary~\ref{cor:smooth flag} the fan $\Sigma_{d,n}$ defines a flag simplicial complex. 
Let $\tilde{ \mathcal{U}}_{d,n}$ be the affine variety defined by the $u$-equations in \eqref{eq:u-eq pell catalan}.
We need to verify (1), (2), and (3) from Definition~\ref{def:BinaryGeom}.

For (1), consider the quasi-affine variety $\mathcal U_{d,n}\subset (\mathbb C^*)^n$ defined by
\begin{align*}
\mathcal U_{d,n} := \left\{ x \in (\mathbb{C}^*)^{n} : q_{k,i}(x) \neq 0 \; \forall k\in [d],i\in[n-k+1] \right\}.
\end{align*}
Notice that it is of dimension $n$ and by \eqref{eq:iso quasiaffine} isomorphic to $\tilde{\mathcal{U}}_{d,n}\cap (\mathbb C^*)^N$, where $N=|\Sigma_{d,n}(1)|=n+\sum_{j=n-d+1}^n j$, by Proposition~\ref{cor:rays}. 
As $\tilde{\mathcal{U}}_{d,n}$ is the affine closure of the latter, it has the correct dimension.
Moreover, the ideal of $\tilde{\mathcal{U}}_{d,n}$, denoted $\mathcal{I}_{d,n}$ above, is the kernel of $\tilde f_{d,n}$ and therefore prime. 
Hence, $\tilde{\mathcal{U}}_{d,n}$ is irreducible.
It follows from \cite[Proof of Theorem 4.2]{CalvoCortesFrost2026} that $\mathcal I_{d,n} $ is generated by the $u$-equations.
    
To verify (2) and (3) hold we proceed by induction on $n$.
The base case is given by $n\le 3$: for $\Sigma_{d,n}$ with $d\le n\le 3$ the result is known by \cite{AHL::Stringy,Pellytopes}.
Now assume $n\ge 4$ and fix a ray $\rho\in\Sigma_{d,n}(1)$, and consider $\tilde U_{\rho} = \tilde{\mathcal U}_{d,n}\cap\{u_\rho=0\}$.
Let $W_\rho= \{
\eta\in\Sigma_{d,n}(1)\setminus\{\rho\}: \eta\sim_d\rho\}$
be the vertex set of $\link_{\Sigma_{d,n}}(\rho)$.
By Lemma~\ref{lem:link}, the equations defining $\widetilde U_\rho$ are
equivalent to
$u_\rho=0$, $u_\eta=1$ for $\eta\notin W_\rho\cup\{\rho\}$, together with
\[
u_\eta+ \prod_{\lambda\in W_\rho,\lambda\not\sim_d\eta} u_\lambda =1,
\quad \text{ for } \eta\in W_\rho.
\]
These are precisely the $u$-equations associated with the link
$\operatorname{link}_{\Sigma_{d,n}}(\rho)$. Hence $\tilde U_\rho
\cong
\tilde U(
\link_{\Sigma_{d,n}}(\rho)
)$.
By Proposition~\ref{prop:link}, there are integers $n_1,n_2\geq0$ with $n_1+n_2=n-1$ and for each $i\in\{1,2\}$ either $(d_i,n_i)=(0,0)$ or $d_i\in[n_i]$ such that
\[
\link_{\Sigma_{d,n}}(\rho) \cong \Sigma_{d_1,n_1}\times \Sigma_{d_2,n_2}.
\]
Rays belonging to different factors of this fan are compatible.
Therefore, the $u$-equations split into two independent systems, and
we obtain
\[
\tilde U_\rho \cong \tilde{\mathcal U}_{d_1,n_1}
\times
\tilde{\mathcal U}_{d_2,n_2},
\]
where $\tilde{\mathcal U}_{0,0}$ is a point.
Since $n_1,n_2<n$, both factors are binary geometries by induction. Their product is therefore a binary geometry for $\link_{\Sigma_{d,n}}(\rho)$.
In particular, $\tilde U_\rho$ is nonempty, irreducible, and of dimension 
$n-1$. Hence, by Lemma~\ref{lem:link}(2) $\tilde{\mathcal U}_{d,n}$ is a binary geometry.

The binary geometry is perfect as all exponents in \eqref{eq:u-eq pell catalan} are either $0$ or $1$.
\end{proof}

\begin{Remark}
Our setup is closely related to \cite{Telen_positive}.
Observe that the $q_{k,i}$'s only have positive coefficients, so the positive real orthant $\mathbb{R}^n_{>0}$ is a connected component of $\mathcal{U}_{d,n}(\mathbb R)$.
In fact, it follows from \emph{loc.cit.} that the positive part of $\mathcal U_{d,n}$ coincides with the positive part of the toric variety associated to $\mathcal P_{d,n}$.  
\end{Remark}

\bibliographystyle{plain} 
\bibliography{references} 

\end{document}

%% file: pictures.tex
\newcommand{\hexagon}[2][{{}}]{  
\begin{scope}[scale=0.6]
    \def\chords{{
        "1,3", 
        "1,4", 
        "1,5", 
        "1,6", 
        "2,4", 
        "2,5", 
        "2,6", 
        "2,7", 
        "3,5", 
        "3,6", 
        "3,7", 
        "4,6", 
        "4,7", 
        "5,7"  
      }}

    \foreach \vtx in {1,...,6}{
      \coordinate (y\vtx) at (-\vtx*360/6:1) ;
    }

    \coordinate (y7) at (y1);
    
    \foreach \entry in #2{
      \pgfmathsetmacro{\chord}{\chords[\entry-1]}
      \pgfmathsetmacro{\from}{{\chord}[0]]}
      \pgfmathsetmacro{\to}{{\chord}[1]]}

      \draw[] (y\from)     -- (y\to);

    }
    
    \draw (y1)
    \foreach \vtx in {2,...,7}{
      -- (y\vtx)
      
    };
    
    \foreach \entry in #1{
      \pgfmathsetmacro{\chord}{\chords[\entry-1]}
      \pgfmathsetmacro{\from}{{\chord}[0]]}
      \pgfmathsetmacro{\to}{{\chord}[1]]}

      \draw[line width = 1mm, yellow, opacity=0.4] (y\from)     -- (y\to);

    }
    
    \draw (y1)
      \foreach \vtx in {2,...,7}{
        -- (y\vtx)

      };

\end{scope}
}

\tikzset{
    edge/.style={
        shorten >=7pt,
        shorten <=7pt
    }
}

\newcommand\Angle{205}

\newcommand\picpone{
  \tikz[scale=0.4,
    x={( {cos(\Angle)*1cm}, {sin(\Angle)*0.5cm} )},
    y={( {-sin(\Angle)*1cm}, {cos(\Angle)*0.5cm} )},
    z={(0cm,0.75cm)} 
    ]{
\coordinate (e1) at (0.0, 0.0, 0.0);
\coordinate (e2) at (0.0, 0.0, 9);
\coordinate (e3) at (9, 0.0, 0.0);
\coordinate (e4) at (0.0, 9, 0.0);
\coordinate (e5) at (9, 9, 0.0);
\coordinate (e6) at (9, 0.0, 9);
\coordinate (e7) at (0.0, 9, 9);
\coordinate (e8) at (9, 9, 9);

\draw[edge,dashed] (e1) -- (e2);
\draw[edge,dashed] (e1) -- (e3);
\draw[edge] (e2) -- (e6);
\draw[edge] (e2) -- (e7);
\draw[edge] (e7) -- (e8);
\draw[edge] (e6) -- (e8);
\draw[edge] (e5) -- (e8);
\draw[edge] (e4) -- (e7);
\draw[edge] (e5) -- (e3);
\draw[edge] (e4) -- (e5);
\draw[edge,dashed] (e1) -- (e4);
\draw[edge] (e3) -- (e6);

    \begin{scope}[shift={(e1)}, x={(1cm,0cm)}, y={(0cm,1cm)}]
      \hexagon[{{}}]{{1,10,12}}
    \end{scope}
    \begin{scope}[shift={(e2)}, x={(1cm,0cm)}, y={(0cm,1cm)}]
      \hexagon[{{}}]{{7,10,12}}
    \end{scope}
    \begin{scope}[shift={(e3)}, x={(1cm,0cm)}, y={(0cm,1cm)}]
      \hexagon[{{}}]{{1,2,12}}
    \end{scope}
    \begin{scope}[shift={(e4)}, x={(1cm,0cm)}, y={(0cm,1cm)}]
      \hexagon[{{}}]{{3,7,10}}
    \end{scope}
    \begin{scope}[shift={(e5)}, x={(1cm,0cm)}, y={(0cm,1cm)}]
      \hexagon[{{}}]{{1,2,3}}
    \end{scope}
    \begin{scope}[shift={(e6)}, x={(1cm,0cm)}, y={(0cm,1cm)}]
      \hexagon[{{}}]{{2,7,12}}
    \end{scope}
    \begin{scope}[shift={(e7)}, x={(1cm,0cm)}, y={(0cm,1cm)}]
      \hexagon[{{}}]{{1,3,9}}
    \end{scope}
    \begin{scope}[shift={(e8)}, x={(1cm,0cm)}, y={(0cm,1cm)}]
      \hexagon[{{}}]{{7,2,3}}
      \end{scope}
      \draw[line width = 4, yellow, opacity=0.4] (e7)     -- (e8);
      \draw[line width = 4, Mulberry, opacity=0.4] (e4)     -- (e7);
      \draw[line width = 4, SkyBlue, opacity=0.4] (e6)     -- (e8);
}
}

\newcommand\picptwo{
  \tikz[scale=0.4,
    x={( {cos(\Angle)*1cm}, {sin(\Angle)*0.5cm} )},
    y={( {-sin(\Angle)*1cm}, {cos(\Angle)*0.5cm} )},
    z={(0cm,0.75cm)} 
    ]{

\coordinate (e1) at (0.0, 0.0, 0.0);
\coordinate (e2) at (0.0, 0.0, 9);
\coordinate (f1) at (0.0, 6, 0.0);
\coordinate (f2) at (0.0, 6, 9);
\coordinate (f3) at (3.5, 9, 0.0);
\coordinate (f4) at (3.5, 9, 9);
\coordinate (f5) at (6, 0.0, 9);
\coordinate (f6) at (6, 9, 9);
\coordinate (e3) at (9, 0.0, 0.0);
\coordinate (f7) at (9, 0.0, 6);
\coordinate (e4) at (9, 9, 0.0);
\coordinate (f8) at (9, 9, 6);

\draw[edge] (f2) -- (e2);
\draw[edge] (f1) -- (f2);
\draw[edge, dashed] (e2) -- (e1);
\draw[edge, dashed] (f1) -- (e1);
\draw[edge] (f2) -- (f4);
\draw[edge] (f5) -- (e2);
\draw[edge] (f4) -- (f6);
\draw[edge] (f5) -- (f6);
\draw[edge] (f1) -- (f3);
\draw[edge] (f3) -- (f4);
\draw[edge] (f7) -- (e3);
\draw[edge] (f7) -- (f5);
\draw[edge, dashed] (e3) -- (e1);
\draw[edge] (f3) -- (e4);
\draw[edge] (e4) -- (e3);
\draw[edge] (f8) -- (e4);
\draw[edge] (f8) -- (f6);
\draw[edge] (f7) -- (f8);

    \begin{scope}[shift={(e1)}, x={(1cm,0cm)}, y={(0cm,1cm)}]
      \hexagon[{{}}]{{1,10,12}}
    \end{scope}
    \begin{scope}[shift={(e2)}, x={(1cm,0cm)}, y={(0cm,1cm)}]
      \hexagon[{{}}]{{7,10,12}}
    \end{scope}
    \begin{scope}[shift={(e3)}, x={(1cm,0cm)}, y={(0cm,1cm)}]
      \hexagon[{{}}]{{1,2,12}}
    \end{scope}
    \begin{scope}[shift={(e4)}, x={(1cm,0cm)}, y={(0cm,1cm)}]
      \hexagon[{{}}]{{1,2,3}}
    \end{scope}
    \begin{scope}[shift={(f1)}, x={(1cm,0cm)}, y={(0cm,1cm)}]
      \hexagon[{{}}]{{1,9,10}}
    \end{scope}
    \begin{scope}[shift={(f2)}, x={(1cm,0cm)}, y={(0cm,1cm)}]
      \hexagon[{{}}]{{7,9,10}}
    \end{scope}
    \begin{scope}[shift={(f3)}, x={(1cm,0cm)}, y={(0cm,1cm)}]
      \hexagon[{{}}]{{1,3,9}}
    \end{scope}
    \begin{scope}[shift={(f4)}, x={(1cm,0cm)}, y={(0cm,1cm)}]
      \hexagon[{{}}]{{3,7,9}}
    \end{scope}
    \begin{scope}[shift={(f5)}, x={(1cm,0cm)}, y={(0cm,1cm)}]
      \hexagon[{{}}]{{5,7,12}}
    \end{scope}
    \begin{scope}[shift={(f6)}, x={(1cm,0cm)}, y={(0cm,1cm)}]
      \hexagon[{{}}]{{3,5,7}}
    \end{scope}
    \begin{scope}[shift={(f7)}, x={(1cm,0cm)}, y={(0cm,1cm)}]
      \hexagon[{{}}]{{2,5,12}}
    \end{scope}
    \begin{scope}[shift={(f8)}, x={(1cm,0cm)}, y={(0cm,1cm)}]
      \hexagon[{{}}]{{2,3,5}}
    \end{scope}
    \draw[fill=Mulberry,opacity=0.2] (f1) -- (f2) -- (f4) -- (f3);
\draw[fill=SkyBlue,opacity=0.2] (f5) -- (f6) -- (f8) -- (f7);
\draw[line width = 4, yellow, opacity=0.4] (f4)     -- (f6);
}
}

\newcommand\picpthree{
  \tikz[scale=0.4,
    x={( {cos(\Angle)*1cm}, {sin(\Angle)*0.5cm} )},
    y={( {-sin(\Angle)*1cm}, {cos(\Angle)*0.5cm} )},
    z={(0cm,0.75cm)} 
    ]{

\coordinate (f1)  at (3,  6 , 9);
\coordinate (f2)  at (0 ,  3, 9);
\coordinate (f3)  at (6 ,  6 , 9);
\coordinate (e1)  at (0 ,  0 , 9);
\coordinate (f4)  at (6 ,  0 , 9);
\coordinate (f5)  at (6 ,  9, 6 );
\coordinate (f6)  at (6 ,  9, 0 );
\coordinate (f7)  at (0 ,  3 , 0 );
\coordinate (f8)  at (9,  9, 6 );
\coordinate (e2)  at (0 ,  0 , 0 );
\coordinate (f9)  at (12   ,  0 , 3 );
\coordinate (f10) at (12   ,  9, 3 );
\coordinate (e3)  at (12   ,  0 , 0 );
\coordinate (e4)  at (12   ,  9, 0 );

\draw[edge] (f1) -- (f2);
\draw[edge] (f1) -- (f3);
\draw[edge] (f2) -- (e1);
\draw[edge] (f3) -- (f4);
\draw[edge] (f4) -- (e1);
\draw[edge] (f5) -- (f6);
\draw[edge] (f1) -- (f5);
\draw[edge] (f7) -- (f6);
\draw[edge] (f7) -- (f2);
\draw[edge] (f5) -- (f8);
\draw[edge] (f3) -- (f8);
\draw[edge, dashed] (e1) -- (e2);
\draw[edge,dashed] (f7) -- (e2);
\draw[edge] (f9) -- (f4);
\draw[edge] (f8) -- (f10);
\draw[edge] (f9) -- (f10);
\draw[edge] (f9) -- (e3);
\draw[edge,dashed] (e3) -- (e2);
\draw[edge] (f6) -- (e4);
\draw[edge] (f10) -- (e4);
\draw[edge] (e4) -- (e3);
1 10 12
    \begin{scope}[shift={(e1)}, x={(1cm,0cm)}, y={(0cm,1cm)}]
      \hexagon[{{}}]{{7,10,12}}
    \end{scope}
    \begin{scope}[shift={(e2)}, x={(1cm,0cm)}, y={(0cm,1cm)}]
      \hexagon[{{}}]{{1,10,12}}
    \end{scope}
    \begin{scope}[shift={(e3)}, x={(1cm,0cm)}, y={(0cm,1cm)}]
      \hexagon[{{}}]{{1,2,12}}
    \end{scope}
    \begin{scope}[shift={(e4)}, x={(1cm,0cm)}, y={(0cm,1cm)}]
      \hexagon[{{}}]{{1,2,3}}
    \end{scope}
    \begin{scope}[shift={(f1)}, x={(1cm,0cm)}, y={(0cm,1cm)}]
      \hexagon[{{}}]{{6,7,9}}
    \end{scope}
    \begin{scope}[shift={(f2)}, x={(1cm,0cm)}, y={(0cm,1cm)}]
      \hexagon[{{}}]{{7,9,10}}
    \end{scope}
    \begin{scope}[shift={(f3)}, x={(1cm,0cm)}, y={(0cm,1cm)}]
      \hexagon[{{}}]{{5,6,7}}
    \end{scope}
    \begin{scope}[shift={(f4)}, x={(1cm,0cm)}, y={(0cm,1cm)}]
      \hexagon[{{}}]{{5,7,12}}
    \end{scope}
    \begin{scope}[shift={(f5)}, x={(1cm,0cm)}, y={(0cm,1cm)}]
      \hexagon[{{}}]{{3,6,9}}
    \end{scope}
    \begin{scope}[shift={(f6)}, x={(1cm,0cm)}, y={(0cm,1cm)}]
      \hexagon[{{}}]{{1,3,9}}
    \end{scope}
    \begin{scope}[shift={(f7)}, x={(1cm,0cm)}, y={(0cm,1cm)}]
      \hexagon[{{}}]{{1,9,10}}
    \end{scope}
    \begin{scope}[shift={(f8)}, x={(1cm,0cm)}, y={(0cm,1cm)}]
      \hexagon[{{}}]{{3,5,6}}
    \end{scope}
        \begin{scope}[shift={(f9)}, x={(1cm,0cm)}, y={(0cm,1cm)}]
      \hexagon[{{}}]{{2,5,12}}
    \end{scope}
    \begin{scope}[shift={(f10)}, x={(1cm,0cm)}, y={(0cm,1cm)}]
      \hexagon[{{}}]{{2,3,5}}
    \end{scope}
    \draw[fill=Mulberry,opacity=0.2] (f2) -- (f1) -- (f5) -- (f6) -- (f7);
\draw[fill=SkyBlue,opacity=0.2] (f8) -- (f3) -- (f4) -- (f9) -- (f10);
\draw[fill=yellow,opacity=0.2] (f1) -- (f5) -- (f8) -- (f3);
  }
}
